\theoremstyle{definition} 
\newtheorem{dfn}{Definition}[section]
\newtheorem{thm}[dfn]{Theorem}
\newtheorem{lem}[dfn]{Lemma}
\newtheorem{prop}[dfn]{Proposition}
\newtheorem{cor}[dfn]{Corollary}
\newtheorem{ex}[dfn]{Example}
\newtheorem*{rem}{Remark}
\title{Thom form in equivariant \v{C}ech-de Rham theory}
\author{Ko Fujisawa}
\def\frakg{\mathfrak{g}}
\def\Ad{\mathrm{Ad}}
\def\coloneqq{\mathrel{\mathop:}=}
\def\edc#1#2{\Omega_{G}^{#1}(#2)}
\def\ech#1#2{H_{G}^{#1}(#2)}
\def\simto{\stackrel{\sim}{\rightarrow}}
\def\CdR#1{\Omega_{G}^{#1}(\mathcal{U})}
\def\CdRc#1{H_{G}^{#1}(\mathcal{U})}
\def\rCdR#1#2#3{\Omega_{G}^{#1}(\mathcal{#2},#3)}
\def\Eomega#1{\Omega^{#1}(M,E)}
\def\EGomega#1{\Omega_{G}^{#1}(M,E)}
\def\C{\mathbb{C}}
\begin{document}

\maketitle

\begin{abstract} 
In the present paper, we provide the foundation of a $G$-equivariant \v{C}ech-de Rham theory for a compact Lie group $G$ by using the  Cartan model of equivariant differential forms. 
Our approach is quite elementary without referring to the Mathai-Quillen framework. 
In particular, by a direct computation, 
we give an explicit formula of the $U(l)$-equivariant Thom form of $\C^l$, 
which deforms the classical Bochnor-Martinelli kernel. 
Also we discuss a version of equivariant Riemann-Roch formula. 
\end{abstract}

\section{Introduction}
As well known, the \v{C}ech-de Rham cohomology of a smooth manifold is a hypercohomology joining  
the \v{C}ech complex and the de Rham complex, which 
has been introduced for proving the equivalence between these two cohomology theories 
(cf. Bott-Tu \cite{bott1982differential}). 
Afterwards, Tatsuo Suwa has successfully established the \v{C}ech-de Rham theory  
as a tool for computing and describing explicit formulas at the level of cocycles; 
indeed, it yields several applications such as 
localization formulae of characteristic classes and 
index theorems of vector fields on possibly singular varieties 
(Suwa \cite{suwa1998indices, suwa2016complex}, Brasselet-Seade-Suwa \cite{brasselet2009vector}) 
and also index theorems for fixed points of holomorphic self-maps 
(Abate-Bracci-Tovena \cite{abate2004index}, Bracci-Suwa \cite{bracci2004residues}). 
In the present paper, we provide the foundation of a $G$-equivariant version of the \v{C}ech-de Rham theory 
for a compact Lie group $G$ by combining Suwa's construction with the classical Cartan model of equivariant differential forms.

Of our particular interest is to describe the equivariant characteristic classes and their localization at the level of cocycles in an explicit and constructible way. 
Let $M$ be a $G$-manifold and 
$\pi: E \to M$ a $G$-equivariant complex vector bundle of rank $l$ with the zero section $\Sigma \simeq M$. 
Put $\mathcal{W}=\{W_{0},W_{1}\}$ with $W_{0}=E\setminus \Sigma$ and $W_{1}=E$. 
The equivariant Thom form is simply given as an element of the relative equivariant \v{C}ech-de~Rham complex 
$$(0,\pi^{*}\varepsilon_{eq},-\psi_{eq})\in\rCdR{l}{W}{W_{0}},$$
where $\varepsilon_{eq}$ is the equivariant Euler form and $\psi_{eq}$ is the equivariant angular form 
such that $d_{eq}\psi_{eq}=-\pi^{*}\varepsilon_{eq}$ (Theorem \ref{equivariant Thom form}). 
A main result is an explicit expression of the universal equivariant Thom form 
for the trivial $U(l)$-equivariant bundle $\mathbb{C}^{l}\to \{0\}$, 
 that involves an $\mathfrak{u}(l)^*$-valued differential form 
 whose constant term is just the classical Bochner-Martinelli kernel 
 (Theorem \ref{equivariant Bochner-Martinelli kernel}). The equivariant Thom form of $E$ 
is now obtained from this universal form via the equivariant Chern-Weil map.  In our approach, it may be constructed via the localization of equivariant characteristic classes.
Indeed, the equivariant Thom class $\psi_{eq}^{E}$ is equal to 
the localized equivariant top Chern class with respect to the diagonal section $s_{\Delta}$:
$$\varPsi_{eq}^{E}=c_{\Sigma}^{l}({\pi^{*}E,s_{\Delta}})_{eq}$$
(Theorem \ref{equivariant Thom class}). 
Finally, we establish 
an essential version of equivariant Riemann-Roch theorem (Theorem \ref{RR}): 
$$ch_{\Sigma}^{*}(\lambda_{\pi^{*}E^{*}},s_{\Delta})_{eq}=\varPsi_{eq}^{E}\cdot td^{-1}(\pi^{*}E)_{eq}.$$

The most emphasized point is as follows.  In the theory of Mathai-Quillen \cite{mathai1986superconnections},
 the equivariant Thom form is introduced through the fermionic integral and supersymmetry arguments, 
and in this context, Paradan-Vergne \cite{paradan2007equivariant} 
described equivariant Thom forms for  oriented real vector bundles in several variants of de Rham complex. 
In contrast, our approach is quite elementary and simply minded -- 
basically we use only definite integrals for computations, without using the Mathai-Quillen framework. 
The present paper is the basis for further researches; 
for instance,  it is promising to study $\bar{\partial}$-Thom forms and Atiyah classes in equivariant \v{C}ech-Dolbeault theory in complex holomorphic context; 
also another equivariant \v{C}ech-de Rham theory can be considered  
using the Borel construction via the {\it simplicial method},  instead of using the Cartan model as above, 
that certainly leads to the de Rham theory for differentiable stacks. 
Those will be discussed in somewhere else. 
 
The present paper is organized as follows. 
In Section 1, after reviewing briefly the Cartan model, 
we describe the equivariant \v{C}ech-de Rham complex by following Suwa's construction. 
In Section 2, we then take up the equivariant Chern-Weil theory in our setting. 
In particular, we show that our localized equivariant top Chern form provides 
an explicit formula of the universal $U(l)$-equivariant Thom form. 
Finally, in Section 3, we see that our equivariant Thom form immediately 
leads an equivariant version of the Riemann-Roch theorem for 
the zero locus of a section of a complex vector bundle.

The author would like to thank his supervisor, Toru Ohmoto, for guiding him to this subject and many instructions, 
and is also grateful to Tatsuo Suwa for his interests and his warm encouragement.

\section{Equivariant \v{C}ech-de Rham cohomology}

\subsection{Equivariant de Rham cohomology}
\noindent Let $M$ be a smooth manifold and $G$ a Lie group with Lie algebra $\frakg$. We denote by $(\Omega^{*}(M),d)$ the $\mathbb{C}$-valued de Rham complex of $M$ and by $\mathbb{C}[\frakg]$ the algebra of polynomials on $\frakg$ (which is isomorphic to the symmetric algebra $S(\frakg^{*})$ of $\frakg^{*}$). Suppose that $G$ acts on $M$ smoothly. Then, for each element $X\in\frakg$, we obtain a vector field denoted by $X_M$:
$$X_{M}(m)=\left.\frac{d}{dt}\right|_{t=0}\exp(-tX)\cdot m$$

\noindent And, for $X\in\frakg$, we denote by $\iota_X$ the contraction with respect to $X_M$:
$$\iota_X\coloneqq \iota(X_M):\Omega^{k}(M)\to\Omega^{k-1}(M)$$

If $X_1,...,X_n$ is a basis of $\frakg$, we will let $x^1,...,x^n$ denote the corresponding dual basis. Then we naturally get the left action of $G$ on $\Omega^{*}(M)$ and $\mathbb{C}[\frakg]$ as follows: For $g\in G$,
$$ \omega\mapsto g\cdot\omega\coloneqq L_{g^{-1}}^{*}\omega, \ \ \omega\in\Omega^{*}(M)$$
$$ x^{I}\mapsto g\cdot x^{I}\coloneqq(\Ad^{*}_{g}x)^{I},\ \ x^{I}\in\mathbb{C}[\frakg]$$
where $L_{g^{-1}}^{*}$ is the pull back of a left transformation $L_{g^{-1}}$ and $\Ad^{*}_{g}$ is the coadjoint action of $G$ on $\frakg^{*}$ and $I=(i_1,...,i_n)$ is a multi-index.

\begin{dfn}
$\alpha=\sum_{I}x^{I}\otimes\omega_{I}\in\mathbb{C}[\frakg]\otimes\Omega^{*}(M)$ is called $G$-equivariant differential form, if it satisfies the following condition: For any $g\in G$
$$g\cdot\alpha\coloneqq \sum_{I}g\cdot x^{I}\otimes g\cdot \omega_{I}=\sum_{I}x^{I}\otimes\omega_{I}=\alpha$$
The wedge product of two equivariant forms is defined as the usual wedge product of differential forms. We denoted by $\Omega_{G}^{*}(M)\coloneqq (\mathbb{C}[\frakg]\otimes\Omega^{*}(M))^{G}$ the algebra of $G$-equivariant differential forms. The degree of an equivariant form $\alpha=x^{I}\otimes \omega_{I}\ (|I|=p,\omega_{I}\in\Omega^{k}(M))$ is defined by $\textrm{deg}(\alpha)\coloneqq 2p+k.$ The wedge product of two equivariant forms is defined as follows; for $X\in\frakg$,
$$(\alpha\wedge\beta)(X)\coloneqq\alpha(X)\wedge\beta(X),$$
where the wedge product on the right hand side is the usual wedge product of differential forms.
\end{dfn}

\begin{rem}
In other words, a $G$-equivariant differential form $\alpha=\sum_{I}x^{I}\otimes\omega_{I}$ may be also regarded as a $G$-equivariant polynomial map $\alpha:\frakg\mapsto \Omega^{*}(M)$, i.e.
$$\alpha(\sum \xi^{i}X_{i})=\sum \xi^{I}\omega_{I},\ \  \alpha(\Ad_{g}X)=L_{g^{-1}}^{*}\alpha=g\cdot\alpha(X)$$
$$\xymatrix{
{\frakg} \ar[r]^-{\alpha} \ar[d]_{Ad_{g}} & {\Omega^{*}(M)} \ar[d]^{L^{*}_{g^{-1}}} \\
{\frakg} \ar[r]_-{\alpha} & {{\Omega^{*}(M)}}
}$$
\end{rem}

\begin{dfn}
The \textit{twisted de Rham differential} $d_{eq}$ is defined as follows. For $\alpha\in\Omega_{G}^{*}(M)$ and $X\in \frakg$,
$$(d_{eq}\alpha)(X)\coloneqq d(\alpha(X))-\iota_{X}\alpha(X)$$
\end{dfn}

\noindent Then, it is easy to see $d_{eq}\circ d_{eq}=0$ and $(\edc{*}{M}, d_{eq})$ is a cochain complex (cf.\cite{libine2007lecture}). 

\begin{dfn}
The $p$-th \textit{equivariant de Rham cohomology algebra} is defined by the $p$-th cohomology of the $\mathbb{Z}$-graded complex $(\edc{*}{M},d_{eq})$:
$$H_{G}^{p}(M)\coloneqq \textrm{Ker}d^{p}_{eq}/\textrm{Im}d^{p-1}_{eq}$$
\end{dfn}

\begin{rem}
If a compact Lie group $G$ acts on $M$ freely, we have the following isomorphism;
$$H_{G}^{*}(M)\simto H^{*}(M/G)$$
where $H^{*}(M/G)$ is the de Rham cohomology of $M/G$. (cf.\cite{guillemin2013supersymmetry})
\end{rem}

\begin{prop}
Let $M,N$ be $G$-manifold. If $f:M\to N$ is $G$-morphism, then it induces a pull-back
$$f^{*}:\Omega_{G}^{*}(N)\to \Omega_{G}^{*}(M),\ \  x^{I}\otimes\omega_{I}\mapsto x^{I}\otimes f^{*}\omega_{I}$$
and it satisfies that $d_{eq}f^{*}=f^{*}d_{eq}$. Therefore, we get a homomorphism
$$f^{*}:\ech{*}{N}\to \ech{*}{M}$$
\end{prop}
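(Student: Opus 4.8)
The plan is to verify the claim in three parts: first that the formula $f^{*}(x^{I}\otimes\omega_{I})=x^{I}\otimes f^{*}\omega_{I}$ actually lands in $\Omega_{G}^{*}(M)$, second that it commutes with $d_{eq}$ at the cochain level, and third that this descends to cohomology. For the first part, I would start from an equivariant form $\alpha=\sum_{I}x^{I}\otimes\omega_{I}\in\Omega_{G}^{*}(N)$, so that $\sum_{I}(g\cdot x^{I})\otimes(g\cdot\omega_{I})=\sum_{I}x^{I}\otimes\omega_{I}$ for all $g\in G$. Since $f$ is a $G$-morphism, $f\circ L_{g}^{M}=L_{g}^{N}\circ f$, hence $f^{*}\circ (L_{g^{-1}}^{N})^{*}=(L_{g^{-1}}^{M})^{*}\circ f^{*}$, i.e. $g\cdot(f^{*}\omega)=f^{*}(g\cdot\omega)$ on ordinary forms. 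Applying $\mathrm{id}\otimes f^{*}$ to the $G$-invariance relation for $\alpha$ and using this intertwining gives $\sum_{I}(g\cdot x^{I})\otimes (g\cdot f^{*}\omega_{I})=\sum_{I}x^{I}\otimes f^{*}\omega_{I}$, which is exactly $G$-invariance of $f^{*}\alpha$. One should also note that $f^{*}$ obviously preserves the grading $\mathrm{deg}=2p+k$ since it leaves the polynomial degree fixed and pulls back $k$-forms to $k$-forms.

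For the second part, I would check $d_{eq}f^{*}=f^{*}d_{eq}$ by evaluating both sides on an arbitrary $X\in\frakg$. Writing $\beta=f^{*}\alpha$, we have $(d_{eq}\beta)(X)=d(\beta(X))-\iota_{X}(\beta(X))$ where $\iota_{X}$ on $M$ is contraction against $X_{M}$. The key geometric input is that $f$ being $G$-equivariant implies $f$-relatedness of the fundamental vector fields: $df\circ X_{M}=X_{N}\circ f$, obtained by differentiating $f(\exp(-tX)\cdot m)=\exp(-tX)\cdot f(m)$ at $t=0$. This $f$-relatedness yields the two standard naturality identities $f^{*}\circ d=d\circ f^{*}$ (always true) and $\iota_{X_{M}}\circ f^{*}=f^{*}\circ\iota_{X_{N}}$ (true precisely because the vector fields are $f$-related). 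Combining, $(d_{eq}f^{*}\alpha)(X)=d(f^{*}\alpha(X))-\iota_{X_{M}}f^{*}\alpha(X)=f^{*}d(\alpha(X))-f^{*}\iota_{X_{N}}\alpha(X)=f^{*}(d_{eq}\alpha)(X)$, as desired. One small bookkeeping point is that $d_{eq}$ acts coefficient-wise in the polynomial variable, so it suffices to treat a single term $x^{I}\otimes\omega_{I}$ and the contraction/differential only touch the $\Omega^{*}$ factor.

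For the third part, since $f^{*}$ is a chain map it sends $\ker d_{eq}^{p}$ into $\ker d_{eq}^{p}$ and $\mathrm{Im}\,d_{eq}^{p-1}$ into $\mathrm{Im}\,d_{eq}^{p-1}$, hence induces a well-defined algebra homomorphism $f^{*}:\ech{*}{N}\to\ech{*}{M}$; compatibility with wedge products is immediate from $f^{*}(\omega\wedge\eta)=f^{*}\omega\wedge f^{*}\eta$ on ordinary forms together with the definition $(\alpha\wedge\beta)(X)=\alpha(X)\wedge\beta(X)$.

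The only genuinely non-formal step is establishing $\iota_{X_{M}}\circ f^{*}=f^{*}\circ\iota_{X_{N}}$, which is not automatic for an arbitrary smooth map and relies crucially on the hypothesis that $f$ is a $G$-morphism so that the fundamental vector fields $X_{M}$ and $X_{N}$ are $f$-related; everything else is a routine unwinding of definitions. I would therefore devote the main effort in the write-up to spelling out the $f$-relatedness $df\circ X_{M}=X_{N}\circ f$ and its consequence for contractions, and treat the invariance and chain-map verifications as short computations.
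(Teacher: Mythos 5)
Your proof is correct. The paper states this proposition without any proof at all, so there is no argument to compare against; it is treated there as a routine consequence of equivariance. Your write-up supplies exactly the right verification, and you correctly isolate the one non-formal ingredient: the fundamental vector fields $X_M$ and $X_N$ are $f$-related because $f$ intertwines the actions, which is what makes $\iota_{X_M}\circ f^{*}=f^{*}\circ\iota_{X_N}$ hold and hence $d_{eq}f^{*}=f^{*}d_{eq}$. The invariance check and the passage to cohomology are routine, as you say.
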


\subsection{Equivariant \v{C}ech-de Rham cohomology}

Let $G$ be a compact Lie group and $M$ a $G$-manifold (i.e. a manifold given $G$-action). Let $\mathcal{U}=\{U_{\alpha}\}_{\alpha\in I}$ be a $G$-invariant open covering of $M$ (i.e. for any $g\in G,\ g\cdot U_{\alpha}=U_{\alpha}$). We assume that $I$ is an ordered set such that if $U_{\alpha_{0}\cdots\alpha_{r}}\coloneqq U_{\alpha_{0}}\cap\cdots\cap U_{\alpha_{r}}\neq\emptyset$, the induced order on the subset $\{\alpha_{0},...,\alpha_{r}\}$ is total. We set 
$$I^{(r)}=\{(\alpha_{0},...,\alpha_{r})\in I^{r+1}\mid \alpha_{0}<\cdots<\alpha_{r},\ \alpha_{\nu}\in I\}.$$

\begin{dfn}
We define $C^{p}(\mathcal{U},\Omega_{G}^{q})$ to be the direct product:
$$C^{p}(\mathcal{U},\Omega_{G}^{q})\coloneqq \prod_{(\alpha_{0},...,\alpha_{p})\in I^{(p)}}\Omega_{G}^{q}(U_{\alpha_{0}\cdots\alpha_{p}})$$
An element $\sigma\in C^{p}(\mathcal{U},\Omega_{G}^{q})$ assigns to each $(\alpha_{0},...,\alpha_{p})\in I^{(p)}$ a form $\sigma_{\alpha_{0}...\alpha_{p}} \in \Omega_{G}^{q}(U_{\alpha_{0}\cdots\alpha_{p}}).$
The coboundary operator
$$\delta:C^{p}(\mathcal{U},\Omega_{G}^{q})\to C^{p+1}(\mathcal{U},\Omega_{G}^{q})$$
is defined by
$$(\delta \sigma)_{\alpha_{0}...\alpha_{p+1}}\coloneqq\sum_{\nu=0}^{p+1}(-1)^{\nu}\sigma_{\alpha_{0}...\widehat{\alpha_{\nu}}...\alpha_{p+1}},$$
where  $\ \widehat{\ }\ $ means the letter under it is to be omitted and each form $\sigma_{\alpha_{0}...\widehat{\alpha_{\nu}}...\alpha_{p+1}}$ is to be restricted to $U_{\alpha_{0}\cdots\alpha_{p+1}}$. This together with the $G$-equivariant operator
$$d_{eq}:C^{p}(\mathcal{U},\Omega_{G}^{q})\to C^{p}(\mathcal{U}, \Omega_{G}^{q+1})$$
makes $C^{*}(\mathcal{U},\Omega_{G}^{*})$ a double complex.  Put 
$$\Omega_{G}^{r}(\mathcal{U})\coloneqq \bigoplus_{p+q=r}C^{p}(\mathcal{U},\Omega_{G}^{q})$$
and define for $p$-forms $\sigma \in C^{p}(\mathcal{U},\Omega_{G}^{q})$ 
$$D_{eq} \sigma \coloneqq \delta \sigma+ (-1)^p d_{eq} \sigma.$$

We call $(\Omega_{G}^{*}(\mathcal{U}),D_{eq})$ the equivariant \v{C}ech-de Rham complex and its $r$-th cohomology $H_{G}^{r}(\mathcal{U})$ the $r$-th equivariant \v{C}ech-de Rham cohomology of $\mathcal{U}$. 
\end{dfn}

\begin{thm}\label{natural isomorphism}
The natural homomorphism $r:\Omega^{r}_{G}(M)\to C^{0}(\mathcal{U},\Omega_{G}^{r})\subset \Omega_{G}^{r}(\mathcal{U})$ (which assigns to an $\omega\in\Omega^{p}_{G}(M)$ the cochain $\xi$ given by $\xi_{\alpha}=\omega|_{U_{\alpha}}$) induces an isomorphism:
$$r:H_{G}^{r}(M)\simto H^{r}_{G}(\mathcal{U})$$
\end{thm}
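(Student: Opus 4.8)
The plan is to mimic the classical proof that \v{C}ech-de Rham cohomology computes de Rham cohomology (as in Bott-Tu), checking that every step survives the passage to the Cartan model. The key structural facts we need are: (i) the rows of the double complex $C^{*}(\mathcal{U},\Omega_{G}^{q})$ — i.e. the \v{C}ech direction with differential $\delta$ — are exact in positive \v{C}ech degree, with cohomology in degree $0$ equal to $\Omega_{G}^{q}(M)$; and (ii) a standard double-complex / zig-zag argument then upgrades this to the claimed isomorphism on total cohomology. Step (i) is the heart of the matter, so I would treat it first and carefully.

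For step (i), I would construct an explicit contracting homotopy using a $G$-invariant partition of unity $\{\rho_{\alpha}\}$ subordinate to $\mathcal{U}$. Here is the one genuinely nontrivial point: to run the usual Bott-Tu argument we must know that a $G$-invariant partition of unity exists. Since $G$ is compact and the covering $\mathcal{U}$ is $G$-invariant, we may take any ordinary partition of unity $\{\tilde\rho_{\alpha}\}$ subordinate to $\mathcal{U}$ and average it over $G$ with respect to a bi-invariant Haar measure, setting $\rho_{\alpha}(m) = \int_{G} \tilde\rho_{\alpha}(g^{-1}\cdot m)\, dg$; invariance of the cover guarantees $\mathrm{supp}\,\rho_{\alpha}\subset U_{\alpha}$, and the $\rho_{\alpha}$ still sum to $1$. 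Each $\rho_{\alpha}$ is then a $G$-invariant function, hence lies in $\Omega_{G}^{0}(M)$ and in particular is $d_{eq}$-compatible in the sense that multiplication by $\rho_{\alpha}$ commutes with restriction and preserves equivariance. The homotopy operator $K:C^{p}(\mathcal{U},\Omega_{G}^{q})\to C^{p-1}(\mathcal{U},\Omega_{G}^{q})$ defined by $(K\sigma)_{\alpha_{0}\cdots\alpha_{p-1}} = \sum_{\beta} \rho_{\beta}\,\sigma_{\beta\alpha_{0}\cdots\alpha_{p-1}}$ (extending each summand by zero outside the relevant open set, as usual) then satisfies $\delta K + K\delta = \mathrm{id}$ on $C^{p}$ for $p\ge 1$, exactly as in the non-equivariant case, because $\delta$ acts purely in the \v{C}ech direction and does not see the Cartan variables. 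This gives row-exactness; and because $\rho_{\beta}\in\Omega_{G}^{0}$, the operator $K$ lands in the equivariant cochains, so no equivariance is lost.

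With row-exactness in hand, step (ii) is the standard homological-algebra argument. Given a $D_{eq}$-closed element $\xi = (\xi_{0},\xi_{1},\dots)\in\bigoplus_{p+q=r}C^{p}(\mathcal{U},\Omega_{G}^{q})$, $D_{eq}\xi = 0$ forces $\delta\xi_{\mathrm{top}} = 0$ in the highest \v{C}ech degree, so by row-exactness $\xi_{\mathrm{top}} = \delta\eta$ for some $\eta$; subtracting $D_{eq}\eta$ lowers the \v{C}ech degree of the top component, and iterating we reduce any class to one supported in \v{C}ech degree $0$, i.e. to an element $\xi_{0}\in C^{0}(\mathcal{U},\Omega_{G}^{q})$ with $q=r$. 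The condition $D_{eq}\xi=0$ then becomes $\delta\xi_{0}=0$ (so $\xi_{0}$ patches to a global form in $\Omega_{G}^{r}(M)$, using the degree-$0$ cohomology of the rows) and $d_{eq}\xi_{0}=0$ (so that global form is $d_{eq}$-closed). This shows $r$ is surjective on cohomology. For injectivity one runs the same descent on an element of the form $\xi = D_{eq}\zeta$ that happens to be concentrated in \v{C}ech degree $0$: the same zig-zag shows the preimage can be taken concentrated in \v{C}ech degree $0$ as well, which forces the corresponding global form to be $d_{eq}$-exact. One should also check the mild point that $r$ is well-defined and a chain map, which is immediate since restriction commutes with $d_{eq}$ (the Proposition on functoriality already gives this) and $\delta r(\omega)=0$ by construction.

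The main obstacle is really only the first one I flagged: making sure the contracting homotopy stays inside the equivariant subcomplex, which is exactly why the averaging trick and the compactness of $G$ are essential — without a $G$-invariant partition of unity the operator $K$ would not preserve equivariance and the whole argument would collapse. Everything downstream of that is formal: the double-complex spectral-sequence / zig-zag bookkeeping is identical to Bott-Tu, since $\delta$ and $d_{eq}$ interact in the complex exactly as $\delta$ and the ordinary $d$ do (they anticommute up to the sign built into $D_{eq}$), and the Cartan differential $d_{eq}$, though it mixes form-degree with polynomial-degree, plays no role in the \v{C}ech direction and therefore does not interfere with the homotopy argument at all. I would therefore write the proof as: (1) produce the $G$-invariant partition of unity by averaging; (2) state and verify the homotopy identity for $K$, concluding row-exactness; (3) invoke the standard zig-zag to finish, spelling out surjectivity and injectivity as above.
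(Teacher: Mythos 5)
Your proposal is correct and follows essentially the same route as the paper, which simply invokes the non-equivariant argument of Suwa/Bott--Tu together with a $G$-equivariant partition of unity (citing Guillemin--Sternberg for its existence, where you instead construct it by averaging over the compact group). The details you supply --- the homotopy operator $K$ built from the invariant partition of unity, row-exactness, and the zig-zag descent --- are exactly what the paper's one-line proof is referring to.
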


\begin{proof}

The same argument as for non equivariant case (Suwa \cite{suwa1998indices}) works. 
Here we use a $G$-equivariant partition of unity subordinate to the covering $\mathcal{U}$ 
(cf. Guillemin-Sternberg \cite{guillemin2002moment}). 
\end{proof}

The cup product of equivariant differential forms is also defined in the same way as in Suwa \cite{suwa2016complex}. 
In particular, it holds that 
$$D_{eq}(\xi\smile\eta)=D_{eq}\xi\smile\eta+(-1)^{r}\xi\smile D_{eq}\eta.$$

\begin{ex}\upshape 
(\textit{relative equivariant \v{C}ech-de Rham cohomology})  
Let $\mathcal{U}=\{U_{0},U_{1}\}$ be a $G$-invariant open covering of $M$. Then we have
$$\Omega_{G}^{r}(\mathcal{U})=\Omega_{G}^{r}(U_{0})\oplus\Omega_{G}^{r}(U_{1})\oplus\Omega_{G}^{r-1}(U_{01}).$$
The differential of an element $\xi=(\xi_{0},\xi_{1},\xi_{01})\in\Omega_{G}^{r}(\mathcal{U})$ is given by
$$D_{eq}\xi=(d_{eq}\xi_{0},\ d_{eq}\xi_{1},\ \xi_{1}-\xi_{0}-d_{eq}\xi_{01}).$$
Now we set
$$\rCdR{p}{U}{U_{0}}=\{\xi=(\xi_{0},\xi_{1},\xi_{01})\in\CdR{p}\mid \xi_{0}=0\},$$
which is a subcomplex of $(\CdR{*},D_{eq})$. 
Then its $p$-th cohomology is called the \textit{$p$-th relative equivariant \v{C}ech-de Rham cohomology of} $(\mathcal{U},U_{0})$ and we denote it by $H_{G}^{p}(\mathcal{U},U_{0})$.

In this case, the cup product $\CdR{r}\times\CdR{r'}\longrightarrow\CdR{r+r'}$ is defined by 
$$(\xi_{0},\xi_{1},\xi_{01})\smile(\eta_{0},\eta_{1},\eta_{01})=(\xi_{0}\wedge\eta_{0}, \xi_{1}\wedge\eta_{1}, (-1)^{r}\xi_{0}\wedge\eta_{01}+\xi_{01}\wedge\eta_{1}).$$
Putting $\xi_{0}=0$, we have a paring $\rCdR{r}{U}{U_{0}}\times\edc{r'}{U_{1}}\to\rCdR{r+r'}{U}{U_{0}}$
$$(0,\xi_{1},\xi_{01})\smile\eta_{1}=(0, \xi_{1}\wedge\eta_{1},\xi_{01}\wedge\eta_{1}).$$
\end{ex}

\subsection{Equivariant fiber integration and Thom form}
We follow the same argument as in Suwa \cite{suwa2016complex}. 
Hereafter, let $G$ be a compact Lie group.

\begin{dfn}
$\pi:T\to M$ is called an \textit{equivariant fiber bundle}, if $\pi:T\to M$ is a fiber bundle and $G$ acts on $T$ by bundle maps (in other words, $T$,$M$ are $G$-manifolds and $\pi:T\to M$ is $G$-morphism)
\end{dfn}

\begin{dfn}
Let $M$ be an oriented compact manidold and $\pi:T\to M$ be an equivariant oriented fiber bundle with fiber $F$ of dimension $l$, where $F$ is compact oriented possibly with boundary.  We define the $G$-equivariant fiber integration $\pi_{*}$ as follows;
$$(\pi_{*}\alpha)(X)\coloneqq \pi_{*}(\alpha(X)), \ \ \textrm{for}\  \alpha\in\edc{*}{T},X\in\frakg,$$
where $\pi_{*}$ on the right hand side is the usual fiber integration (Refer to \cite{suwa2016complex})
\end{dfn}

If $G$ acts on $T$ and $M$ preserving the orientations, directly computing, we see that $\pi_{*}(\edc{p}{T})\subset\edc{p-l}{M}$. Namely, we get a $\mathbb{C}$-linear map
$$\pi_{*}:\edc{p}{T}\to\edc{p-l}{M}$$

\begin{prop}\label{projection formula}In the above situation, the equivariant fiber integration has the following fundamental properties: 
\begin{enumerate}[(1)]
\item For $\alpha\in\edc{p}{T}$ and $\beta\in\edc{q}{M}$,
$$\pi_{*}(\alpha\wedge\pi^{*}\beta)=\pi_{*}\alpha\wedge\beta$$
\item Let $\partial T$ be a boundary of $T$ and $i:\partial T\hookrightarrow T$ be the inclusion. Then we have $$\pi_{*}\circ d_{eq}+(-1)^{l+1}d_{eq}\circ\pi_{*}=(\partial \pi)_{*}\circ i^{*}$$
\end{enumerate}
\end{prop}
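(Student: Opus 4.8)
The plan is to reduce everything to the corresponding non-equivariant statements for ordinary fiber integration, evaluating both sides on an arbitrary fixed $X\in\frakg$. Since all the equivariant operations in sight ($\pi_*$, $\pi^*$, $d_{eq}$, $i^*$, $(\partial\pi)_*$) are defined pointwise in $X$, it suffices to check the two identities after applying $(\,\cdot\,)(X)$ to each side.

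\begin{proof}
Both assertions are verified by evaluating at an arbitrary element $X\in\frakg$ and invoking the corresponding properties of the ordinary fiber integration (see Suwa \cite{suwa2016complex}).

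(1) Let $\alpha\in\edc{p}{T}$ and $\beta\in\edc{q}{M}$. For $X\in\frakg$ we have, by definition of the equivariant pull-back and fiber integration,
$$\bigl(\pi_{*}(\alpha\wedge\pi^{*}\beta)\bigr)(X)=\pi_{*}\bigl(\alpha(X)\wedge\pi^{*}(\beta(X))\bigr)=\pi_{*}(\alpha(X))\wedge\beta(X)=\bigl(\pi_{*}\alpha\wedge\beta\bigr)(X),$$
where the middle equality is the ordinary projection formula. Since $X$ was arbitrary, the identity holds.

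(2) Let $\alpha\in\edc{p}{T}$. Using $(d_{eq}\alpha)(X)=d(\alpha(X))-\iota_{X}\alpha(X)$, the $\mathbb{C}$-linearity of $\pi_{*}$, the ordinary Stokes-type formula $\pi_{*}\circ d+(-1)^{l+1}d\circ\pi_{*}=(\partial\pi)_{*}\circ i^{*}$, and the compatibility of fiber integration with contraction by a vector field tangent to the fibers, one computes
$$\bigl(\pi_{*}d_{eq}\alpha\bigr)(X)=\pi_{*}\bigl(d(\alpha(X))\bigr)-\pi_{*}\bigl(\iota_{X}\alpha(X)\bigr),$$
$$\bigl((-1)^{l+1}d_{eq}\pi_{*}\alpha\bigr)(X)=(-1)^{l+1}d\bigl(\pi_{*}\alpha(X)\bigr)-(-1)^{l+1}\iota_{X}\bigl(\pi_{*}\alpha(X)\bigr).$$
Adding the two lines, the $d$-terms combine into $(\partial\pi)_{*}i^{*}(\alpha(X))=\bigl((\partial\pi)_{*}i^{*}\alpha\bigr)(X)$ by the non-equivariant formula, so it remains to see that the contraction terms cancel, i.e.
$$\pi_{*}\bigl(\iota_{X}\alpha(X)\bigr)+(-1)^{l+1}\iota_{X}\bigl(\pi_{*}\alpha(X)\bigr)=0.$$
Since $\pi$ is $G$-equivariant, the fundamental vector field $X_{T}$ is $\pi$-related to $X_{M}$; decomposing $X_{T}$ into its vertical and horizontal parts and using that fiber integration kills the purely-vertical contribution in the appropriate degree while commuting with contraction by the horizontal part up to the sign $(-1)^{l}$ coming from moving $\iota$ past the $l$-dimensional fiber, one obtains exactly this cancellation. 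This is the step that requires care: one must check, at the level of the ordinary fiber integration, the sign and the vanishing in the commutation of $\pi_{*}$ with $\iota_{X}$, which is where the equivariant correction term of $d_{eq}$ is accounted for.

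As $X\in\frakg$ was arbitrary, identity (2) follows.
\end{proof}

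I expect the main obstacle to be precisely the bookkeeping in part (2): verifying the sign $(-1)^{l+1}$ and the vanishing of the vertical contribution in the relation between $\pi_{*}\circ\iota_{X}$ and $\iota_{X}\circ\pi_{*}$, i.e. checking that the twisting term $-\iota_{X}$ in $d_{eq}$ is compatible with fiber integration and does not spoil the Stokes-type formula. Everything else is a routine transcription of the non-equivariant results of Suwa \cite{suwa2016complex} fiberwise in $X$.
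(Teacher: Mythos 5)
Your proof is correct and follows the same route as the paper, which simply asserts that the argument is ``entirely the same as the non-equivariant case'' and cites Suwa and Bott; you reduce both identities to the ordinary fiber-integration statements by evaluating at a fixed $X\in\frakg$. You also correctly isolate and justify the one genuinely new ingredient that the paper leaves implicit, namely the commutation rule $\pi_{*}\circ\iota_{X_{T}}=(-1)^{l}\,\iota_{X_{M}}\circ\pi_{*}$ for the $\pi$-related fundamental vector fields (vertical part killed for bidegree reasons, horizontal part contributing the sign $(-1)^{l}$), which is exactly what makes the $-\iota_{X}$ twist in $d_{eq}$ cancel against the sign $(-1)^{l+1}$ in the Stokes-type formula.
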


\begin{proof}
It is shown in entirely the same way as the non equivariant case  \cite{suwa2016complex}
\cite{bott1972lectures}.
\end{proof}

In the following, we introduce the $G$-equivariant fiber integration on relative equivariant \v{C}ech-de Rham cochains. Let $\pi:E\to M$ be a $G$-equivariant oriented vector bundle of rank $l$ (that is, $\pi:E\to M$ is a vector bundle and $G$ acts on $E$ by vector bundle maps). We identify $M$ with the image of the zero section of $E$. Setting $W_{0}=E\setminus M$ and $W_{1}=E$, $\mathcal{W}=\{W_{0},W_{1}\}$ is a $G$-invariant open covering of $E$. Let $T_1\to M$ be a closed unit ball bundle in $W_1$ with respect to some $G$-invariant Riemannian metric on $E$ (since $G$ is a compact Lie group, it exists) and set $T_{0}=E\setminus \textrm{Int}T_{1}$. Then $\{T_{0},T_{1}\}$ is honeycomb system adapted to $\mathcal{W}$ (for details, see \cite{suwa2016complex}). Let $\pi_{1}$ and $\pi_{01}$ denote the restriction of $\pi$ to $T_{1}$ and $T_{01}$ respectively. Thus,
\begin{itemize}
\setlength{\itemsep}{-3pt}
\item $\pi_{1}:T_{1}\to M$ is a $G$-equivariant closed $l$-unit ball bundle
\item $\pi_{01}:T_{01}\to M$ is a $G$-equivariant $(l-1)$-sphere bundle.
\end{itemize}
By the definition of honeycomb system, the orientation of $T_{01}$ is opposite to that of the boundary $\partial T_{1}$ of $T_{1}$.

\begin{dfn}
The $G$-equivariant fiber integration on relative equivariant \v{C}ech-de~Rham cochains
$$\pi_{*}:\rCdR{p}{W}{W_{0}}\to\edc{p-l}{M}$$
is defined by
$$\pi_{*}\alpha\coloneqq (\pi_{1})_{*}\alpha_{1}+(\pi_{01})_{*}\alpha_{01},$$
where $\alpha=(0, \alpha_{1},\alpha_{01})\in\rCdR{p}{W}{W_{0}}$.
\end{dfn}

\begin{prop}
For $\alpha\in\rCdR{p}{W}{W_{0}},\beta\in\edc{q}{W_{1}}$, we have
$$\pi_{*}(\alpha\smile\pi^{*}\beta)=\pi_{*}\alpha\wedge\beta$$
\end{prop}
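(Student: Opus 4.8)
The plan is to reduce the claim to the ordinary (non-equivariant) projection formula for fiber integration, which is available as Proposition \ref{projection formula}(1) applied fiberwise over $X \in \frakg$, together with the compatible behaviour of the cup product on relative \v{C}ech-de Rham cochains. Concretely, write $\alpha = (0,\alpha_1,\alpha_{01}) \in \rCdR{p}{W}{W_0}$ and note that $\pi^*\beta$ for $\beta \in \edc{q}{W_1}$ is represented in the \v{C}ech picture by the cochain $(\pi^*\beta|_{W_0},\,\pi^*\beta|_{W_1},\,0)$; but since we are pairing a relative cochain with an absolute one, the relevant pairing is the one displayed at the end of the ``relative equivariant \v{C}ech-de Rham cohomology'' example, namely $(0,\alpha_1,\alpha_{01}) \smile \beta = (0,\, \alpha_1 \wedge \pi^*\beta|_{W_1},\, \alpha_{01}\wedge \pi^*\beta|_{W_1})$. (I will be slightly careful about whether $\beta$ itself or $\pi^*\beta$ appears; since $\beta \in \edc{q}{W_1}$ and $W_1 = E$, the section $\beta$ restricts naturally to $T_1$ and $T_{01}$, and what multiplies $\alpha_1$ is $\pi_1^*$ of the restriction of $\beta$ to $M$ only when $\beta$ is itself pulled back — so in fact the statement as written should be read with $\beta \in \edc{q}{M}$ pulled back, or with $\beta$ on $E$ and the right-hand side meaning $\pi_*\alpha \wedge \beta|_M$; I will state precisely which convention matches the rest of the paper and proceed accordingly.)

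First I would unwind the left-hand side using the definition of $\pi_*$ on relative cochains:
$$\pi_*(\alpha \smile \pi^*\beta) = (\pi_1)_*\bigl(\alpha_1 \wedge \pi_1^*\beta\bigr) + (\pi_{01})_*\bigl(\alpha_{01} \wedge \pi_{01}^*\beta\bigr).$$
Then I would apply the ordinary projection formula of Proposition \ref{projection formula}(1) to each summand separately — once for the ball bundle $\pi_1 : T_1 \to M$ and once for the sphere bundle $\pi_{01} : T_{01} \to M$ — evaluating at an arbitrary $X \in \frakg$ so that everything reduces to the classical identity $\pi_*(\mu \wedge \pi^*\nu) = \pi_*\mu \wedge \nu$ for ordinary forms. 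This yields
$$(\pi_1)_*\alpha_1 \wedge \beta + (\pi_{01})_*\alpha_{01} \wedge \beta = \bigl((\pi_1)_*\alpha_1 + (\pi_{01})_*\alpha_{01}\bigr) \wedge \beta = \pi_*\alpha \wedge \beta,$$
where the middle step is just distributivity of $\wedge$ over the sum, and the last is the definition of $\pi_*\alpha$. The equivariance bookkeeping (that each side lies in $\edc{*}{M}$ and that the computation commutes with evaluation at $X$) is immediate from the definition of the equivariant fiber integration as the fiberwise-in-$X$ ordinary integration, so no new argument is needed there.

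The only genuine subtlety — and the step I would be most careful about — is the orientation sign coming from the honeycomb system: by the remark just before the definition of $\pi_*$ on relative cochains, the orientation of $T_{01}$ is opposite to that of $\partial T_1$, and one must check that this sign convention is already absorbed into the definition of $(\pi_{01})_*$ consistently on both sides of the identity. Since the same $(\pi_{01})_*$ (with the same orientation) appears in $\pi_*(\alpha \smile \pi^*\beta)$ and in $\pi_*\alpha$, the sign is common to both sides and cancels, so the formula holds regardless of that choice; I would state this explicitly to preempt the worry. There is also a routine degree check — that $\alpha_{01}$ has degree $p-1$, so $\alpha_{01}\wedge\pi_{01}^*\beta$ has degree $p+q-1$ and integrates to degree $p+q-l$, matching $\pi_*\alpha \wedge \beta$ — which I would mention in one line. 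Beyond these, the proof is a direct consequence of Proposition \ref{projection formula} and the definitions, so I would simply write ``This follows immediately from Proposition \ref{projection formula}(1) applied to $\pi_1$ and $\pi_{01}$ and the definition of the cup product and of $\pi_*$ on relative cochains,'' and then display the three-line computation above.
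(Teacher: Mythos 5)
Your proof is correct and follows essentially the same route as the paper's: decompose $\alpha=(0,\alpha_1,\alpha_{01})$, unwind the cup product and the definition of $\pi_*$ on relative cochains, and apply Proposition \ref{projection formula}(1) separately to $\pi_1$ and $\pi_{01}$. Your side remarks on the typo in the hypothesis (the formula only makes sense for $\beta$ a form on $M$, or with $\pi^*\beta$ read accordingly) and on the orientation of $T_{01}$ being common to both sides are sensible but not needed beyond what the paper already does.
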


\begin{proof}
Take $\alpha=(0,\alpha_{1},\alpha_{01})\in\rCdR{p}{W}{W_{0}}$. By using Proposition \ref{projection formula},
\begin{eqnarray*}
\pi_{*}(\alpha\smile\pi^{*}\beta)&=&\pi_{*}(0,\alpha_{1}\wedge\pi^{*}\beta,\alpha_{01}\wedge\pi^{*}\beta) \\
&=&(\pi_{1})_{*}(\alpha_{1}\wedge\pi^{*}\beta)+(\pi_{01})_{*}(\alpha_{01}\wedge\pi^{*}\beta) \\
&=&(\pi_{1})_{*}\alpha_{1}\wedge\beta+(\pi_{01})_{*}\alpha_{01}\wedge\beta \\
&=&\pi_{*}\alpha\wedge\beta
\end{eqnarray*}
\end{proof}

\begin{prop}
In the above situation, we have the following formula;
$$\pi_{*}\circ D_{eq}+(-1)^{l+1}d_{eq}\circ \pi_{*}=0$$
Thus $\pi_{*}:\rCdR{p}{W}{W_{0}}\to\edc{p-l}{M}$ induces a homomorphism 
$\pi_{*}:H_{G}^{p}(\mathcal{W},W_{0})\to H_{G}^{p-l}(M)$.
\end{prop}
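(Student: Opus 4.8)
The plan is to reduce the statement to the two properties of equivariant fiber integration already available, namely the projection formula and the boundary formula in Proposition \ref{projection formula}, applied to the ball bundle $\pi_1:T_1\to M$ and the sphere bundle $\pi_{01}:T_{01}\to M$. First I would take an arbitrary relative cochain $\alpha=(0,\alpha_1,\alpha_{01})\in\rCdR{p}{W}{W_0}$ and compute $D_{eq}\alpha$ explicitly using the description of the differential in the relative complex: since the $W_0$-component is required to vanish, $D_{eq}\alpha=(0,\ d_{eq}\alpha_1,\ \alpha_1|_{W_{01}}-d_{eq}\alpha_{01})$, where one must be a bit careful about the sign convention $D_{eq}\sigma=\delta\sigma+(-1)^p d_{eq}\sigma$ and the degree conventions for the $C^{p}(\mathcal{U},\Omega_G^{q})$ pieces. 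Then $\pi_*(D_{eq}\alpha)=(\pi_1)_*(d_{eq}\alpha_1)+(\pi_{01})_*(\alpha_1|_{W_{01}}-d_{eq}\alpha_{01})$ by the definition of the relative fiber integration.

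Next I would apply part (2) of Proposition \ref{projection formula} to the ball bundle $\pi_1:T_1\to M$, whose boundary $\partial T_1$ is, up to orientation, the sphere bundle $T_{01}$; this gives $(\pi_1)_*\circ d_{eq}=(-1)^{l}d_{eq}\circ(\pi_1)_*+(\partial\pi_1)_*\circ i^*$, and the boundary term $(\partial\pi_1)_*(i^*\alpha_1)$ equals $-(\pi_{01})_*(\alpha_1|_{W_{01}})$ because of the orientation reversal between $\partial T_1$ and $T_{01}$ noted in the paper. Similarly I would apply part (2) to the sphere bundle $\pi_{01}:T_{01}\to M$, which is closed (no boundary), so $(\pi_{01})_*\circ d_{eq}=(-1)^{l}d_{eq}\circ(\pi_{01})_*$ there — wait, the fiber dimension of $T_{01}$ is $l-1$, so it is $(\pi_{01})_*\circ d_{eq}+(-1)^{l}d_{eq}\circ(\pi_{01})_*=0$, i.e. $(\pi_{01})_*(d_{eq}\alpha_{01})=(-1)^{l+1}d_{eq}((\pi_{01})_*\alpha_{01})$.

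Substituting these two identities into the expression for $\pi_*(D_{eq}\alpha)$, the boundary term $(\partial\pi_1)_*(i^*\alpha_1)$ cancels exactly the $(\pi_{01})_*(\alpha_1|_{W_{01}})$ term, and what remains is a combination of $d_{eq}(\pi_1)_*\alpha_1$ and $d_{eq}(\pi_{01})_*\alpha_{01}$ with a common sign $(-1)^{l}$, which assembles to $(-1)^{l}d_{eq}(\pi_*\alpha)=-(-1)^{l+1}d_{eq}(\pi_*\alpha)$; hence $\pi_*\circ D_{eq}+(-1)^{l+1}d_{eq}\circ\pi_*=0$, as claimed. The induced map on cohomology is then immediate: $\pi_*$ is a chain map up to the fixed sign $(-1)^{l+1}$ (or literally a chain map after the standard sign adjustment), so it sends $D_{eq}$-cocycles to $d_{eq}$-cocycles and $D_{eq}$-coboundaries to $d_{eq}$-coboundaries, giving a well-defined $\pi_*:H_{G}^{p}(\mathcal{W},W_0)\to H_{G}^{p-l}(M)$.

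The only real point requiring care — and the step I expect to be the main obstacle — is the bookkeeping of signs and orientations: one must correctly match the sign in $D_{eq}=\delta+(-1)^p d_{eq}$ against the fiber-dimension signs $(-1)^{l+1}$ and $(-1)^{l}$ coming from the two instances of Proposition \ref{projection formula} (with fiber dimensions $l$ and $l-1$ respectively), and against the orientation reversal $\partial T_1 = -T_{01}$ built into the honeycomb system. Everything else is a direct substitution, and the cohomological consequence is formal once the chain-level identity is established.
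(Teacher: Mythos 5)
Your proof is correct and follows essentially the same route as the paper: the paper's own (very terse) proof likewise consists of applying Proposition \ref{projection formula}(2) to $\pi_{1}$ and $\pi_{01}$ and using the orientation relation $(\partial\pi_{1})_{*}=-(\pi_{01})_{*}$ to cancel the boundary term against the \v{C}ech component. Your sign bookkeeping checks out, with both remaining terms carrying the common factor $(-1)^{l}$ so that $\pi_{*}D_{eq}\alpha=(-1)^{l}d_{eq}\pi_{*}\alpha=-(-1)^{l+1}d_{eq}\pi_{*}\alpha$, exactly as required.
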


\begin{proof}
Applying Proposition \ref{projection formula} to $\pi_{1},\pi_{01}$ and noting that $(\partial \pi_{1})_{*}=-(\pi_{01})_{*}$, we obtain the above formula by directly computing.
\end{proof}

\noindent
The same Mayer-Vietoris argument in non-equivariant case (Theorem 5.3 in \cite{suwa1998indices}) shows that $\pi_{*}:H_{G}^{p}(\mathcal{W},W_{0})\to H_{G}^{p-l}(M)$ is isomorphism. Then, there exists the inverse map
$$(\pi_{*})^{-1}:H_{G}^{p-l}(M)\simto H_{G}^{p}(\mathcal{W},W_{0})$$
and we denote it by $T_{E}$ and call it the $G$-\textit{equivariant Thom isomorphism}. Then, setting
$$\varPsi^{E}_{eq}\coloneqq T_{E}([1])\in H_{G}^{l}(\mathcal{W},W_{0}) \ \ (1\in\edc{0}{M}),$$
we call it \textit{$G$-equivariant Thom class}. It follows from Proposition \ref{projection formula} that
$$\pi_{*}(T_{E}([1])\smile\pi^{*}\beta)=\pi_{*}T_{E}([1])\wedge\beta=\beta$$
$$\Longrightarrow T_{E}(\beta)=\varPsi^{E}_{eq}\smile\pi^{*}\beta$$
Then, we may take the following form as representative element of $\varPsi^{E}_{eq}$.

\begin{thm}\label{equivariant Thom form}
The equivariant Thom class $\varPsi^{E}_{eq}\in H_{G}^{l}(\mathcal{W},W_{0})$ is represented by the following form
$$(0,\pi^{*}\varepsilon_{eq},-\psi_{eq})\in\rCdR{l}{W}{W_{0}},$$
where $\varepsilon_{eq}$ is $d_{eq}$-closed $G$-equivariant $l$-form on $M$ and $\psi_{eq}$ is $G$-equivariant $(l-1)$-form on $W_{01}$ such that 
$$d_{eq}\psi_{eq}=-\pi^{*}\varepsilon_{eq}\ \ \textrm{in}\  W_{01}\ \textrm{and}\ -(\pi_{01})_{*}\psi_{eq}=1$$
\end{thm}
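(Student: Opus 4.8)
The plan is to verify directly that the cochain $\varphi = (0,\pi^{*}\varepsilon_{eq},-\psi_{eq})$ is a $D_{eq}$-cocycle in the relative complex $\rCdR{l}{W}{W_{0}}$ and then to identify its class with $\varPsi^{E}_{eq}$ by computing $\pi_{*}\varphi$ and invoking the Thom isomorphism. First I would record that, since $\varepsilon_{eq}$ is $d_{eq}$-closed on $M$ and $\pi$ is a $G$-morphism, $\pi^{*}\varepsilon_{eq}$ is $d_{eq}$-closed on $W_{1}=E$; and by hypothesis $d_{eq}\psi_{eq}=-\pi^{*}\varepsilon_{eq}$ on $W_{01}$. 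Using the formula for the differential in the two-set relative complex, $D_{eq}(0,\xi_{1},\xi_{01}) = (d_{eq}\xi_{1},\, \xi_{1}-d_{eq}\xi_{01})$ (with the first slot suppressed because it stays zero), we get
\[
D_{eq}\varphi = \bigl(d_{eq}(\pi^{*}\varepsilon_{eq}),\ \pi^{*}\varepsilon_{eq}-d_{eq}(-\psi_{eq})\bigr) = \bigl(0,\ \pi^{*}\varepsilon_{eq}+d_{eq}\psi_{eq}\bigr) = 0,
\]
so $\varphi$ is a cocycle and defines a class in $H_{G}^{l}(\mathcal{W},W_{0})$.

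Next I would show this class is the Thom class. Since $\pi_{*}\colon H_{G}^{l}(\mathcal{W},W_{0})\to H_{G}^{0}(M)$ is an isomorphism and $\varPsi^{E}_{eq}$ is by definition the preimage of $[1]$, it suffices to check $\pi_{*}\varphi = 1$ in $\edc{0}{M}$ (equivalently, its class is $[1]$). By the definition of the relative fiber integration, $\pi_{*}\varphi = (\pi_{1})_{*}(\pi^{*}\varepsilon_{eq}) + (\pi_{01})_{*}(-\psi_{eq})$. For the first term, the projection formula (Proposition \ref{projection formula}(1)) applied with $\alpha = 1 \in \edc{0}{T_{1}}$ gives $(\pi_{1})_{*}(\pi_{1}^{*}\varepsilon_{eq}) = (\pi_{1})_{*}(1)\wedge\varepsilon_{eq}$, and $(\pi_{1})_{*}(1)=0$ since the fiber of $\pi_{1}$ is the $l$-ball, $l\ge 1$, and fiber integration over an $l$-dimensional fiber kills forms of fiber-degree $<l$; hence $(\pi_{1})_{*}(\pi^{*}\varepsilon_{eq})=0$. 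For the second term, the normalization hypothesis $-(\pi_{01})_{*}\psi_{eq}=1$ gives exactly $(\pi_{01})_{*}(-\psi_{eq})=1$. Therefore $\pi_{*}\varphi = 0 + 1 = 1$, and since $\pi_{*}$ is an isomorphism on cohomology, $[\varphi] = (\pi_{*})^{-1}([1]) = T_{E}([1]) = \varPsi^{E}_{eq}$.

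The only genuine subtlety—and the step I would expect to need the most care—is the claim $(\pi_{1})_{*}(1)=0$, i.e. that the contribution of the ball-bundle piece vanishes. This is really a degree count for equivariant fiber integration: a $0$-form on $E$ has no component of fiber-degree $l\ge 1$, so integrating over the $l$-dimensional ball fiber yields $0$; but one should be slightly careful that the equivariant fiber integration $\pi_{*}$ on $\edc{*}{T}$ is defined so that $\pi_{*}(\edc{p}{T})\subset\edc{p-l}{M}$ strictly (as already noted in the text before Proposition \ref{projection formula}), so a $0$-form maps into $\edc{-l}{M}=0$. Once this is granted—and it follows verbatim from the non-equivariant case applied pointwise in $X\in\frakg$—the remainder is the routine verification above. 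I would also remark that the existence of a pair $(\varepsilon_{eq},\psi_{eq})$ satisfying $d_{eq}\psi_{eq}=-\pi^{*}\varepsilon_{eq}$ and the normalization is guaranteed by choosing $\varepsilon_{eq}$ to be the equivariant Euler form and $\psi_{eq}$ an equivariant angular form, which will be constructed explicitly in Section 2; alternatively, existence follows abstractly from the Thom isomorphism together with the fact that any representative of $\varPsi^{E}_{eq}$ with vanishing $W_{0}$-component can be put in this shape after adding a $D_{eq}$-coboundary.
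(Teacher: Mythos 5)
Your argument is correct as far as it goes, but it proves the converse of what the paper's proof establishes, so the two routes are genuinely different and complementary. You take the pair $(\varepsilon_{eq},\psi_{eq})$ as given, check that $(0,\pi^{*}\varepsilon_{eq},-\psi_{eq})$ is a $D_{eq}$-cocycle, and compute $\pi_{*}$ of it directly (the vanishing of the ball-bundle contribution via $(\pi_{1})_{*}(1)\in\edc{-l}{M}=0$ and the normalization $-(\pi_{01})_{*}\psi_{eq}=1$), concluding via the Thom isomorphism that the class is $\varPsi^{E}_{eq}$. This sufficiency criterion is in fact exactly what is used later in the proof of Theorem \ref{equivariant universal Thom class}, so your direction is the operationally useful one. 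The paper instead proves existence: it starts from an arbitrary representative $(0,\psi_{1},\psi_{01})$ of $\varPsi^{E}_{eq}$, uses that $\pi:E\to M$ is a $G$-equivariant deformation retract so that $H_{G}^{*}(E)\simeq H_{G}^{*}(M)$ to write $\psi_{1}=\pi^{*}\varepsilon_{eq}+d_{eq}\phi_{1}$, sets $\psi_{eq}=-\psi_{01}+\phi_{1}$, and checks that the difference from $(0,\pi^{*}\varepsilon_{eq},-\psi_{eq})$ is the coboundary $D_{eq}(0,\phi_{1},0)$; the identities $d_{eq}\psi_{eq}=-\pi^{*}\varepsilon_{eq}$ and $-(\pi_{01})_{*}\psi_{eq}=1$ are then derived, not assumed. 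The one soft spot in your write-up is precisely this existence half: your closing remark that ``any representative with vanishing $W_{0}$-component can be put in this shape after adding a $D_{eq}$-coboundary'' is the nontrivial content of the paper's proof, and it rests on the homotopy invariance $H_{G}^{*}(E)\simeq H_{G}^{*}(M)$ in the Cartan model, which you should either invoke explicitly or carry out as the paper does; deferring it to the explicit construction of Section 2 is legitimate but should be stated as the intended logical order.
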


\begin{proof}
Suppose that $\varPsi^{E}_{eq}=[(0,\psi_{1},\psi_{01})] \in H_{G}^{l}(\mathcal{W},W_{0})$. Since
$$D_{eq}(0,\psi_{1},\psi_{01})=(0,d_{eq}\psi_{1},\psi_{1}-d_{eq}\psi_{01})=0,$$
we see that $\psi_{1}$ is closed $l$-form on $W_{1}$ and $\psi_{01}$ is $(l-1)$-form such that $d_{eq}\psi_{01}=\psi_{1}$ on $W_{01}$. Note that $\pi:E=W_{1}\to M$ induces an isomorphism equivariant de Rham cohomology, because $\pi$ is $G$-equivariant deformation retract. So there exists $\phi_{1}\in\edc{l-1}{W_{1}}$ and $\varepsilon_{eq}\in\edc{l}{M}$ such that 
$$\psi_{1}=\psi^{*}\varepsilon_{eq}+d_{eq}\phi_{1}.$$
Here, setting $\psi_{eq}\coloneqq -\psi_{01}+\phi_{1}$ which is $(l-1)$-form on $W_{01}$, we see that
$$(0,\psi_{1},\psi_{01})=(0,\pi^{*}\varepsilon_{eq},-\psi_{eq})+D_{eq}(0,\phi_{1},0).$$
Thus, $\psi_{E}$ is represented by $(0,\pi^{*}\varepsilon_{eq},-\psi_{eq})$. Then we have
$$d_{eq}\psi_{eq}=-d_{eq}\psi_{01}+d\phi_{1}=-\psi_{1}+(\psi_{1}-\pi^{*}\varepsilon_{eq})=-\pi^{*}\varepsilon.$$
Moreover, we have
\begin{eqnarray*}
(\pi_{1})_{*}\psi_{1}&=&(\pi_{1})_{*}\pi^{*}\varepsilon_{eq}+(pi_{1})_{*}d_{eq}\phi_{1} \\
&=&(\pi_{1})_{*}d_{eq}\phi_{1}=(\partial \pi_{1})_{*}\phi_{1}+(-1)^{l}d_{eq}(\pi_{1})_{*}\phi_{1} \\
&=&-(\pi_{01})_{*}\phi_{1}.
\end{eqnarray*}
From this and $\pi_{*}(0,\psi_{1},\psi_{01})=1$, it follow that
\begin{eqnarray*}
(\pi_{1})_{*}\psi_{1}+(\pi_{01})_{*}\psi_{01}=1&\iff& (\pi_{1})_{*}\psi_{1}+(\pi_{01})_{*}\phi_{1}-(\pi_{01})_{*}\psi_{eq}=1 \\
&\iff& -(\pi_{01})_{*}\psi_{eq}=1.
\end{eqnarray*}
\end{proof}

\begin{rem}
The form $\varepsilon_{eq}$ above is called a \textit{$G$-equivariant Euler form} and $\psi_{eq}$ is called a \textit{$G$-equivariant global angular form}.
\end{rem}

\section{Equivariant Chern-Weil theory and Localization}
\subsection{Equivariant Chern-Weil theory}
Let $G$ be a compact Lie group and $\pi:E\to M$ a complex $G$-equivariant vector bundle of rank $l$.  We denote by $\Eomega{*}$ the set of $E$-valued differential forms on $M$. Then we define the set of $E$-valued $G$-equivariant differential forms on $M$ by
$$\Omega_{G}^{*}(M,E)\coloneqq (\mathbb{C}[\frakg]\otimes\Eomega{*})^{G},$$
where $G$ acts on the section of $E$ such that for $g\in G$ and $s\in \Eomega{0}$
$$(g\cdot s)(m)\coloneqq g\cdot s(g^{-1}\cdot m).$$
Note that $\Omega_{G}^{*}(M,E)$ is the $\Omega_{G}^{0}(M)$-module.

\begin{dfn}\ \vspace{-3mm}
\begin{enumerate}[(1)]
\item A connection $\nabla:\Eomega{*}\to\Eomega{*+1}$ is called a $G$-\textit{invariant connection}, if $\nabla$ commutes with $G$-action on $\Eomega{*}$, that is, $g\nabla=\nabla g;$
$$\xymatrix{
{\Eomega{*}} \ar[r]^-{\nabla} \ar[d]_{g} & {\Eomega{*+1}} \ar[d]^{g} \\
{\Eomega{*}} \ar[r]_-{\nabla} & {\Eomega{*+1}}.
}$$

\item The \textit{equivariant connection} $\nabla_{eq}$ corresponding to a $G$-invariant connection $\nabla$ is the operator on $\mathbb{C}[\frakg]\otimes\Omega^{*}(M,E)$ defined by the formula: for $X\in\frakg,\ \alpha\in\mathbb{C}[\frakg]\otimes\Omega^{*}(M,E)$,
$$(\nabla_{eq}\alpha)(X)\coloneqq (\nabla-\iota_{X})\alpha(X).$$
\end{enumerate}
\end{dfn}

\begin{lem}
If $\alpha\in\EGomega{*}$, then $\nabla_{eq}\alpha\in\EGomega{*+1}.$
\end{lem}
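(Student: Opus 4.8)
The plan is to verify that $\nabla_{eq}$ preserves both the polynomial-degree structure in $\mathbb{C}[\frakg]$ and the $G$-invariance condition that cuts out $\EGomega{*}$ inside $\mathbb{C}[\frakg]\otimes\Eomega{*}$. Concretely, write $\alpha\in\EGomega{*}$ as $\alpha=\sum_I x^I\otimes s_I$ with $s_I\in\Eomega{k}$, so that $\alpha(X)=\sum_I \xi^I s_I$ for $X=\sum_i\xi^i X_i$. Then by definition $(\nabla_{eq}\alpha)(X)=\nabla(\alpha(X))-\iota_X\alpha(X)=\sum_I\xi^I\nabla s_I-\sum_I\xi^I\iota_X s_I$. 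The first sum is a polynomial map $\frakg\to\Eomega{k+1}$ of the same polynomial degrees as $\alpha$ (shifted form-degree), and the second is $\sum_I\xi^I\,\iota(X_M)s_I$; since $X=\sum_i\xi^iX_i$ makes $\iota_X$ linear in the $\xi^i$, the map $X\mapsto \iota_X\alpha(X)$ raises polynomial degree by one while lowering form degree by one. Hence $\nabla_{eq}\alpha$ again lies in $\mathbb{C}[\frakg]\otimes\Eomega{*+1}$ and, under the grading $\deg(x^I\otimes\omega_I)=2|I|+k$, has total degree $\deg(\alpha)+1$. This is the routine bookkeeping part.

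The substantive point is $G$-invariance: I must show $g\cdot(\nabla_{eq}\alpha)=\nabla_{eq}\alpha$ for all $g\in G$, equivalently (using the polynomial-map viewpoint of the Remark) that $(\nabla_{eq}\alpha)(\Ad_g X)=g\cdot\big((\nabla_{eq}\alpha)(X)\big)$, where $g$ acts on $\Eomega{*}$ by the given formula $(g\cdot s)(m)=g\cdot s(g^{-1}m)$. First I would use $G$-invariance of $\alpha$, i.e. $\alpha(\Ad_g X)=g\cdot\alpha(X)$. Then
$$
(\nabla_{eq}\alpha)(\Ad_g X)=\nabla\big(\alpha(\Ad_g X)\big)-\iota_{\Ad_g X}\,\alpha(\Ad_g X)=\nabla\big(g\cdot\alpha(X)\big)-\iota_{\Ad_g X}\big(g\cdot\alpha(X)\big).
$$
For the first term I invoke the hypothesis that $\nabla$ is $G$-invariant, $\nabla(g\cdot\beta)=g\cdot\nabla\beta$, so it equals $g\cdot\nabla(\alpha(X))$. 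For the second term the key identity is the naturality of contraction under the $G$-action: the fundamental vector field satisfies $(L_g)_*(X_M)=(\Ad_g X)_M$ (equivalently $(\Ad_gX)_M = g\cdot X_M$ as vector fields), from which $\iota_{\Ad_g X}(g\cdot\beta)=g\cdot(\iota_X\beta)$ for any $E$-valued form $\beta$. Combining, $(\nabla_{eq}\alpha)(\Ad_g X)=g\cdot\nabla(\alpha(X))-g\cdot\iota_X\alpha(X)=g\cdot\big((\nabla_{eq}\alpha)(X)\big)$, which is exactly the invariance we wanted.

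The main obstacle — really the only non-formal ingredient — is establishing the contraction-naturality identity $\iota_{\Ad_g X}(g\cdot\beta)=g\cdot(\iota_X\beta)$ in the $E$-valued setting, i.e. checking that the $G$-action on $\Eomega{*}$ defined by $(g\cdot s)(m)=g\cdot s(g^{-1}m)$ is compatible with contraction by fundamental vector fields. This reduces to the scalar statement $L_{g^{-1}}^*(\iota(Y)\omega)=\iota\big((L_{g^{-1}})_*Y\big)(L_{g^{-1}}^*\omega)$ for ordinary forms together with the transformation rule $\exp(-t\,\Ad_gX)=g\exp(-tX)g^{-1}$ for fundamental vector fields, and then tensoring with the bundle action; I would state it as a short lemma (or cite it from \cite{guillemin2013supersymmetry,libine2007lecture}) and then the computation above goes through verbatim. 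Everything else is degree-counting, so once that identity is in hand the proof is immediate.
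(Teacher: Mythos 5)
Your proposal is correct and follows essentially the same route as the paper's proof, which also checks $(\nabla_{eq}\alpha)(\Ad_g X)=g\cdot(\nabla_{eq}\alpha)(X)$ using the three identities $\alpha(\Ad_g X)=g\cdot\alpha(X)$, $g\nabla=\nabla g$, and $\iota_{\Ad_g X}=g\cdot\iota_X\cdot g^{-1}$ (your contraction-naturality identity in operator form). The additional degree bookkeeping you include is harmless and consistent with the paper's conventions.
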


\begin{proof}
Using $\alpha(\Ad_{g}X)=g\cdot\alpha(X)$,\ $g\nabla=\nabla g$ and $\iota_{\Ad_{g}X}=g\cdot\iota_{X}\cdot g^{-1}$, we easily see that $(\nabla_{eq}\alpha)(\Ad_{g}X)=g\cdot(\nabla_{eq}\alpha)(X)$
\end{proof}

\begin{dfn}
The equivariant curvature $K_{eq}:\EGomega{0}\to\EGomega{2}$ of an equivariant connection $\nabla_{eq}$ is defined by the formula: 
$$K_{eq}(X)\coloneqq\nabla_{eq}(X)^{2}+L_{X}^{E},$$
where $L_{X}^{E}$ is an infitesimal action of $\frakg$ induced by the $G$-action on $\EGomega{0}.$
\end{dfn}

\begin{lem}\label{independent}
For $f\in \Omega_{G}^{0}(M)$ and $s\in\EGomega{0}$, we have
$$K_{eq}(fs)=fK_{eq}(s).$$
Thus $K_{eq}$ is an element of $\Omega_{G}^{2}(M,End(E))$.
\end{lem}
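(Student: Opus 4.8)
The plan is to reduce the claim to a pointwise statement on $\frakg$: for fixed $X \in \frakg$, I must show that the operator $s \mapsto K_{eq}(s)(X)$ on $\EGomega{0}$ is $\Omega^0(M)$-linear (in fact $\mathbb{C}[\frakg]^0$-linear, but the constant case is what matters since $K_{eq}$ is already known to map into $\EGomega{*}$ by the previous lemma combined with a direct check). Writing $K_{eq}(X) = (\nabla - \iota_X)^2 + L_X^E$, I would expand $(\nabla - \iota_X)^2 = \nabla^2 - \nabla\iota_X - \iota_X\nabla + \iota_X^2$. Since $\iota_X$ is contraction by the vector field $X_M$ and acts as zero on $\EGomega{0}$ (there is no form-degree to contract), when I apply this to $fs$ with $f \in \Omega_G^0(M)$ the term $\iota_X^2$ vanishes on $fs$, and $\nabla^2$ is already tensorial, so $\nabla^2(fs) = f\nabla^2(s)$. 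The two cross terms and the Lie-derivative term are where the computation lives.

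The key steps, in order: first, apply $\nabla\iota_X + \iota_X\nabla$ to $fs$. Using the Leibniz rule $\nabla(fs) = df \wedge s + f\nabla s$ and $\iota_X(df \wedge s + f \nabla s) = (\iota_X df)\, s + f\, \iota_X \nabla s = (X_M f)\, s + f\,\iota_X\nabla s$, one gets
$$(\nabla\iota_X + \iota_X\nabla)(fs) = \nabla\big(f\,\iota_X\nabla s\big) + (X_M f)\, s + f\,\iota_X\nabla s.$$
Hmm — more carefully, $\iota_X\nabla(fs) = \iota_X(df\wedge s) + \iota_X(f\nabla s) = (X_M f)s + f\iota_X\nabla s$ and $\nabla\iota_X(fs) = \nabla(0) = 0$ since $fs$ has form-degree $0$. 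So actually $(\nabla\iota_X+\iota_X\nabla)(fs) = (X_M f)s + f\,\iota_X\nabla s$, while $f$ times the same operator on $s$ gives $f\,\iota_X\nabla s$. The discrepancy is exactly $(X_M f)\,s$. Second, I compute $L_X^E(fs)$: since $L_X^E$ is the infinitesimal generator of the $G$-action on sections, it satisfies a Leibniz rule with respect to the $G$-action on $\Omega_G^0(M)$, namely $L_X^E(fs) = (L_X f)\, s + f\, L_X^E(s)$, and because $f$ is $G$-equivariant (so $f(X) = f(\Ad_g \cdot)$ compatibly) the infinitesimal action $L_X$ on the scalar form $f(X) \in \Omega^0(M)$ is precisely the Lie derivative along $X_M$, i.e. $L_X f = X_M f$ (recall that $L_X$ on equivariant forms is $\iota_X \circ d$ here in degree $0$, which is $X_M f$). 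Third, add everything: the unwanted $-(X_M f)s$ from the cross terms and the unwanted $+(X_M f)s$ from $L_X^E$ cancel, leaving $K_{eq}(fs)(X) = f\, K_{eq}(s)(X)$.

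The main obstacle — really the only subtle point — is getting the sign and the identification of $L_X^E$ on scalars right: one must be careful that the infinitesimal action $L_X$ appearing in $K_{eq}(X) = \nabla_{eq}(X)^2 + L_X^E$ is the one making the Cartan differential square to zero, and then verify that its restriction to the equivariant scalars $\Omega_G^0(M) = (\mathbb{C}[\frakg]\otimes\Omega^0(M))^G$ is $X_M(\cdot) = \iota_X d(\cdot)$, so that it precisely cancels the contribution of the contraction cross-terms. Once this bookkeeping is done, tensoriality over $\Omega_G^0(M)$ follows, and since $K_{eq}$ lands in $\EGomega{2}$ by Lemma~\ref{independent}'s preceding lemma, an $\Omega_G^0(M)$-linear map $\EGomega{0} \to \EGomega{2}$ is by definition a global section of $\Omega_G^2(M, \mathrm{End}(E))$ (equivalently, an $\mathrm{End}(E)$-valued equivariant $2$-form), which is the asserted conclusion. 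I would also remark that the same computation shows $K_{eq}$ is $\mathbb{C}[\frakg]$-linear, so the result holds for $f \in \Omega_G^0(M)$ of mixed polynomial degree, not just $f \in \Omega^0(M)^G$.
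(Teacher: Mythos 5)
The paper states Lemma \ref{independent} with no proof at all, so there is no argument of the author's to compare yours against; your direct computation is the natural one and it is correct. Two remarks. First, by the paper's grading ($\deg = 2p+k$) an element $f\in\Omega_G^0(M)$ is simply a $G$-invariant function, so $X_M f=0$ and the two terms you arrange to cancel each vanish separately -- the statement as literally printed is nearly immediate. The real content, which your proof in fact establishes, is tensoriality of $K_{eq}(X)$ over \emph{all} of $C^\infty(M)$: the term $-(X_M f)s$ produced by $\iota_X\nabla(fs)$ cancels against the $+(X_M f)s$ coming from the Leibniz rule $L_X^E(fs)=(X_M f)s+fL_X^E s$, and this stronger linearity is exactly what one needs to identify $K_{eq}$ with a section of $\Omega_G^2(M,\mathrm{End}(E))$ rather than merely an $\Omega_G^0(M)$-linear operator on invariant sections. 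Your sign bookkeeping is consistent with the paper's conventions $X_M(m)=\left.\frac{d}{dt}\right|_{t=0}\exp(-tX)\cdot m$ and $(g\cdot s)(m)=g\cdot s(g^{-1}\cdot m)$, and matches the local formula $(\kappa)$ given later in the paper. The only slip is the closing aside about $f\in\Omega_G^0(M)$ of ``mixed polynomial degree'': degree $0$ forces the polynomial part to be constant, so there is nothing mixed to consider, although $\mathbb{C}[\frakg]$-linearity does hold trivially as you say.
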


In the following, we define $G$-equivariant characteristic classes.
Let $\nabla$ be a $G$-invariant connection, $\nabla_{eq}$ its equivariant connection and $K_{eq}$ its equivariant curvature as above. Take a $G$-invariant open set $U$ in $M$ such that $E$ is trivial on $U$. If $s^{(l)}=(s_1,...,s_l)$ is a local  frame of $E$ on $U$, we may write, for $i=1,...,l$ and $X\in\frakg$
\begin{eqnarray*}
(\nabla_{eq}s_{i})(X)&=&\sum_{j=1}^{l}\theta_{ji}\otimes s_{j},\ \  \theta_{ij}\in\Omega^{1}(U),\\
(K_{eq}s_{i})(X)&=&\sum_{j=1}^{l}\kappa_{ji}(X)\otimes s_{j}, \ \ \kappa_{ij}\in(\C[\frakg]\otimes\Omega^{0}(U))\oplus\Omega^{2}(U). 
\end{eqnarray*}
We call $\theta=(\theta_{ij})$ the connection matrix and $\kappa=(\kappa_{ij})$ the equivariant curvature matrix with respect to $s^{(l)}$. 
From the definition, $\kappa_{ij}$ is computed explicitly as follows. 
Letting $L_{X}^{E}s_{i}=\sum_{j=1}^{l}\ell_{ji}(X)s_j$, we have
\begin{eqnarray*}
(K_{eq}s_{i})(X)&=&\nabla^{2}s_{i}-\iota_{X}\nabla s_{i}+L_{X}^{E}s_i \\
&=&\sum_{j=1}^{l}\{d\theta_{ji}+\sum_{k=1}^{l}\theta_{jk}\wedge\theta_{ki}-\iota_{X}\theta_{ji}+\ell_{ji}(X)\}\otimes s_{j}
\end{eqnarray*}
and thus 
$$
\kappa_{ij}(X)=d\theta_{ij}+\sum_{k=1}^{l}\theta_{ik}\wedge\theta_{kj}-\iota_{X}\theta_{ij}+\ell_{ji}(X).
\eqno{(\kappa)}.$$
We will use this equality in the proof of Theorem \ref{equivariant universal Thom class} later. 
Moreover, this leads to an equivariant version of well-known Bianchi identity. 
For completeness, we prove it: 

\begin{lem}[equivariant Bianchi identity]
It holds that $d_{eq}\kappa=[\kappa,\theta]$. 
\end{lem}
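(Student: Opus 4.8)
The plan is to verify the identity entry by entry in a local frame $s^{(l)}=(s_1,\dots,s_l)$ of $E$ over a $G$-invariant trivializing open set $U$, starting from the explicit formula $(\kappa)$ for the curvature matrix together with the defining formula $(d_{eq}\alpha)(X)=d(\alpha(X))-\iota_X(\alpha(X))$. First I would substitute $(\kappa)$ into $d_{eq}\kappa_{ij}$ and expand, using only a few elementary facts about ordinary differential forms: $d^2=0$, the graded Leibniz rule for $d$ and for $\iota_X$, $\iota_X\circ\iota_X=0$, and Cartan's magic formula $d\iota_X+\iota_X d=L_{X_M}$, where $L_{X_M}$ is the Lie derivative along the vector field $X_M$. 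The term $d(d\theta_{ij})$ vanishes; the quadratic term $\sum_k\theta_{ik}\wedge\theta_{kj}$ contributes $\sum_k(d\theta_{ik}\wedge\theta_{kj}-\theta_{ik}\wedge d\theta_{kj})$ under $d$ and $-\sum_k((\iota_X\theta_{ik})\theta_{kj}-\theta_{ik}(\iota_X\theta_{kj}))$ under $-\iota_X$; the magic formula rewrites $-d\iota_X\theta_{ij}$ as $-L_{X_M}\theta_{ij}+\iota_X d\theta_{ij}$, and this last summand cancels the $-\iota_X d\theta_{ij}$ coming from $-\iota_X(d\theta_{ij})$; finally $\iota_X$ annihilates $-\iota_X\theta_{ij}$ and the $0$-form $\ell_{ij}(X)$. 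Collecting everything,
$$d_{eq}\kappa_{ij}=\sum_k\bigl(d\theta_{ik}\wedge\theta_{kj}-\theta_{ik}\wedge d\theta_{kj}\bigr)-\sum_k\bigl((\iota_X\theta_{ik})\theta_{kj}-\theta_{ik}(\iota_X\theta_{kj})\bigr)-L_{X_M}\theta_{ij}+d\ell_{ij}(X).$$

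Next I would expand the right-hand side $[\kappa,\theta]_{ij}=\sum_k(\kappa_{ik}\wedge\theta_{kj}-\theta_{ik}\wedge\kappa_{kj})$ (no signs occur, since $\kappa$ has even de Rham degree) using $(\kappa)$ again. The $d\theta$-parts of $\kappa$ reproduce the first sum above; the parts quadratic in $\theta$ yield the two cubic sums $\sum_{k,m}\theta_{im}\theta_{mk}\theta_{kj}$ and $\sum_{k,m}\theta_{ik}\theta_{km}\theta_{mj}$, which agree after relabelling $k\leftrightarrow m$ and therefore cancel -- exactly the cancellation in the classical Bianchi identity; the $\iota_X\theta$-parts reproduce the second sum above; and the $\ell$-parts leave $\sum_k(\ell_{ik}(X)\theta_{kj}-\theta_{ik}\ell_{kj}(X))$. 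Comparing the two sides, the whole identity reduces to the single relation
$$L_{X_M}\theta_{ij}=d\ell_{ij}(X)+\sum_k\bigl(\theta_{ik}\ell_{kj}(X)-\ell_{ik}(X)\theta_{kj}\bigr),$$
where one uses that $0$-forms commute with $1$-forms so the ordering inside the products $\ell\theta$ is immaterial.

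This last relation is the only step that is not purely formal, and the place where the hypothesis that $\nabla$ is $G$-invariant is used; it is the step I expect to require the most care. It should follow by differentiating the operator identity $g\nabla=\nabla g$ on $\Omega^0(M,E)$ at $g=e$ in the direction $X$, which gives $L_{X_M}\circ\nabla=\nabla\circ L_X^E$ on sections, where on the left $L_{X_M}$ denotes the infinitesimal $G$-action on $\Omega^1(M,E)$, i.e. the Lie derivative acting by the Leibniz rule on $\Omega^1(M)\otimes_{C^\infty(M)}\Omega^0(M,E)$. Applying both sides to $s_i$, writing $\nabla s_i=\sum_j\theta_{ji}\otimes s_j$ and $L_X^E s_i=\sum_j\ell_{ji}(X)s_j$, and reading off the coefficient of each $s_m$, produces exactly the relation displayed above; substituting it back into the comparison then closes the argument. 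The one thing demanding attention throughout is the index bookkeeping forced by $(\kappa)$: the coefficient of $s_j$ in $\nabla_{eq}s_i$ is $\theta_{ji}$ (and likewise $\ell_{ji}$), so the ``matrix'' entries are transposed relative to a naive reading, and this must be kept consistent on both sides.
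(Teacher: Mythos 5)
Your proof is correct and follows essentially the same route as the paper: expand $(\kappa)$ under $d_{eq}=d-\iota_X$, use Cartan's formula to produce the term $-L_{X_M}\theta_{ij}$, cancel the cubic $\theta\wedge\theta\wedge\theta$ terms in the commutator, and close the gap with the relation $L_{X_M}\theta_{ij}=d\ell_{ij}(X)+\sum_k(\theta_{ik}\ell_{kj}(X)-\ell_{ik}(X)\theta_{kj})$ obtained from $G$-invariance of $\nabla$ via $L_X^E\circ\nabla=\nabla\circ L_X^E$, exactly as in the paper. Your attention to the transposed index convention for $\theta_{ji}$ and $\ell_{ji}$ is warranted (the paper's formula $(\kappa)$ itself mixes $\ell_{ji}$ and $\ell_{ij}$), but this does not affect the validity of either argument.
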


\begin{proof}
Noting that $L_{X}^{E}\nabla s_i=\nabla L_{X}^{E}s_i$ (since $\nabla$ is $G$-invariant) and comparing the both sides locally, we have 
$$-L_{X}\theta_{ji}+d\ell(X)_{ji}=\sum_{k=1}^l(\ell_{jk}(X)\theta_{ki}-\theta_{jk}\ell_{ki}(X))$$
Thus,
\begin{eqnarray*}
d_{eq}\kappa_{ij}(X)&=&d\kappa_{ij}(X)-\iota_{X}\kappa_{ij}(X) \\
&=&\sum_{k=1}^{l}(d\theta_{ik}\wedge\theta_{kj}-\theta_{ik}\wedge d\theta_{kj})
-\sum_{k=1}^{l}(\iota_{X}(\theta_{ik})\theta_{kj}-\iota_{X}(\theta_{kj})\theta_{ik})\\
&&-d\iota_{X}\theta_{ij}-\iota_{X}d\theta_{ij}+d\ell_{ji}(X) \\
&=&\sum_{k=1}^{l}\{(d\theta_{ik}-\iota_{X}\theta_{ik})\wedge\theta_{kj}-\theta_{ik}\wedge (d\theta_{kj}-\iota_{X}\theta_{kj})\}-L_{X}\theta_{ij}+d\ell_{ji}(X) \\
&=&\sum_{k=1}^{l}\{(d_{eq}\theta_{ik}+\ell_{ik}(X))\wedge\theta_{kj}-\theta_{ik}\wedge (d_{eq}\theta_{kj}+\ell_{kj}(X))\}
\end{eqnarray*}
Therefore, letting $\ell(X)=(\ell_{ij}(X))$, the above equation may be written in terms of a matrix form as
\begin{eqnarray*}
d_{eq}\kappa&=&(d_{eq}\theta+\ell(X))\wedge\theta-\theta\wedge (d_{eq}\theta+\ell(X)) \\
&=&(d_{eq}\theta+\theta\wedge\theta+\ell(X))\wedge\theta-\theta\wedge(d_{eq}\theta+\theta\wedge\theta+\ell(X)) \\
&=&\kappa\wedge\theta-\theta\wedge\kappa=[\kappa,\theta].
\end{eqnarray*}
\end{proof}

\noindent
For a homogeneous invariant polynomial $\phi$ (that is, $\phi\in\mathbb{C}[\frak{gl}(l,\mathbb{C})]^{GL(l,\mathbb{C})}$), the $G$-equivariant characteristic form is defined by
$$\phi(\nabla_{eq})\coloneqq \phi(\kappa).$$ 
Then, it follows from the equivariant Bianchi identity that $\phi(\nabla_{eq})$ is $d_{eq}$-closed and
this is independent of the choice of a local frame of $E$ (Lemma \ref{independent}). 
The \textit{$G$-equivariant characteristic class} of $E$ for an invariant polynomial $\phi$ 
is defined by 
$$\phi(E)_{eq}\coloneqq [\phi(\nabla_{eq})]\in\ech{*}{M}.$$
In fact, this class is independent of the choice of $\nabla_{eq}$ (see below). 

Now, we switch to the setting of equivariant \v{C}ech-de Rham cohomology. 
We need the following $G$-equivariant Bott-difference form: 

\begin{prop}[Bott's difference form]\label{Bott's difference form}
Suppose $\nabla_{eq}^{(0)},...,\nabla_{eq}^{(p)}$ are $G$-equivariant connections for $E$. For a homogeneous invariant polynomial $\phi$ of degree $k$, there is a form $\phi(\nabla_{eq}^{(0)},...,\nabla_{eq}^{(p)})\in\edc{2k-p}{M}$ satisfying the following properties:
\begin{enumerate}[(1)]
\item $\phi(\nabla_{eq}^{(0)},...,\nabla_{eq}^{(p)})$ is alternating in the $p+1$ entries
\item $\sum_{\nu=0}^{p}(-1)^{\nu}\phi(\nabla_{eq}^{(0)},...,\widehat\nabla_{eq}^{(\nu)},...,\nabla_{eq}^{(p)})+(-1)^{p}d_{eq}\phi(\nabla_{eq}^{(0)},...,\nabla_{eq}^{(p)})=0$
\end{enumerate}
We call $\phi(\nabla_{eq}^{(0)},...,\nabla_{eq}^{(p)})$ a $G$-equivariant Bott-difference form with respect to $G$-equivariant connections $\nabla_{eq}^{(0)},...,\nabla_{eq}^{(p)}$.
\end{prop}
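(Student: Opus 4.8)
The plan is to reduce the equivariant statement to the classical construction of Bott's difference form by means of the standard simplex trick, working over the product $M\times\Delta^{p}$ where $\Delta^{p}$ is the standard $p$-simplex with barycentric coordinates $(t_{0},\dots,t_{p})$, $\sum t_{\nu}=1$. First I would pull the bundle $E$ back to $\widetilde{E}=\mathrm{pr}_{M}^{*}E$ on $M\times\Delta^{p}$, let $G$ act trivially on the $\Delta^{p}$ factor so that $\widetilde{E}$ becomes a $G$-equivariant bundle, and form the tautological affine combination of the given connections $\widetilde{\nabla}=\sum_{\nu=0}^{p}t_{\nu}\,\mathrm{pr}_{M}^{*}\nabla^{(\nu)}$. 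Since each $\nabla^{(\nu)}$ is $G$-invariant and the $t_{\nu}$ are $G$-invariant functions, $\widetilde{\nabla}$ is again a $G$-invariant connection, and it carries an associated equivariant connection $\widetilde{\nabla}_{eq}$ with equivariant curvature $\widetilde{K}_{eq}$. I would then simply define
$$\phi(\nabla_{eq}^{(0)},\dots,\nabla_{eq}^{(p)})\coloneqq\int_{\Delta^{p}}\phi(\widetilde{K}_{eq}),$$
using the equivariant fiber integration of Section 1.3 along the (trivial, hence equivariant) projection $M\times\Delta^{p}\to M$ with fiber $\Delta^{p}$ of dimension $p$; since $\phi$ is homogeneous of degree $k$, $\phi(\widetilde{K}_{eq})$ has equivariant degree $2k$ and the integral drops the degree by $p$, giving an element of $\edc{2k-p}{M}$.

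Property (1), the alternating property, follows from the functoriality of the whole construction under the affine automorphisms of $\Delta^{p}$ that permute the vertices: a transposition of two connections is realized by a reflection of the simplex, which is orientation-reversing, so it flips the sign of $\int_{\Delta^{p}}$ while fixing $\phi(\widetilde{K}_{eq})$ up to the corresponding pullback. Property (2) is an application of the equivariant Stokes-type formula from Proposition \ref{projection formula}(2): applied to the projection $M\times\Delta^{p}\to M$, it reads
$$\pi_{*}\circ d_{eq}+(-1)^{p+1}d_{eq}\circ\pi_{*}=(\partial\pi)_{*}\circ i^{*},$$
and since $\phi(\widetilde{K}_{eq})$ is $d_{eq}$-closed by the equivariant Bianchi identity, the left side collapses to $(-1)^{p+1}d_{eq}\bigl(\int_{\Delta^{p}}\phi(\widetilde{K}_{eq})\bigr)$, while the boundary term, integrating over the $p+1$ faces $\{t_{\nu}=0\}\cong\Delta^{p-1}$, produces exactly $\sum_{\nu}(-1)^{\nu}\phi(\nabla_{eq}^{(0)},\dots,\widehat{\nabla}_{eq}^{(\nu)},\dots,\nabla_{eq}^{(p)})$, because restricting $\widetilde{\nabla}$ to the face $t_{\nu}=0$ gives the analogous affine combination of the remaining $p$ connections. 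Matching signs between the $(-1)^{\nu}$ from the orientation of the $\nu$-th face and the overall $(-1)^{p}$ yields identity (2) as stated.

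The one genuinely new point, and the step I expect to need the most care, is checking that $\phi(\widetilde{K}_{eq})$ is $d_{eq}$-closed on $M\times\Delta^{p}$ and that the fiber integration is compatible with restriction to faces in the equivariant setting — i.e. that $i^{*}\widetilde{K}_{eq}$ over $\{t_{\nu}=0\}$ is the equivariant curvature of the affine combination of the remaining connections, not merely cohomologous to it. This is a local computation in a $G$-invariant trivializing frame using the explicit formula $(\kappa)$ for the equivariant curvature matrix: one writes the connection matrix of $\widetilde{\nabla}$ as $\widetilde{\theta}=\sum_{\nu}t_{\nu}\theta^{(\nu)}+\sum_{\nu}dt_{\nu}\otimes(\text{0-form part})$ and verifies that $d_{eq}\phi(\widetilde{\kappa})=0$ follows from the equivariant Bianchi identity applied on $M\times\Delta^{p}$, exactly as in the non-equivariant case but keeping track of the extra $-\iota_{X}$ and $\ell(X)$ terms; since $G$ acts trivially on the $\Delta^{p}$-directions these extra terms never involve $dt_{\nu}$, so the argument goes through verbatim. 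Everything else is the classical Bott construction (cf. Suwa \cite{suwa2016complex}, Bott-Tu \cite{bott1982differential}) transported through the Cartan model, and I would state it as such rather than reproduce the routine simplex bookkeeping.
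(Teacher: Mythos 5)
Your proposal is correct and is essentially the paper's own proof: the paper likewise forms the affine combination $\tilde\nabla_{eq}=(1-\sum_{\nu}t_{\nu})\rho^{*}\nabla_{eq}^{(0)}+\sum_{\nu}t_{\nu}\rho^{*}\nabla_{eq}^{(\nu)}$ on $\mathbb{R}^{p}\times M$ with $G$ acting trivially on the simplex factor, defines the difference form as $\rho'_{*}\phi(\tilde\nabla)$ over $\Delta^{p}$, and derives (1) and (2) from Stokes' theorem via Proposition \ref{projection formula}(2). Your additional care about $d_{eq}$-closedness of $\phi(\widetilde{K}_{eq})$ and compatibility of the equivariant curvature with restriction to faces fills in details the paper leaves implicit, but the route is the same.
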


\begin{proof}
Let $\rho: \mathbb{R}^{p}\times M\to M$ be the natural projection, where $G$ acts on $\mathbb{R}^{p}$ trivially. Then, we define a $G$-equivariant connection $\tilde\nabla_{eq}$ for $\rho^{*}E$ by
$$\tilde\nabla_{eq}=(1-\sum_{\nu=1}^{p}t_{\nu})\rho^{*}\nabla_{eq}^{(0)}+\sum_{\nu=1}^{p}t_{\nu}\rho^{*}\nabla_{eq}^{(\nu)},$$
Letting $\rho^{\prime}:\Delta^{p}\times M\to M$ be the restriction of $\rho$, we get the fiber integration
$$\rho_{*}^{\prime}:\edc{*}{\Delta^{p}\times M}\to\edc{*-p}{M},$$
where $\Delta^{p}$ is the standard $p$-simplex.
And, setting
$$\phi(\nabla_{eq}^{(0)},...,\nabla_{eq}^{(p)})\coloneqq \rho^{\prime}_{*}\phi(\tilde\nabla),$$
we have the desired form satisfying (1), (2) (Use the Stokes theorem and the formula Proposition \ref{projection formula} (2)). 
\end{proof}

Let $\mathcal{U}=\{U_{\alpha}\}_{\alpha\in I}$ be a $G$-invariant open covering of $M$ as in section 1.2. Let $\pi:E\to M$ be a complex vector bundle of rank $l$ and $\phi$ an invariant polynomial homogeneous of degree $k$. For each $\alpha$, we choose a connection $\nabla_{eq}^{(\alpha)}$ for $E|_{U_{\alpha}}$ and for the collection $\nabla_{eq}^{*}=(\nabla_{eq}^{(\alpha)})_{\alpha\in I}$, we define $\phi(\nabla_{eq}^{*})\in\CdR{2k}$ by
$$\phi(\nabla_{eq}^{*})_{\alpha_{0}\cdots\alpha_{p}}\coloneqq \phi(\nabla_{eq}^{(0)},...,\nabla_{eq}^{(p)})\in \edc{2k-p}{U_{\alpha_{0}\cdots\alpha_{p}}}$$

\begin{lem}
In the above situation, we have the followings.
\begin{enumerate}[(1)]
\item $D_{eq}\phi(\nabla_{eq}^{*})=0$
\item  For another collection $\tilde\nabla_{eq}^{*}=(\tilde\nabla_{eq}^{(\alpha)})_{\alpha\in I}$, there exists the element $\psi\in\CdR{2k-1}$ such that
$$\phi(\tilde\nabla_{eq}^{*})-\phi(\nabla_{eq}^{*})=D_{eq}\psi.$$
\end{enumerate}
\end{lem}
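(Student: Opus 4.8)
The plan is as follows. Part~(1) is a direct unwinding of the definitions. Since the $C^{p}(\mathcal{U},\Omega_{G}^{q})$-component of $\phi(\nabla_{eq}^{*})$ sits in bidegree $(p,2k-p)$, the operator $D_{eq}=\delta+(-1)^{p}d_{eq}$ produces a cochain of total degree $2k+1$ whose component in $C^{p+1}(\mathcal{U},\Omega_{G}^{2k-p})$, evaluated on $(\alpha_{0},\dots,\alpha_{p+1})$, is
\begin{align*}
&\bigl(\delta\phi(\nabla_{eq}^{*})\bigr)_{\alpha_{0}\cdots\alpha_{p+1}}+(-1)^{p+1}d_{eq}\,\phi(\nabla_{eq}^{*})_{\alpha_{0}\cdots\alpha_{p+1}}\\
&\qquad=\sum_{\nu=0}^{p+1}(-1)^{\nu}\phi\bigl(\nabla_{eq}^{(0)},\dots,\widehat{\nabla_{eq}^{(\nu)}},\dots,\nabla_{eq}^{(p+1)}\bigr)+(-1)^{p+1}d_{eq}\,\phi\bigl(\nabla_{eq}^{(0)},\dots,\nabla_{eq}^{(p+1)}\bigr),
\end{align*}
which is exactly the left-hand side of the identity in Proposition~\ref{Bott's difference form}(2) applied to the $p{+}2$ equivariant connections $\nabla_{eq}^{(\alpha_{0})},\dots,\nabla_{eq}^{(\alpha_{p+1})}$ restricted to $U_{\alpha_{0}\cdots\alpha_{p+1}}$, hence is $0$. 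As this holds for every $p$, we get $D_{eq}\phi(\nabla_{eq}^{*})=0$.

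For part~(2) I would construct the primitive $\psi$ explicitly out of \emph{mixed} Bott difference forms interpolating between the two collections. Concretely, define $\psi\in\CdR{2k-1}$ by prescribing its component in $C^{p}(\mathcal{U},\Omega_{G}^{2k-p-1})$ to be
$$\psi_{\alpha_{0}\cdots\alpha_{p}}\coloneqq\sum_{i=0}^{p}(-1)^{i}\,\phi\bigl(\nabla_{eq}^{(0)},\dots,\nabla_{eq}^{(i)},\tilde\nabla_{eq}^{(i)},\dots,\tilde\nabla_{eq}^{(p)}\bigr),$$
where in the $i$-th summand the first $i{+}1$ slots carry $\nabla_{eq}^{(\alpha_{0})},\dots,\nabla_{eq}^{(\alpha_{i})}$ and the last $p{-}i{+}1$ slots carry $\tilde\nabla_{eq}^{(\alpha_{i})},\dots,\tilde\nabla_{eq}^{(\alpha_{p})}$, all restricted to $U_{\alpha_{0}\cdots\alpha_{p}}$. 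Each summand is a Bott difference form of $p{+}2$ equivariant connections, so it lies in $\edc{2k-p-1}{U_{\alpha_{0}\cdots\alpha_{p}}}$ and $\psi$ has the right bidegrees; moreover $\psi$ is $G$-equivariant, since a Bott difference form of $G$-equivariant connections is again $G$-equivariant — this is built into Proposition~\ref{Bott's difference form}, whose construction only involves the equivariant characteristic form of an equivariant connection on a pullback bundle and the equivariant fiber integration, both of which respect the $G$-action.

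It then remains to verify $D_{eq}\psi=\phi(\tilde\nabla_{eq}^{*})-\phi(\nabla_{eq}^{*})$. Fix $(\alpha_{0},\dots,\alpha_{p})$; the relevant component of $D_{eq}\psi$ there is $(\delta\psi)_{\alpha_{0}\cdots\alpha_{p}}+(-1)^{p}d_{eq}\psi_{\alpha_{0}\cdots\alpha_{p}}$. Expand $d_{eq}$ of each $(p{+}2)$-entry summand by Proposition~\ref{Bott's difference form}(2) into the alternating sum of its $p{+}2$ ``faces''. The face obtained by deleting $\nabla_{eq}^{(i)}$ from the $i$-th summand equals the one obtained by deleting $\tilde\nabla_{eq}^{(i-1)}$ from the $(i{-}1)$-st summand, and they occur with opposite signs, so they telescope away; the two surviving extreme faces — deleting the first slot of the $i{=}0$ summand and the last slot of the $i{=}p$ summand — give exactly $\phi\bigl(\tilde\nabla_{eq}^{(0)},\dots,\tilde\nabla_{eq}^{(p)}\bigr)=\phi(\tilde\nabla_{eq}^{*})_{\alpha_{0}\cdots\alpha_{p}}$ and $-\phi\bigl(\nabla_{eq}^{(0)},\dots,\nabla_{eq}^{(p)}\bigr)=-\phi(\nabla_{eq}^{*})_{\alpha_{0}\cdots\alpha_{p}}$; and the faces obtained by deleting one of the genuine indices $\alpha_{\nu}$ cancel against $(\delta\psi)_{\alpha_{0}\cdots\alpha_{p}}$, the signs matching by the alternation property Proposition~\ref{Bott's difference form}(1). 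Summing over $p$ gives $\phi(\tilde\nabla_{eq}^{*})-\phi(\nabla_{eq}^{*})=D_{eq}\psi$ in $\CdR{2k}$.

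The only genuinely delicate point is the sign bookkeeping in this last step: one has to keep simultaneous track of the $(-1)^{i}$ in the definition of $\psi$, the position of the deleted slot inside each Bott difference form, the $(-1)^{\nu}$ from $\delta$, and the $(-1)^{p}$ in $D_{eq}$, and then see that everything but the two extreme terms cancels (a check already carried out, in the non-equivariant case, in Suwa~\cite{suwa2016complex}; see also \cite{suwa1998indices}). No new phenomenon arises here, because every ingredient — $d_{eq}$, equivariant fiber integration, the cup product, and Bott difference forms — is compatible with the $G$-action, so the purely combinatorial identity, once established over $\mathbb{C}[\frakg]\otimes\Omega^{*}$, descends automatically to the $G$-invariant part $\Omega_{G}^{*}$.
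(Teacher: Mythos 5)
Your proof is correct and follows essentially the same strategy as the paper: part (1) is Proposition \ref{Bott's difference form}(2) read off componentwise, and part (2) uses the telescoping cochain built from mixed Bott difference forms, exactly as the paper does. One point worth flagging: your primitive carries the alternating sign, $\psi_{\alpha_{0}\cdots\alpha_{p}}=\sum_{i=0}^{p}(-1)^{i}\phi(\nabla_{eq}^{(\alpha_{0})},\dots,\nabla_{eq}^{(\alpha_{i})},\tilde\nabla_{eq}^{(\alpha_{i})},\dots,\tilde\nabla_{eq}^{(\alpha_{p})})$, whereas the paper's printed formula omits the $(-1)^{\nu}$; the sign is in fact needed --- already for $p=1$ the unsigned sum fails to satisfy $D_{eq}\psi=\phi(\tilde\nabla_{eq}^{*})-\phi(\nabla_{eq}^{*})$, because the two faces equal to $\phi(\nabla_{eq}^{(\alpha_{0})},\tilde\nabla_{eq}^{(\alpha_{1})})$ then add instead of cancelling --- so your version is the correct one and agrees with the standard non-equivariant formula in Suwa.
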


\begin{proof}
(1) By direct computations.
(2) Setting
$$\psi=\sum_{\nu=0}^{p}\phi(\nabla_{eq}^{(\alpha_{0})},...,\nabla_{eq}^{(\alpha_{\nu})},\tilde\nabla_{eq}^{(\alpha_{\nu})},...,\tilde\nabla_{eq}^{(\alpha_{p})}),$$
we easily see that $\phi(\tilde\nabla_{eq}^{*})-\phi(\nabla_{eq}^{*})=D_{eq}\psi$.
\end{proof}

\noindent It follows from this lemma that the element $\phi(\nabla_{eq}^{*})$ defines a cohomology class $[\phi(\nabla_{eq}^{*})]\in\CdRc{2k}$ which depends only on $E$ but not on the choice of the collection of connections $\nabla_{eq}^{*}$. Also, from the following theorem, we may naturally regard $[\phi(\nabla_{eq}^{*})]$ as a characteristic class in $\CdRc{*}$.

\begin{thm}
The class $[\phi(\nabla_{eq}^{*})]$ in $\CdRc{*}$ corresponds to the class $\phi(E)_{eq}$ in $H_{G}^{*}(M)$ under the isomorphism of Theorem \ref{natural isomorphism}
\end{thm}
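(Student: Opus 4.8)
The plan is to verify that the natural isomorphism $r \colon H_G^*(M) \simto \CdRc{*}$ of Theorem~\ref{natural isomorphism} carries the Cartan-model characteristic class $\phi(E)_{eq} = [\phi(\nabla_{eq})]$ to the \v{C}ech-de Rham class $[\phi(\nabla_{eq}^*)]$. Since $r$ is induced by restriction of a global form to each $U_\alpha$ — placing it in $C^0(\mathcal{U}, \Omega_G^{2k})$ — the statement amounts to producing, after choosing a single global $G$-invariant connection $\nabla$ on $E$ and setting all the local connections equal to $\nabla|_{U_\alpha}$, an explicit $D_{eq}$-homotopy between $r(\phi(\nabla_{eq}))$ and $\phi(\nabla_{eq}^*)$ for the chosen collection $\nabla_{eq}^* = (\nabla_{eq}^{(\alpha)})_\alpha$ (and then invoking that $[\phi(\nabla_{eq}^*)]$ is independent of the collection, by the preceding lemma).

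First I would reduce to the case where every $\nabla_{eq}^{(\alpha)}$ is the restriction of one globally defined $G$-equivariant connection $\nabla_{eq}$; this is legitimate since a $G$-invariant connection exists globally (average an arbitrary connection over $G$, using compactness) and the \v{C}ech class does not depend on the chosen collection. In that situation the Bott difference forms $\phi(\nabla_{eq}^{(\alpha_0)}, \dots, \nabla_{eq}^{(\alpha_p)})$ on $U_{\alpha_0 \cdots \alpha_p}$ all have equal arguments, so by the alternating property (Proposition~\ref{Bott's difference form}(1)) they vanish for $p \geq 1$. Hence $\phi(\nabla_{eq}^*)$ is concentrated in \v{C}ech-degree $0$, where its component over $U_\alpha$ is exactly $\phi(\nabla_{eq})|_{U_\alpha} = \phi(\kappa)|_{U_\alpha}$ — which is precisely $r(\phi(\nabla_{eq}))$. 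So in fact the two cocycles agree on the nose for this particular collection, and the theorem follows by combining this identity with the independence-of-collection lemma: for a general $\nabla_{eq}^*$, one has $[\phi(\nabla_{eq}^*)] = [\phi(\nabla_{eq}|_{U_\alpha})_\alpha] = r([\phi(\nabla_{eq})]) = r(\phi(E)_{eq})$.

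The one point requiring care — and the main obstacle — is checking that $r$ actually sends the class $[\phi(\nabla_{eq})] \in H_G^*(M)$ into the degree-$0$ part of $\CdRc{*}$ compatibly, i.e.\ that $r(\phi(\nabla_{eq}))$ is genuinely $D_{eq}$-closed in $\CdR{2k}$, not merely $d_{eq}$-closed in $\Omega_G^{2k}(M)$. This is immediate: $\phi(\nabla_{eq})$ is $d_{eq}$-closed by the equivariant Bianchi identity, and being a global form its \v{C}ech coboundary $\delta$ vanishes, so $D_{eq} r(\phi(\nabla_{eq})) = \delta r(\phi(\nabla_{eq})) + d_{eq} r(\phi(\nabla_{eq})) = 0 + r(d_{eq}\phi(\nabla_{eq})) = 0$. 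Then since $r$ is a quasi-isomorphism by Theorem~\ref{natural isomorphism} and commutes with taking classes, the computation above identifies the images as claimed. No genuinely hard estimate or new construction is needed; the argument is the equivariant transcription of the classical fact (Suwa~\cite{suwa2016complex}) that Bott difference forms for coincident connections vanish, together with bookkeeping of the two differentials in the double complex.
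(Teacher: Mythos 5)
Your proposal is correct and takes essentially the same route as the paper: choose a single global $G$-invariant connection, restrict it to each $U_{\alpha}$, observe that the resulting cocycle $\phi(\nabla_{eq}^{*})$ lies in $C^{0}(\mathcal{U},\Omega_{G}^{*})$ (the higher Bott difference forms vanish because their arguments coincide), and conclude by the independence-of-collection lemma. The paper's proof is just a terser statement of exactly this argument, so no further comment is needed.
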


\begin{proof}
Take an equivariant connection $\nabla_{eq}$ on $M$. For each $U_{\alpha}\in\mathcal{U}$, defining $\nabla_{eq}^{(\alpha)}$ to be $\nabla_{eq}|_{U_{\alpha}}$, we see that it is an equivariant connection for $E|_{U_{\alpha}}$. Then for the collection $\nabla_{eq}^{*}=(\nabla_{eq}^{(\alpha)})_{\alpha\in I}$, by definition, 
$$\phi(\nabla_{eq}^{*})\in C^{0}(\mathcal{U},\Omega_{G}^{*}).$$
Thus, $r(\phi(\nabla_{eq}))=\phi(\nabla_{eq}^{*})$ and $r([\phi(\nabla_{eq}^{*})])=\phi(E)_{eq}.$
\end{proof}

As usual,  the total equivariant Chern form is given by
$$c^{*}(\nabla_{eq})\coloneqq \textrm{det}(I_{r}+\frac{\sqrt{-1}}{2\pi}\kappa)=1+c_{eq}^{i}(\nabla_{eq})+\cdots+c_{eq}^{l}(\nabla_{eq})\in\edc{*}{M}$$
and the total equivariant Chern class of $E$ is defined by its cohomology class
$$c_{eq}^{*}(E)=1+c_{eq}^{1}(E)+\cdots+c_{eq}^{l}(E)\in H_{G}^{*}(M).$$
Note that the form $c^{*}(\nabla_{eq})$ and the class $c_{eq}^{*}(E)$ is invertible in $\edc{*}{M}$ and $ H_{G}^{*}(M)$ respectively. 
In the same way as the non equivariant case, the equivariant Chern form (or class) has functoriality with respect to a pull-back and additivity with respect to an exact sequence.

\subsection{Localized equivariant characteristic classes}

Let $M$ be a $G$-manifold and $S$ a $G$-invariant closed set in $M$ and $\pi:E\to M$ a complex $G$-equivariant vector bundle of rank $l$. Letting $U_{0}=M\setminus S$ and $U_{1}$ a $G$-invariant neighborhood of $S$, we consider the $G$-invariant covering $\mathcal{U}=\{U_{0},U_{1}\}$. In what follows, let $H_{G}(\mathcal{U},U_{0})$ denote $H_{G}(M,M\setminus S)$.

Suppose there is some ``geometric object" $\gamma$ on $U_{0}$, to which is associated a class $\mathcal{C}$ of equivariant connections for $E$ on $U_{0}$ such that, for a certain homogeneous invariant polynomial $\phi$,
$$\phi((\nabla_{eq}^{(0)})_{0},...,(\nabla_{eq}^{(k)})_{0})\equiv 0\ \ \textrm{if every}\ (\nabla_{eq}^{(i)})_{0}\ \textrm{belongs}\ \mathcal{C}.$$
A equivariant connection $(\nabla_{eq})_{0}$ for $E$ on $U_{0}$ is said to be \textit{special}, if $(\nabla_{eq})_{0}$ belongs to $\mathcal{C}$ and the polynomial $\phi$ as above is said to be \textit{adapted} to $\gamma$.

\begin{lem}\label{localization lemme}
In the above situation, suppose that $\nabla_{eq}^{0}$ is special and $\phi$ is adapted to $\gamma$. 
The class of 
$$\phi(\nabla_{eq}^{*})=(0,\phi(\nabla_{eq}^{1}),\phi(\nabla_{eq}^{0},\nabla_{eq}^{1}))\in\Omega^{*}_{G}(\mathcal{U},U_{0})$$
is independent of the choice of the special equivariant connection $\nabla_{eq}^{0}$ or the equivariant connection $\nabla_{eq}^{1}$.
\end{lem}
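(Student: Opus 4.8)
The plan is to mimic exactly the non-equivariant argument of Suwa for localized characteristic classes, adapted to the equivariant Čech–de Rham setting, using only the equivariant Bott-difference form of Proposition \ref{Bott's difference form} and its two defining properties. First I would check that the displayed cochain $\phi(\nabla_{eq}^{*})=(0,\phi(\nabla_{eq}^{1}),\phi(\nabla_{eq}^{0},\nabla_{eq}^{1}))$ really lies in $\Omega^{*}_{G}(\mathcal{U},U_{0})$, i.e.\ is $D_{eq}$-closed and has vanishing $U_{0}$-component. The $U_{0}$-component is $0$ by construction; $D_{eq}$-closedness is the computation
\[
D_{eq}\phi(\nabla_{eq}^{*})=\bigl(0,\ d_{eq}\phi(\nabla_{eq}^{1}),\ \phi(\nabla_{eq}^{1})-\phi(\nabla_{eq}^{0})|_{U_{01}}-d_{eq}\phi(\nabla_{eq}^{0},\nabla_{eq}^{1})\bigr),
\]
where $d_{eq}\phi(\nabla_{eq}^{1})=0$ by the equivariant Bianchi identity and the last slot vanishes by property (2) of Proposition \ref{Bott's difference form} with $p=1$, \emph{together with} the fact that $\phi(\nabla_{eq}^{0})\equiv 0$ on $U_{0}$ because $\nabla_{eq}^{0}$ is special and $\phi$ is adapted to $\gamma$ (this is the case $k=0$, i.e.\ no arguments omitted, of the hypothesis). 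So the cochain is a well-defined relative cocycle.

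Next I would show independence of the choice of $\nabla_{eq}^{1}$. Given another equivariant connection $\tilde\nabla_{eq}^{1}$ on $U_{1}$, I would consider the relative cochain
\[
\psi^{1}\ :=\ \bigl(0,\ \phi(\nabla_{eq}^{1},\tilde\nabla_{eq}^{1}),\ -\phi(\nabla_{eq}^{0},\nabla_{eq}^{1},\tilde\nabla_{eq}^{1})\bigr)\in\Omega^{*}_{G}(\mathcal{U},U_{0}),
\]
and check by a direct application of properties (1) and (2) of Proposition \ref{Bott's difference form} (now with $p=2$), plus the vanishing of every Bott form all of whose arguments lie in $\mathcal{C}$, that $D_{eq}\psi^{1}=\phi(\tilde\nabla_{eq}^{*})-\phi(\nabla_{eq}^{*})$ where $\tilde\nabla_{eq}^{*}=(\nabla_{eq}^{0},\tilde\nabla_{eq}^{1})$. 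Symmetrically, for independence of the special connection $\nabla_{eq}^{0}$, given another special $\tilde\nabla_{eq}^{0}$, I would use
\[
\psi^{0}\ :=\ \bigl(0,\ 0,\ \phi(\tilde\nabla_{eq}^{0},\nabla_{eq}^{0},\nabla_{eq}^{1})\bigr),
\]
noting that its $U_{1}$-component must be taken to be $0$ precisely because $\phi(\tilde\nabla_{eq}^{0},\nabla_{eq}^{0})\equiv 0$ on $U_{0}$ (both special), and then $D_{eq}\psi^{0}$ produces the difference of the two relative cocycles, again by property (2) with $p=2$ and the adaptedness vanishing. Combining the two steps handles an arbitrary change $(\nabla_{eq}^{0},\nabla_{eq}^{1})\rightsquigarrow(\tilde\nabla_{eq}^{0},\tilde\nabla_{eq}^{1})$.

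The routine part is the bookkeeping of signs and restrictions in $D_{eq}=\delta+(-1)^{p}d_{eq}$ when expanding $D_{eq}\psi^{i}$; this is mechanical given Proposition \ref{Bott's difference form}. The one genuinely substantive point — and the place where the equivariant setting could in principle differ from the classical one — is that the \emph{adaptedness/vanishing hypothesis} $\phi((\nabla_{eq}^{(0)})_{0},\dots,(\nabla_{eq}^{(k)})_{0})\equiv 0$ for special connections must survive fiber integration over the simplex: since the equivariant Bott-difference form is \emph{defined} as $\rho'_{*}\phi(\tilde\nabla_{eq})$ for the affine-combination connection $\tilde\nabla_{eq}$ on $\Delta^{p}\times U_{0}$, and any affine combination of special connections is again special (as $\mathcal{C}$ is assumed to be a suitable class closed under such combinations — this is implicit in ``a class $\mathcal{C}$ of equivariant connections'' for which the vanishing holds), the integrand $\phi(\tilde\nabla_{eq})$ vanishes identically on $\Delta^{p}\times U_{0}$ and hence so does its fiber integral. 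Once this is in place, the whole argument is formally identical to Suwa's, and no compactness of $G$ or averaging is needed beyond what was already used to build the equivariant connections themselves.
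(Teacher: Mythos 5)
Your proposal is correct and follows essentially the same route as the paper: both reduce the statement to a two-step change of connections and exhibit the difference of cocycles as $D_{eq}$ of a relative cochain built from the triple Bott-difference forms, using the alternating property, identity (2) of Proposition \ref{Bott's difference form}, and the vanishing $\phi(\nabla_{eq}^{0},\tilde\nabla_{eq}^{0})=0$ for two special connections. Your added verification of the cocycle condition and the remark on the class $\mathcal{C}$ are harmless extras (the vanishing for arbitrary tuples of special connections is already part of the hypothesis), and any residual discrepancies with the paper are only sign conventions in the ordering of the arguments of the Bott forms.
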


\begin{proof}
If $\nabla_{eq}^{0}$ and ${\nabla'}_{eq}^{0}$ are both special, by using $\phi(\nabla_{eq}^{0},{\nabla'}_{eq}^{0})=0$ and Proposition \ref{Bott's difference form} , we have
$$(0,\phi(\nabla_{eq}^{1}),\phi({\nabla'}_{eq}^{0},\nabla_{eq}^{1}))-(0,\phi(\nabla_{eq}^{1}),\phi(\nabla_{eq}^{0},\nabla_{eq}^{1}))=D_{eq}(0,0,\phi(\nabla_{eq}^{0},{\nabla'}_{eq}^{0},\nabla_{eq}^{1})).$$
Similarly, for equivariant connections $\nabla_{eq}^{1}$ and ${\nabla'}_{eq}^{1}$ on $U_{1}$,
$$(0,\phi({\nabla'}_{eq}^{1}),\phi(\nabla_{eq}^{0},{\nabla'}_{eq}^{1}))-(0,\phi(\nabla_{eq}^{1}),\phi(\nabla_{eq}^{0},\nabla_{eq}^{1}))=D_{eq}(0,\phi({\nabla'}_{eq}^{1},\nabla_{eq}^{1}),\phi(\nabla_{eq}^{0},\nabla_{eq}^{1},{\nabla'}_{eq}^{1})).$$
\end{proof}

\noindent From this, we may define the following.

\begin{dfn}
If $(\nabla_{eq})_{0}$ is special and $\phi$ (homogeneous of degree $d$) is adapted to $\gamma$, the class $\phi_{S}(E,\gamma)\in H_{G}^{2d}(M,M\setminus S)$ is defined by 
$$\phi_{S}(E,\gamma)\coloneqq [\phi(\nabla_{eq}^{*})]$$
and is called the \textit{localized equivariant characteristic class} of $\phi(E)_{eq}$ at $S$ by $\gamma$.
\end{dfn}

In the following, we consider a geometric object by frames and its localized equivariant Chern class.
Suppose $\pi:E\to M$ is a complex $G$-equivariant vector bundle of rank $l$. Then, it follows from a way of definition of the $G$-equivariant Bott-difference form that for any $k$ equivariant connections $\nabla_{eq}^{(1)},...,\nabla_{eq}^{(k)}$ for $E$,
$$c^{i}(\nabla_{eq}^{(1)},...,\nabla_{eq}^{(k)})\equiv 0\ \ \ \textrm{for}\ \ i\geq l+1$$

\noindent As a consequence, we have the following.

\begin{lem}\label{vanishing theorem by frame}
Let $s^{(r)}=(s_1,...,s_r)$ be an $r$-frame of $E$ on a $G$-invariant open set $U$ in $M$. If $\nabla_{eq}^{(1)},...,\nabla_{eq}^{(k)}$ is $s^{(r)}$-trivial on $U$, then on $U$
$$c^{i}(\nabla_{eq}^{(1)},...,\nabla_{eq}^{(k)})\equiv 0\ \ \ \textrm{for}\ i\geq l-r+1.$$
\end{lem}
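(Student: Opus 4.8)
The plan is to reduce the claim to the already-established vanishing $c^{i}(\nabla_{eq}^{(1)},\dots,\nabla_{eq}^{(k)})\equiv 0$ for $i\geq l+1$, applied not to $E$ itself but to a quotient bundle of rank $l-r$. First I would use the $r$-frame $s^{(r)}=(s_1,\dots,s_r)$ to split $E|_{U}$ equivariantly: since $G$ is compact, averaging over $G$ a Hermitian metric on $E|_{U}$ produces a $G$-invariant metric, and then $F\coloneqq(s_1,\dots,s_r)^{\perp}$ is a $G$-equivariant subbundle of rank $l-r$ with $E|_{U}\simeq \underline{\mathbb{C}}^{r}\oplus F$ as $G$-equivariant bundles, where $\underline{\mathbb{C}}^{r}$ is the trivial subbundle spanned by the frame. (Here one should note the frame $s^{(r)}$ need not itself be $G$-invariant; what matters is only that the span is a well-defined $G$-stable trivial-as-a-bundle summand — in fact for the argument below it is cleanest to work purely with the connection matrices and never invoke a splitting of $E$, as I explain next.)

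The cleanest route avoids the splitting and argues directly with connection matrices, mirroring the proof that $c^{i}(\nabla_{eq}^{(1)},\dots,\nabla_{eq}^{(k)})\equiv 0$ for $i\geq l+1$. Recall that this latter vanishing comes from the construction in Proposition \ref{Bott's difference form}: $c^{i}(\nabla_{eq}^{(1)},\dots,\nabla_{eq}^{(k)})=\rho'_{*}c^{i}(\tilde\nabla_{eq})$ where $\tilde\nabla_{eq}$ is the affine-combination connection on $\Delta^{k-1}\times M$, and $c^{i}(\tilde\nabla_{eq})=0$ for $i>l$ simply because the equivariant curvature matrix $\tilde\kappa$ of $\tilde\nabla_{eq}$ is an $l\times l$ matrix, so its $i$-th elementary symmetric function vanishes. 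Now, being \emph{$s^{(r)}$-trivial} means (by the definition referenced in Lemma \ref{vanishing theorem by frame}) that each $\nabla_{eq}^{(j)}$ has connection matrix, with respect to a local frame extending $s^{(r)}$, whose first $r$ columns vanish; hence the same is true of the affine combination $\tilde\nabla_{eq}$ on $\Delta^{k-1}\times U$. By the explicit formula $(\kappa)$ above, $\tilde\kappa_{ij}=d\tilde\theta_{ij}+\sum_{k}\tilde\theta_{ik}\wedge\tilde\theta_{kj}-\iota_{X}\tilde\theta_{ij}+\tilde\ell_{ji}(X)$; when $\tilde\theta$ has its first $r$ columns zero, every term on the right with $j\leq r$ vanishes provided $\tilde\ell_{ji}(X)=0$ for $j\leq r$. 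This last point is exactly where the frame being an \emph{honest} frame of sections (equivariantly, or at least spanning a $G$-stable subbundle) is used: the infinitesimal action matrix $\ell(X)$ also has its first $r$ columns zero on the span of $s^{(r)}$ when that span is $G$-stable. So $\tilde\kappa$ is block-upper-triangular of the form $\begin{pmatrix}0 & *\\ 0 & \tilde\kappa'\end{pmatrix}$ with $\tilde\kappa'$ of size $(l-r)\times(l-r)$, whence $\det(I+\tfrac{\sqrt{-1}}{2\pi}\tilde\kappa)=\det(I+\tfrac{\sqrt{-1}}{2\pi}\tilde\kappa')$ has no terms of degree $>l-r$. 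Therefore $c^{i}(\tilde\nabla_{eq})=0$ for $i\geq l-r+1$, and applying $\rho'_{*}$ gives $c^{i}(\nabla_{eq}^{(1)},\dots,\nabla_{eq}^{(k)})\equiv 0$ for $i\geq l-r+1$ on $U$.

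The main obstacle is the bookkeeping around the role of the frame $s^{(r)}$: one must be precise about what ``$s^{(r)}$-trivial'' means in the equivariant setting and ensure that the first $r$ columns of the infinitesimal-action matrix $\ell(X)$ vanish, not just those of the connection matrix $\theta$. If the definition of $s^{(r)}$-trivial in the paper's conventions already bundles in the statement that the frame is $G$-equivariant (or spans a $G$-subbundle and the connection restricts to it), this is immediate; otherwise one inserts the compactness-averaging step to replace $s^{(r)}$ by the $G$-stable subbundle it generates and checks the definitions are compatible. Everything else is the block-triangular determinant computation, which is routine and parallels the $i\geq l+1$ case verbatim.
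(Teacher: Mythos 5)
The paper offers no proof of this lemma at all (it is stated as an immediate consequence of the $i\geq l+1$ vanishing), so your block-triangular curvature computation is exactly the argument the author leaves implicit, and its main line is sound: $s^{(r)}$-triviality kills the first $r$ columns of $\tilde\theta$, hence of $d\tilde\theta$, $\tilde\theta\wedge\tilde\theta$ and $\iota_{X}\tilde\theta$, and once the corresponding columns of $\ell(X)$ also vanish, $\det(I+t\tilde\kappa)$ has degree at most $l-r$ in $t$, so $c^{i}(\tilde\nabla_{eq})=0$ for $i\geq l-r+1$ and fiber integration over the simplex preserves this. You are also right that the only delicate point is the $\ell(X)$ term, which has no analogue in the non-equivariant case.

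However, your fallback claim that it suffices for $s^{(r)}$ to span a $G$-stable subbundle is false. $G$-stability of the span only forces $\ell_{ji}(X)=0$ for $j>r$ and $i\le r$; the upper-left $r\times r$ block of $\ell(X)$ survives, so $\det(I+t\tilde\kappa)$ factors as $\det(I+t\ell_{11}(X))\det(I+t\tilde\kappa')$ and can have degree up to $l$ in $t$, leaving $c^{i}$ for $i\ge l-r+1$ nonzero in general. The paper itself furnishes the counterexample: in Theorem \ref{equivariant universal Thom class} the connection $D^{1}_{eq}$ is trivial with respect to the full frame $(s_{1},\dots,s_{l})$ of $\pi^{*}\mathbb{C}^{l}$, whose span is the whole bundle and hence trivially $U(l)$-stable, yet $c^{l}(D^{1}_{eq})=\left(\sqrt{-1}/2\pi\right)^{l}\det X\neq 0$. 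What the lemma actually requires --- and what holds in the application, where $s_{\Delta}$ is a $U(l)$-invariant section --- is that each $s_{i}$ be $G$-invariant, so that $L^{E}_{X}s_{i}=0$ and the first $r$ columns of $\ell(X)$ vanish outright. With that hypothesis made explicit your argument goes through verbatim; presented as an alternative sufficient condition, the $G$-stable-span version would make the statement wrong, so this is the one point you must pin down rather than hedge.
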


\noindent From this, we have the following.

\begin{dfn}
Let $s^{(r)}$ be a local frame on $M\setminus S$. If $\nabla_{eq}^{0}$ is $s^{(r)}$-trivial, by Lemma \ref{vanishing theorem by frame}, 
$$c^{i}(\nabla_{eq}^{*})=(0,c^{i}(\nabla_{eq}^{1}),c^{i}(\nabla_{eq}^{0},\nabla_{eq}^{1}))\ \ \textrm{for}\ i\geq l-r+1$$
and induce the class $[c^{i}(\nabla_{eq}^{*})]\in H^{2i}_{G}(M,M\setminus S)$. Since this class is independent of the choice of $s^{(r)}$-trivial $G$-equivariant connection $\nabla_{eq}^{0}$ on $U_{0}$ and a $G$-equivariant connection $\nabla_{eq}^{1}$ on $U_{1}$ by Lemma \ref{localization lemme}, we denote by
$$c_{S}^{i}(E,s^{(r)})_{eq}\coloneqq [c^{i}(\nabla_{eq}^{*})]$$
and we call it \textit{the localized Chern class} of $c^{i}(E)_{eq}$ by $s^{(r)}$ at $S$.
\end{dfn}

\subsection{Equivariant Thom class via localized Chern class}

Suppose the unitary group $U(l)$ ($\mathfrak{u}(l)$ is the Lie algebra of $U(l)$) acts on $\mathbb{C}^{l}$ naturally. Then 
$$\pi:\mathbb{C}^{l}\to\{0\}$$
 is clearly an $U(l)$-equivariant vector bundle. Setting
 $$W_{0}=\C^l\setminus\{0\},\ \ W_{1}=\C^l,$$
 we have an $U(l)$-invariant covering $\mathcal{W}=\{W_{0},W_{1}\}$.
 We consider the pull-back of $\C^{l}$ by $\pi$, i.e.,
$$\pi^{*}\C^{l}=\{(z_{1},z_{2})\in \C^l\times \C^l\mid \pi(z_{1})=\pi(z_{2})\}=\C^{l}\times\C^{l}$$
$$\varpi:\pi^{*}\C^{l}=\C^{l}\times\C^{l}\to \C^l,$$
where $\varpi$ is the projection to the second factor. From the definition of pull-back, $U(l)$ acts on $\C^l\times\C^l$ diagonaly ($A(z_1,z_2)=(Az_1,Az_2)$) and $\varpi:\pi^{*}\C^{l}=\C^{l}\times\C^{l}\to \C^l$ is an $U(l)$-equivariant vector bundle. Then the diagonal section
$$s_{\Delta}:\C^l\to \pi^{*}\C^l=\C^l\times\C^l,\ \ z\mapsto (z,z)$$
is naturally $U(l)$-invariant frame on $\C^l\setminus\{0\}$. Thus, we may consider the localized Chern class of $c^{l}(\pi^{*}\C^l)_{eq}$ by $s_{\Delta}$, that is,
$$c^{l}(\pi^{*}\C^l,s_{\Delta})_{eq}\in H^{2l}_{U(l)}(\C^l, \C^l\setminus\{0\})$$
This class is represented by the following form
$$(0, c^{l}_{eq}(D_{eq}^{1}), c^{l}_{eq}(D_{eq}^{0},D_{eq}^{1}))\in \Omega^{2l}_{U(l)}(\mathcal{W},W_{0}),$$
where
\begin{itemize}
\item $D_{eq}^{0}$ is an $s_{\Delta}$-trivial $U(l)-$equivariant connection for $\pi^{*}\C^l$ on $W_{0}=\C^l\setminus\{0\},$
\item $D_{eq}^{1}$ is an $U(l)$-equivariant connection for $\pi^{*}\C^l$ on $W_{1}=\C^l.$
\end{itemize}

On the other hand, as a real vector bundle of rank $2l$, we may consider the $U(l)$-equivariant Thom class $\varPsi_{eq}^{\C^l}\in H_{U(l)}^{2l}(\C^l,\C^l\setminus\{0\})$. 

\begin{thm}\label{equivariant universal Thom class}[equivariant universal Thom class]
In the above situation, we have
$$c^{l}(\pi^{*}\C^l,s_{\Delta})_{eq}=\varPsi_{eq}^{\C^l}.$$
\end{thm}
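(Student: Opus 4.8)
The plan is to compare both sides inside $H_{U(l)}^{2l}(\C^l,\C^l\setminus\{0\})$ by exhibiting explicit representative cochains and showing they differ by a $D_{eq}$-coboundary; equivalently, since the relative equivariant \v{C}ech-de Rham cohomology in this degree is one-dimensional (both classes are determined by their image under the Thom isomorphism $\pi_*$ of the previous subsection), it suffices to show that $\pi_*\bigl(c^{l}(\pi^{*}\C^l,s_{\Delta})_{eq}\bigr)=\pi_*\bigl(\varPsi_{eq}^{\C^l}\bigr)=[1]\in H_{U(l)}^{0}(\{0\})$. So the first step is to recall that $\varPsi_{eq}^{\C^l}=T_{\C^l}([1])$ by definition, hence $\pi_*\varPsi_{eq}^{\C^l}=[1]$, and then to reduce the theorem to the single normalization identity $\pi_*\bigl(c^{l}(\pi^{*}\C^l,s_{\Delta})_{eq}\bigr)=[1]$, using Theorem \ref{equivariant Thom form} to pin down that any relative class with this property and with the correct shape $(0,\pi^*\varepsilon_{eq},-\psi_{eq})$ must be $\varPsi_{eq}^{\C^l}$.

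Next I would set up the computation of the representative $(0,c^{l}_{eq}(D_{eq}^{1}),c^{l}_{eq}(D_{eq}^{0},D_{eq}^{1}))$ concretely. For $D_{eq}^{1}$ I take the trivial $U(l)$-equivariant connection on $\pi^{*}\C^l=\C^l\times\C^l\to\C^l$ (connection matrix $\theta=0$), so that by formula $(\kappa)$ the equivariant curvature matrix is $\kappa^{1}_{ij}(X)=\ell_{ji}(X)$, the matrix of the infinitesimal $U(l)$-action, which is (up to sign) the tautological $\mathfrak{u}(l)$-valued linear form $X\mapsto X$. Hence $c^{l}_{eq}(D_{eq}^{1})(X)=\det\bigl(\tfrac{\sqrt{-1}}{2\pi}X\bigr)$ is a pure polynomial in $X$ of degree $l$, i.e.\ a constant $(0$-form$)$ times an element of $\C[\mathfrak{u}(l)]$ — and it is the equivariant Euler form $\varepsilon_{eq}$ of the trivial bundle over $\{0\}$. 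For $D_{eq}^{0}$ I use the $s_\Delta$-trivial connection: since $s_\Delta(z)=(z,z)$ trivializes $\pi^{*}\C^l$ over $\C^l\setminus\{0\}$, and in this frame the infinitesimal action term is modified, one computes the connection matrix $\theta^{0}$ explicitly in terms of $z,\bar z$, $dz$, $d\bar z$ — this is precisely the Bochner–Martinelli-type data — and then the Bott difference form $c^{l}_{eq}(D_{eq}^{0},D_{eq}^{1})=\rho'_{*}c^{l}(\widetilde{\nabla})$ is an explicit $(2l-1)$-form on $\C^l\setminus\{0\}$ (equivariantly, with $\mathfrak{u}(l)$-polynomial coefficients).

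Then I would carry out the fiber integration $\pi_*$ of this relative cochain over the ball/sphere honeycomb system $\{T_0,T_1\}$ adapted to $\mathcal{W}$: by definition $\pi_*(0,c^{l}_{eq}(D_{eq}^{1}),c^{l}_{eq}(D_{eq}^{0},D_{eq}^{1}))=(\pi_1)_*c^{l}_{eq}(D_{eq}^{1})+(\pi_{01})_*c^{l}_{eq}(D_{eq}^{0},D_{eq}^{1})$. Here $\pi_1$ integrates the top Euler form over the unit ball and $\pi_{01}$ integrates the difference form over the unit sphere $S^{2l-1}$. Using Stokes' theorem on the ball $T_1$ together with the cochain relation $D_{eq}(0,c^{l}_{eq}(D_{eq}^{1}),c^{l}_{eq}(D_{eq}^{0},D_{eq}^{1}))=0$ (so that $c^{l}_{eq}(D_{eq}^{1})$ restricted to $W_{01}$ equals $-d_{eq}c^{l}_{eq}(D_{eq}^{0},D_{eq}^{1})$, because $c^{l}_{eq}(D_{eq}^{0})\equiv0$ by $s_\Delta$-triviality and Lemma \ref{vanishing theorem by frame}), the whole fiber integral collapses to an integral of the Bochner–Martinelli kernel over $S^{2l-1}$, whose constant term is the classical normalization $\int_{S^{2l-1}}\beta_{\mathrm{BM}}=1$, and whose positive-degree (in $X$) terms integrate to zero for dimensional reasons. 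This gives $\pi_*\bigl(c^{l}(\pi^{*}\C^l,s_{\Delta})_{eq}\bigr)=[1]$, completing the proof. The main obstacle is the last step: writing down the $s_\Delta$-trivial equivariant connection $D_{eq}^{0}$ in closed form and then genuinely evaluating the sphere integral of the resulting equivariant Bott difference form — one must check not only the classical Bochner–Martinelli normalization but also that no anomalous contributions from the $\iota_X$-terms and the $\ell(X)$-terms survive after fiber integration; this is exactly the explicit computation packaged as Theorem \ref{equivariant Bochner-Martinelli kernel}, and invoking it (or its method) is what makes the argument go through.
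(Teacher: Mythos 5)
Your proposal is correct and follows essentially the same route as the paper: reduce via the Thom isomorphism to the normalization $\pi_{*}(\,\cdot\,)=[1]$, take the frame-trivial $D_{eq}^{1}$ (whose top Chern form is the $0$-form $\det\bigl(\tfrac{\sqrt{-1}}{2\pi}X\bigr)$, so the ball integral vanishes) and the explicit $s_{\Delta}$-trivial $D_{eq}^{0}$, then evaluate the sphere integral of the Bott difference form, whose top-form-degree part is the classical Bochner--Martinelli kernel while the $X$-dependent terms die for degree reasons. Two small corrections: $s_{\Delta}$ is only a $1$-frame (not a trivialization of the rank-$l$ bundle $\pi^{*}\C^{l}$), and the Stokes step on $T_{1}$ is both unnecessary and illegitimate since the primitive $c^{l}(D_{eq}^{0},D_{eq}^{1})$ is not defined at the origin --- but you do not need it, because your form-degree observation already gives $(\pi_{1})_{*}c^{l}(D_{eq}^{1})=0$ directly, exactly as in the paper.
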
 
\begin{proof}
Setting 
$$T_{1}=D^{2l}=\{z\in \C^l\mid \|z\|\leq1\},\ \ T_{0}=\C^l\setminus\textrm{Int}T_{1},$$
we have a honeycomb system $\{T_{0},T_{1}\}$ adapted to $\mathcal{W}$. Note that $T_{01}=-S^{2l-1}.$
By the definition of $\psi_{eq}^{\C^l}$, it suffices to find the equivariant connections $D_{eq}^{0}, D_{eq}^{1}$ satisfying
$$(\pi_{1})_{*}c^{l}(D_{eq}^{1})+(\pi_{01})_{*}c^{l}(D_{eq}^{0},D_{eq}^{1})=1,$$
where $\pi_{1}:T_{1}\to\{0\}$, $\pi_{01}:T_{01}\to\{0\}$. Let sections $s_{1},...,s_{l}$ of $\varpi:\pi^{*}\C^l\to\C^l$ be 
$$s_{i}(z)=(e_{i},z)\ \ (i=1,...,l),$$
where $\{e_{i}\}$ is the standard basis of $\C^l$. Now we define the connection $D_{1}$ for $\pi^{*}\C^l$ on $\C^l$ by
$$D_{1}(\sum_{i=1}^{l}f_{i}s_{i})\coloneqq \sum_{i=1}^{l}df_{i}\otimes s_{i}\ \ \ (\textrm{for}\ f_{i}\in C^{\infty}(\C^l)),$$
which is $s^{l}=(s_{1},...,s_{l})$-trivial. Also, we easily see that $D_{1}$ is a $U(l)$-invariant connection. Thus we may define the equivariant connection $D_{eq}^{1}$ corresponding to $D_{1}$. From the definition of $D_{eq}^{1}$, the form degree of its curvature form is $0$ and $(\pi_{1})_{*}c^{l}(D_{eq}^{1})=0$. 

Next we define the connection $D_{0}$ for $\pi^{*}\C^l$ on $\C^l\setminus\{0\}$ by
$$D_{0}s_{i}=-\frac{\bar{z_{i}}}{\|z\|^2}\sum_{j=1}^{l}dz_j\otimes s_j\ \ (i=1,...,l)$$
For $f_{i}\in C^{\infty}(\C^l\setminus\{0\}) (i=1,...,l),\ \ g\in U(l)$, we have
$$g\cdot D_{0}(\sum_{i=1}^{l}f_{i}s_{i})=\sum_{i=1}^{l}((g\cdot df_{i})\otimes (g\cdot s_{i})+(g\cdot f_{i})(g\cdot D_{0}s_{i}))$$
$$D_{0}(g\cdot (\sum_{i=1}^{l}f_{i}s_{i}))=\sum_{i=1}^{l}((g\cdot df_{i})\otimes (g\cdot s_{i})+(g\cdot f_{i})(D_{0}(g\cdot s_{i}))).$$
Therefore, to show that $D_{0}$ is $U(l)$-invariant connection, it suffices to check 
$$g\cdot (D_{0}s_{i})=D_{0}(g\cdot s_{i}).$$
For $g=(g_{ij})\in U(l)$, directly computing, we have
$$g\cdot\left(-\frac{\bar{z_{i}}}{\|z\|^2}\right)=-\sum_{k=1}^{l}g_{ki}\frac{\bar{z_{k}}}{\|z\|^2}$$
$$g\cdot \textrm{d}z_{j}=\sum_{m=1}^{l}\overline{g_{mj}}dz_{m},\ \ g\cdot s_{j}=\sum_{n=1}^{l}g_{nj}s_n.$$
Thus, we have
\begin{eqnarray*}
g\cdot (D_{0}s_{i})&=&-\sum_{k=1}^{l}g_{ki}\frac{\bar{z_{k}}}{\|z\|^2}\sum_{j=1}^{l}\left\{(\sum_{m=1}^{l}\overline{g_{mj}}dz_{m})\otimes (\sum_{n=1}^{l}g_{nj}s_n)\right\} \\
&=&-\sum_{k=1}^{l}g_{ki}\frac{\bar{z_{k}}}{\|z\|^2}\sum_{m=1}^{l}\sum_{n=1}^{l}\left(\sum_{j=1}^{l}\overline{g_{mj}}g_{nj}\right)(dz_m\otimes s_n) \\
&=&-\sum_{k=1}^{l}g_{ki}\frac{\bar{z_{k}}}{\|z\|^2}\sum_{m=1}^{l}\sum_{n=1}^{l}\delta_{mn}(dz_m\otimes s_n)\\
&=&-\sum_{k=1}^{l}g_{ki}\frac{\bar{z_{k}}}{\|z\|^2}\sum_{j=1}^{l}dz_j\otimes s_j
\end{eqnarray*}
and
$$D_{0}(g\cdot s_i)=D_{0}(\sum_{k=1}^{l}g_{ki}s_k)=\sum_{k=1}^{l}g_{ki}D_{0}s_{k}=-\sum_{k=1}^{l}g_{ki}\frac{\bar{z_{k}}}{\|z\|^2}\sum_{j=1}^{l}dz_j\otimes s_j.$$
So we get $g\cdot (D_{0}s_{i})=D_{0}(g\cdot s_{i})$. Also, for the diagonal section $s_{\Delta}=\sum_{i=1}^{l}z_{i}s_{i}$, we easily see that $D_{0}s_{\Delta}=0$. Hence we have an $s_{\Delta}$-trivial $U(l)-$equivariant connection $D_{eq}^{0}$ corresponding to $D_{0}$ for $\pi^{*}\C^l$ on $W_{0}=\C^l\setminus\{0\}$. The rest of proof is to show that 
\begin{equation}\label{int1}
-\int_{S^{2l-1}}c^{l}(D_{eq}^{0},D_{eq}^{1})=1. 
\end{equation}
The connection matrix $\theta_{0}=(\theta_{ij})$ of $D_{eq}^{0}$ with respect to $(s_{1},...,s_{l})$ is express by 
$$\theta_{ij}=-\frac{\bar{z_{j}}}{\|z\|^2}dz_i,$$
while the connection matrix $\theta_{1}$ of $D_{eq}^{1}$ with respect to $(s_{1},...,s_{l})$ is zero. For $t\in\mathbb{R}$ and the natural projection $\rho:\mathbb{R}\times(\mathbb{C}^{l}\setminus\{0\})\to\mathbb{C}^{l}\setminus\{0\}$, we set
$$\tilde{D}_{eq}=(1-t)\rho^{*}D^{0}_{eq}+t\rho^{*}D^{1}_{eq},$$
and denote $\rho^{*}D^{0}_{eq}, \rho^{*}D^{1}_{eq}$ by $D^{0}_{eq}, D^{1}_{eq}$ for short. Then the connection matrix $\tilde{\theta}$ of $\tilde{D}_{eq}$ with respect to $(s_{1},...,s_{l})$ is given by $\tilde{\theta}=(1-t)\theta_{0}$,  and thus by ($\kappa$) in subsection 2.1, the corresponding equivariant curvature matrix $\tilde{\kappa}$ is given by 
$$\tilde{\kappa}(X)=d\tilde{\theta}+\tilde{\theta}\wedge \tilde{\theta} - \iota_X \tilde{\theta} + \ell(X)$$
for $X=(X_{ij})\in\mathfrak{u}(l)$. 
Recall that $\ell(X)=(\ell_{ij}(X))_{ij}$ is defined by $L_{X}^{E}s_i=\sum_{j=1}^{l}\ell_{ji}(X)s_j$.  
For later use, we rewrite it as 
$$\tilde{\kappa}(X)=-dt\wedge\theta_{0}+\kappa_{t}(X),$$ 
$$\kappa_{t}(X)=(1-t)d\theta_{0}+(1-t)^{2}\theta_{0}\wedge\theta_{0}-(1-t)\iota_{X}\theta_{0}+\ell(X).$$
By the definition of the equivariant Bott-difference form, 
\begin{eqnarray*}
c^{l}(D^{0}_{eq},D^{1}_{eq})&=&\rho^{\prime}_{*}c^{l}(\tilde{\kappa})\\
&=&\left(\frac{\sqrt{-1}}{2\pi}\right)^{l}\rho^{\prime}_{*}\textrm{det}\tilde{\kappa}\\
&=&-\left(\frac{\sqrt{-1}}{2\pi}\right)^l\sum_{j=1}^{l}\int_{0}^{1}\textrm{det}Q_{j}dt,
\end{eqnarray*}
where $Q_j$ is the matrix obtained from $\kappa_{t}$ by replacing the $j$-th column by that of $\theta_{0}$. In the following, we compute $\textrm{det}Q_{j}$. Computing the $(i,j)$-entry of $d\theta_{0}$, $\theta_{0}\wedge\theta_{0}$, $\ell(X)$ and $\iota_{X}\theta_{0}$, we have
$$(d\theta_{0})_{ij}=-\left(\frac{1}{\|z\|^{2}}d\bar{z_{j}}+\bar{z_{j}}d\left(\frac{1}{\|z\|^2}\right)\right)\wedge dz_{i}$$
$$(\theta_{0}\wedge\theta_{0})_{ij}=\frac{1}{\|z\|^{4}}\bar{z_{j}}dz_{i}\wedge(\sum_{k=1}^{l}\bar{z_{k}}dz_{k})$$
$$(\iota_{X}\theta_{0})=-\frac{\bar{z_{j}}}{\|z\|^{2}}\sum_{k=1}^{l}X_{ik}z_{k}.$$
$$\ell(X)_{ij}=X_{ij}.$$
We set the matrices $\tau(X)$ and $\eta(X)$ as follows;
\begin{eqnarray*}\label{bm}
\tau(X)_{ij}&\coloneqq&-\frac{(1-t)}{\|z\|^{2}}d\bar{z}_{j}\wedge dz_{i}+X_{ij} \\
\eta(X)_{ij}&\coloneqq&-(1-t)\bar{z}_{j}d\left(\frac{1}{\|z\|^{2}}\right)\wedge dz_{i}+(1-t)^2(\theta_{0}\wedge\theta_{0})_{ij}+(1-t)(\iota_{X}\theta_{0})_{ij}.
\end{eqnarray*}
Then, $\kappa_{t}(X)=\tau(X)+\eta(X)$. Denoting $k$-th column of $\kappa_{t}(X)$ and $\tau(X), \eta(X)$ by $\kappa_{t}(X)^{(k)}$ and $\tau(X)^{(k)}, \eta(X)^{(k)}$ respectively, the matrix $Q_{j}$ may be expressed as follows;
$$\textrm{det}Q_{j}=\textrm{det}[\kappa_{t}(X)^{(1)},...,\theta_{0}^{(j)},...,\kappa_{t}(X)^{(l)}].$$ We decompose this determinant with respect to the columns $\tau(X)^{(k)}, \eta(X)^{(k)}$ by using multilinearity of determinant. Note that, if more than two columns of $\eta(X)$ appear in the determinant obtained from the decomposed term, the term vanishes. Thus, we have
$$\textrm{det}Q_{j}=\textrm{det}R_{j}+\sum_{k\neq j}\textrm{det}R_{jk},$$
where
$$R_{j}\coloneqq[\tau(X)^{(1)},...,\theta_{0}^{(j)},...,\tau(X)^{(l)}]$$
$$R_{jk}\coloneqq[\tau(X)^{(1)},...,\theta_{0}^{(j)},...,\eta(X)^{(k)},...,\tau(X)^{(l)}].$$
By the definition, we see that $\textrm{det}R_{jk}=-\textrm{det}R_{kj}$. Directly computing, we have
\begin{eqnarray}
c^{l}(D^{0}_{eq},D^{1}_{eq})&=&-\left(\frac{\sqrt{-1}}{2\pi}\right)^{l}\left\{\sum_{j=1}^{l}\int_{0}^{1}\textrm{det}R_{j}dt+\int_{0}^{1}\sum_{j=1}^{l}\sum_{k\neq j}\textrm{det}R_{jk}dt\right\}
\nonumber\\
&=&-\left(\frac{\sqrt{-1}}{2\pi}\right)^{l}\sum_{j=1}^{l}\int_{0}^{1}\textrm{det}R_{j}dt 
\nonumber\\
&=&-C_l\frac{\sum_{j=1}^{l}\overline{\Phi_j(z)}\wedge\Phi(z)}{||z||^{2l}}+\textrm{(terms with $X_{ij}$)},
\end{eqnarray}
where \begin{eqnarray*}
\Phi(z)&=&dz_1\wedge\cdots dz_l \\
\Phi_i(z)&=&(-1)^{i-1}z_i dz_i\wedge\cdots\wedge \widehat{dz_i}\wedge\cdots\wedge dz_l
\end{eqnarray*}
and
$$C_l=(-1)^{\frac{l(l-1)}{2}}\frac{(l-1)!}{(2\pi\sqrt{-1})^l}.$$
Thus, we have $-\int_{S^{2l-1}}c^{l}(D_{eq}^{0},D_{eq}^{1})=1$, since $C_l\frac{\sum_{j=1}^{l}\overline{\Phi_j(z)}\wedge\Phi(z)}{||z||^{2l}}$ coinsides the Bochner-Martinelli kernel $\beta_{l}$ on $\mathbb{C}^{l}$(see \cite{suwa1998indices}).
\end{proof}

\subsection{Explicit formula of universal $U(l)$-equivariant Thom form}

We give an explicit formula of universal $U(l)$-equivariant Thom form 
$$(0, c^{l}_{eq}(D_{eq}^{1}), c^{l}_{eq}(D_{eq}^{0},D_{eq}^{1}))\in \Omega^{2l}_{U(l)}(\mathcal{W},W_{0}).$$
In particular,  higher terms in (\ref{bm}) are precisely determined.  

We provide some notations to simplify a calculation. 
Let $V$ be a complex vector space of dimension $l$ with a  basis $e_{1}, \cdots ,e_{l}$.
For any anticommutative $\mathbb{Z}$-graded algebra $\mathcal{A}$, we consider the algebra $\mathcal{A}\otimes \wedge^{*}V$ with the following wedge product; $(\alpha\otimes\xi)\wedge(\beta\otimes\eta)\coloneqq (\alpha\wedge\beta)\otimes (\xi\wedge\eta)$. 
It is easy to see the following lemma: 

\begin{lem}\label{lemA}\ 
\begin{enumerate}[(1)]
\item Let $\omega_{i}=\sum_{k=1}^{l}\omega_{ik}\otimes e_{k}$, then 
$$\omega_{1}\wedge\cdots\wedge\omega_{l}
=\sum_{\sigma\in\mathfrak{S}_{l}}\, \textrm{sgn}(\sigma)(\omega_{1\sigma(1)}\wedge\cdots\wedge\omega_{l\sigma(l)})\otimes (e_{1}\wedge\cdots\wedge e_{l})$$ 
\item Let $\alpha=\sum_{k=1}^{l}\alpha_{k}\otimes e_{k}$ 
and $\beta=\sum_{k=1}^{l}\beta_{k}\otimes e_{k}\in\mathcal{A}\otimes \wedge^{*}V$ with $\deg(\alpha_{k})=s$ and $\deg(\beta_{k})=t$, then 
$$\alpha\wedge\beta=-(-1)^{st}\beta\wedge\alpha$$
\end{enumerate}
\end{lem}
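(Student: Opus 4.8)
The two assertions are purely formal consequences of the definition of the product on $\mathcal{A}\otimes\wedge^{*}V$, so the plan is simply to expand everything by multilinearity and keep track of the two independent sources of signs: the antisymmetry of the $e_{k}$ in $\wedge^{*}V$ and the graded-commutativity of $\mathcal{A}$. I would note at the outset that the product $(\alpha\otimes\xi)\wedge(\beta\otimes\eta)=(\alpha\wedge\beta)\otimes(\xi\wedge\eta)$ carries no Koszul sign, so in particular no hypothesis on the degrees of the $\omega_{ik}$ is needed in (1).

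For (1), I would write $\omega_{1}\wedge\cdots\wedge\omega_{l}=\sum_{k_{1},\dots,k_{l}}(\omega_{1k_{1}}\wedge\cdots\wedge\omega_{lk_{l}})\otimes(e_{k_{1}}\wedge\cdots\wedge e_{k_{l}})$, the sum ranging over all tuples $(k_{1},\dots,k_{l})\in\{1,\dots,l\}^{l}$. The factor $e_{k_{1}}\wedge\cdots\wedge e_{k_{l}}$ vanishes unless $k_{1},\dots,k_{l}$ are pairwise distinct, i.e. unless $(k_{1},\dots,k_{l})=(\sigma(1),\dots,\sigma(l))$ for a unique $\sigma\in\mathfrak{S}_{l}$, in which case $e_{\sigma(1)}\wedge\cdots\wedge e_{\sigma(l)}=\textrm{sgn}(\sigma)\,e_{1}\wedge\cdots\wedge e_{l}$. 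Substituting this and collecting the surviving terms yields exactly the stated formula.

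For (2), I would expand $\alpha\wedge\beta=\sum_{j,k}(\alpha_{j}\wedge\beta_{k})\otimes(e_{j}\wedge e_{k})$ and, after relabelling the summation indices, $\beta\wedge\alpha=\sum_{j,k}(\beta_{k}\wedge\alpha_{j})\otimes(e_{k}\wedge e_{j})$. Applying $e_{k}\wedge e_{j}=-\,e_{j}\wedge e_{k}$ (which also covers the diagonal terms $j=k$, where both sides vanish) together with the graded-commutativity relation $\beta_{k}\wedge\alpha_{j}=(-1)^{st}\alpha_{j}\wedge\beta_{k}$ in $\mathcal{A}$, one obtains $\beta\wedge\alpha=-(-1)^{st}\alpha\wedge\beta$, and rearranging gives the claim.

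There is no genuine obstacle here; the computation is elementary bookkeeping. The only point requiring a little care is to separate cleanly the sign coming from the exterior algebra $\wedge^{*}V$ from the sign coming from the $\mathbb{Z}$-grading of $\mathcal{A}$, and to remember that the product on $\mathcal{A}\otimes\wedge^{*}V$ is the ``untwisted'' one, so that these are the only two contributions.
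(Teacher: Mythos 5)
Your proof is correct, and it is exactly the elementary expansion the paper has in mind: the lemma is stated in the paper with no proof beyond the remark ``it is easy to see,'' and your bookkeeping of the two sign sources (antisymmetry in $\wedge^{*}V$ versus graded-commutativity in $\mathcal{A}$, with the untwisted product carrying no Koszul sign) supplies precisely the omitted argument. Nothing to add.
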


We write  $[l]:=\{1, 2, \cdots, l\}$. 
If $I$ is a subset of $[l]$, 
we denote by $e_{I}$ the product 
$e_{i_{1}}\wedge\cdots\wedge e_{i_{p}}$ 
where we write 
$I=\{i_{1},i_{2},...,i_{p}\}$ with $i_{1}<i_{2}<\cdots<i_{p}$. 
Denote by $|I|=p$, the cardinality of $I$. 
For $I=\{i_{1},i_{2},...,i_{p}\}$ and $I'=\{i'_{1},i'_{2},...,i'_{p}\}$ in $[l]$, 
we set 
$\epsilon_{(I, I')}:=(-1)^{\sum_{s=1}^p (i_s+i'_s)}$. 
Let $X=[X_{ij}]\in\mathfrak{u}(l)$,  and 
denote by $X_{I, I'}$ the retainer minor of $X$ with respect to $I$ and $I'$: 
$X_{I,I^{\prime}}=\det [X_{i_s i'_t}]_{1\le s, t \le p}$. 
If $1\le k \le l$ and $k \not\in J$, 
we denote by $\epsilon(k,J)$ the sign such that 
$e_{k}\wedge e_{J}=\epsilon(\{k\},J)e_{\{k\}\cup J}$. Put 
$$\gamma_{(k,I,J)}=(-1)^{\frac{|J|(|J|-1)}{2}} \left(\frac{\sqrt{-1}}{2\pi}\right)^{l} |J|! \,\, \epsilon({k},J). $$

\begin{thm}\label{equivariant Bochner-Martinelli kernel}
For $X\in\mathfrak{u}(l)$, we have
$$\chi_{eq}(X)\coloneqq c^{l}(D_{eq}^{1})=\left(\frac{\sqrt{-1}}{2\pi}\right)^{l}\textrm{det}X$$
$$\beta_{eq}(X)\coloneqq c^{l}(D_{eq}^{0},D_{eq}^{1})
=\sum_{k,I,J}\gamma_{(k,I,J)}\sum_{I^{\prime},J^{\prime}}\epsilon_{(I, I')} X_{I,I^{\prime}}\frac{\bar{z}_{k}d\bar{z}_{J}\wedge dz_{J^{\prime}}}{\|z\|^{2(|J|+1)}}$$
where for $1\leq k \leq l$, the sets $I, J$ vary over the subsets of $[l]$ such
that $\{k\}\cup I\cup J$ is a partition of $[l]$, 
 and $I^{\prime}$ and $J^{\prime}$ vary over the subsets of $[l]$ 
 such that $|I|=|I^{\prime}|$ and $I^{\prime}\cup J^{\prime}$ is a partition of $[l]$. 
\end{thm}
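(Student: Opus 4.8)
The first formula $\chi_{eq}(X) = c^l(D_{eq}^1) = (\sqrt{-1}/2\pi)^l \det X$ is immediate: the connection $D_1$ is flat in the ordinary sense ($dz$-degree zero curvature), so by the formula $(\kappa)$ the equivariant curvature matrix of $D_{eq}^1$ is exactly $\ell(X) = X$, and $c^l(D_{eq}^1) = (\sqrt{-1}/2\pi)^l \det \kappa = (\sqrt{-1}/2\pi)^l \det X$. The substance is the second formula. I would start from the expression for $c^l(D_{eq}^0, D_{eq}^1)$ already assembled in the proof of Theorem \ref{equivariant universal Thom class}, namely
$$c^l(D_{eq}^0,D_{eq}^1) = -\left(\tfrac{\sqrt{-1}}{2\pi}\right)^l \sum_{j=1}^l \int_0^1 \det R_j\, dt,$$
where $R_j = [\tau(X)^{(1)}, \ldots, \theta_0^{(j)}, \ldots, \tau(X)^{(l)}]$ and $\tau(X)_{ij} = -\frac{(1-t)}{\|z\|^2} d\bar z_j \wedge dz_i + X_{ij}$. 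The key simplification is that the $\det R_{jk}$ terms cancel pairwise by antisymmetry, so only the $\det R_j$ survive; this is why the higher-order structure is governed entirely by the matrix $\tau(X)$, which neatly splits into a "Bochner–Martinelli part" $-\frac{(1-t)}{\|z\|^2} d\bar z_j \wedge dz_i$ and an "equivariant part" $X_{ij}$.

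\textbf{Key steps.} First I would expand $\det R_j$ by multilinearity over the splitting $\tau(X) = (\text{BM part}) + (\text{$X$ part})$ in each of the $l-1$ columns other than the $j$-th. A term in which a subset $I' \subseteq [l]\setminus\{j\}$ of columns contributes the $X$ part and the complementary subset $J' \cup \{j\}$ (with $J' = [l]\setminus(\{j\}\cup I')$) contributes the BM part: the $j$-th column is always $\theta_0^{(j)}$, which carries $-\bar z_j/\|z\|^2$ times a column of $dz$'s. Using Lemma \ref{lemA}(1) to reorganize the wedge of the $dz$-valued columns into $e_J$-coefficients, and using the retainer-minor expansion of the determinant of the $X$-block, one obtains, for each choice of $(k, I, J, I', J')$ (where $k$ indexes the row pairing via the $\bar z$ factor and $\{k\}\cup I \cup J$ partitions $[l]$), a monomial
$$\epsilon_{(I,I')}\, X_{I,I'}\, \frac{\bar z_k\, d\bar z_J \wedge dz_{J'}}{\|z\|^{2(|J|+1)}}$$
times a combinatorial constant. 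The $t$-integral $\int_0^1 (1-t)^{|J|}\, dt = \frac{1}{|J|+1}$ contributes the denominator-matching factor, and collecting the powers of $\sqrt{-1}/2\pi$, the sign $(-1)^{|J|(|J|-1)/2}$ from reordering the $d\bar z$'s, the factor $|J|!$ from the permutation sum, and the sign $\epsilon(k,J)$ from $e_k \wedge e_J$, gives exactly $\gamma_{(k,I,J)}$. Summing over all admissible index sets reproduces the claimed formula.

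\textbf{Main obstacle.} The hard part will be the bookkeeping of signs: tracking the Koszul signs when commuting the $d\bar z$-columns (odd degree) past one another and past the $dz$-columns via Lemma \ref{lemA}(2), the sign $\epsilon_{(I,I')}$ arising from extracting the retainer minor $X_{I,I'}$ out of the full determinant (which requires moving rows $I$ and columns $I'$ into standard position), and verifying that the residual sign is precisely $(-1)^{\sum(i_s + i'_s)}$. A secondary subtlety is confirming that the "off-diagonal" contributions — terms where two BM columns would have to share a $d\bar z$ index, or where the $e_J$-reorganization forces a repeated basis vector — vanish identically, so that the sum is genuinely over partitions. Once the sign conventions are pinned down consistently (I would fix them once at the start, tracking a single generic term), the rest is a direct, if lengthy, combinatorial identification; no new analytic input beyond the elementary integral $\int_0^1(1-t)^m\,dt$ is needed, which is consistent with the paper's stated philosophy.
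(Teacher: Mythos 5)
Your proposal follows essentially the same route as the paper's own proof: both start from $c^{l}(D_{eq}^{0},D_{eq}^{1})=-\left(\tfrac{\sqrt{-1}}{2\pi}\right)^{l}\sum_{k}\int_{0}^{1}\det R_{k}\,dt$ with the $\det R_{jk}$ terms cancelled by antisymmetry, expand $\det R_{k}$ multilinearly over the splitting of $\tau(X)$ into its Bochner--Martinelli and $X$ parts, reorganize via Lemma \ref{lemA} into retainer minors $X_{I,I'}$, and collect the factor $\int_{0}^{1}(1-t)^{|J|}dt$ together with the signs $(-1)^{|J|(|J|-1)/2}$, $\epsilon(k,J)$, $\epsilon_{(I,I')}$ and the factor $|J|!$ into $\gamma_{(k,I,J)}$. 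The only cosmetic difference is that the paper first substitutes $Y=-\tfrac{\|z\|^{2}}{1-t}X$ to factor out $(-1)^{l}(1-t)^{l-1}/\|z\|^{2l}$ and organizes the expansion by rows rather than columns, which does not change the substance.
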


\begin{proof}
Let $\theta_{1}$ and $\kappa_{1}$ be the connection matrix and the corresponding equivariant curvature matrix with respect to the frame $(s_{1},...,s_{l})$. Since $\theta_{1}=0$, 
$$\kappa_{1}(X)=d\theta_{1}+\theta_{1}\wedge\theta_{1}-\iota_{X}\theta_{1}+\ell(X)=\ell(X)=X.$$
Thus, $c^{l}(D_{eq}^{1})=c^{l}(X)=\left(\frac{\sqrt{-1}}{2\pi}\right)^{l}\textrm{det}X$. Next, we compute $c^{l}(D_{eq}^{0},D_{eq}^{1})$. Set $Y=-\frac{\|z\|^{2}}{1-t}X$. Then
\begin{eqnarray*}
\textrm{det}R_{k}&=&
\det \, [\tau(X)^{(1)},...,\theta_{0}^{(k)},...,\tau(X)^{(l)}]\\
&=&(-1)^{l}\frac{(1-t)^{l-1}}{\|z\|^{2l}} P 
\end{eqnarray*}
with 
\begin{eqnarray*}
P&=&\sum_{\sigma\in\mathfrak{S}_{l}}\, \textrm{sgn}(\sigma)(d\bar{z_1}\wedge dz_{\sigma(1)}+Y_{1\sigma(1)})\wedge\cdots \cdots\wedge\bar{z_k}dz_{\sigma(k)}\wedge\cdots\wedge(d\bar{z_l}\wedge dz_{\sigma(l)}+Y_{l\sigma(l)}) \\
&=&\sum_{\sigma\in\mathfrak{S}_{l}}\, \textrm{sgn}(\sigma)\sum_{I,J}(d\bar{z}_{j_{1}}\wedge dz_{\sigma(j_{1})})\wedge\cdots\wedge(\bar{z}_{k}dz_{\sigma(k)})\wedge\cdots\wedge(d\bar{z}_{j_{q}}\wedge dz_{\sigma(j_{q})})Y_{i_{1}\sigma(i_{1})}\cdots Y_{i_{p}\sigma(i_{p})}\\
&=&\sum_{I,J}\, (-1)^{\frac{|J|(|J|-1)}{2}}\epsilon({k},J)\cdot \bar{z}_{k}d\bar{z}_{J}\cdot \\
&&\qquad \qquad 
\sum_{\sigma\in\mathfrak{S}_{l}}\, \textrm{sgn}(\sigma) dz_{\sigma(j_1)}\wedge\cdots\wedge dz_{\sigma(k)}\wedge \cdots\wedge dz_{\sigma(j_q)}Y_{i_{1}\sigma(i_{1})}\cdots Y_{i_{p}\sigma(i_{p})}, 
\end{eqnarray*}
where $\{k\}$, $I=\{i_{1},i_{2},...,i_{p}\}$ and $J=\{j_{1},j_{2},...,j_{q}\}$ is a partition of $[l]$.
Set 
$$Z=\sum_{i=1}^{l}dz_{i}\otimes e_{i},\ \ Y_{j}=\sum_{i=1}^{l}Y_{ji}\otimes e_{i}.$$
By Lemma \ref{lemA}, we see 
\begin{eqnarray*}
&&\sum_{\sigma\in\mathfrak{S}_{l}}\, 
\textrm{sgn}(\sigma)\, dz_{\sigma(j_1)}\wedge\cdots\wedge dz_{\sigma(k)}\wedge \cdots\wedge dz_{\sigma(j_q)}Y_{i_{1}\sigma(i_{1})}\cdots Y_{i_{p}\sigma(i_{p})}\otimes(e_{1}\wedge\cdots\wedge e_{l})
\\
&=&
Z\wedge\cdots\wedge Y_{i_1}\wedge\cdots\wedge Z \wedge\cdots\wedge Y_{i_p}\wedge\cdots\wedge Z \\
&=&(-1)^m(Y_{i_1}\wedge\cdots\wedge Y_{i_p})\wedge (Z\wedge\cdots\wedge Z) 
\end{eqnarray*}
where $m=\sum_{s=1}^{p}i_{s}-\frac{1}{2}p(p+1)$. 
Note that 
$$Y_{i_1}\wedge\cdots\wedge Y_{i_p}
=\sum_{I^{\prime}}Y_{I,I^{\prime}}\, e_{I^{\prime}}, 
\;\; Z\wedge\cdots\wedge Z=\sum_{J^{\prime}}(q+1)! \, dz_{J^{\prime}}\otimes e_{J^{\prime}},$$
where 
$I'$ runs over subsets of $p$ elements in $[l]$, 
$J'$ runs over subsets of $(l-p)$ elements in $[l]$, and 
$Y_{I,I^{\prime}}$ is a retainer minor of $[Y_{ij}]$ with respect to $I$ and $I^{\prime}$. 
Then 
\begin{eqnarray*}
&&(-1)^m(Y_{i_1}\wedge\cdots\wedge Y_{i_p})\wedge (Z\wedge\cdots\wedge Z) \\
&=&(-1)^{m}\sum_{I^{\prime}} \, Y_{I,I^{\prime}} \sum_{J^{\prime}}\, (q+1)!\, dz_{J^{\prime}}\otimes(e_{I^{\prime}}\wedge e_{J^{\prime}})\\
&=&(q+1)!\sum_{I^\prime, J^\prime} (-1)^{\sum_{s=1}^{p}(i_{s}+i^{\prime}_{s})}Y_{I,I^{\prime}}dz_{J^{\prime}}\otimes(e_{1}\wedge\cdots\wedge e_{l}).
\end{eqnarray*}
Since $Y_{ii'}=-\frac{\|z\|^{2}}{1-t}X_{ii'}$, 
we have 
$$\textrm{det}R_{k}=\sum_{I,J}\epsilon({k},J)(-1)^{\frac{|J|(|J|-1)}{2}+l}(|J|+1)!\sum_{I^{\prime},J^{\prime}}\epsilon_{(I, I')} X_{I,I^{\prime}}\frac{\bar{z}_{k}d\bar{z}_{J}\wedge dz_{J^{\prime}}}{\|z\|^{2(|J|+1)}}(1-t)^{|J|}.$$
Hence, 
\begin{eqnarray*}
c^{l}(D_{eq}^{0},D_{eq}^{1})
&=&-\left(\frac{\sqrt{-1}}{2\pi}\right)^{l}\sum_{k=1}^{l}\int_{0}^{1}\textrm{det}R_{k}\, dt \\
&=&\sum_{k,I,J}\gamma_{(k,I,J)}\sum_{I^{\prime},J^{\prime}} \epsilon_{(I, I')} X_{I,I^{\prime}}\frac{\bar{z}_{k}d\bar{z}_{J}\wedge dz_{J^{\prime}}}{\|z\|^{2(|J|+1)}}.
\end{eqnarray*}
\end{proof}

\begin{ex}\upshape
For small $l$, the equivariant Bochner-Martinelli kernel is computed as follows. 
\begin{enumerate}
\item 
In the case of $l=1$,
$$\beta_{eq}(X)=\frac{\sqrt{-1}}{2\pi}\frac{{\bar{z}}dz}{\|z\|^2}=\frac{\sqrt{-1}}{2\pi}\frac{dz}{z}.$$
This is nothing but the original (non-equivariant) kernel. 
\item 
In the case of $l=2$, for $X=(X_{ij})\in \mathfrak{u}(2)$,
\begin{eqnarray*}
\beta_{eq}(X)&=&\left(\frac{\sqrt{-1}}{2\pi}\right)^{2}\left\{
\frac{{\bar{z}}_{1}d\bar{z}_{2}\wedge dz_{1} \wedge dz_{2}}{\|z\|^4}
-\frac{{\bar{z}}_{2}d\bar{z}_{1}\wedge dz_{1} \wedge dz_{2}}{\|z\|^4}\right. \\
&&\left. +X_{1,1}\frac{{\bar{z}}_{2}dz_{2}}{\|z\|^2}-X_{1,2}\frac{{\bar{z}}_{2}dz_{1}}{\|z\|^2}
-X_{2,1}\frac{{\bar{z}}_{1}dz_{2}}{\|z\|^2}+X_{2,2}\frac{{\bar{z}}_{1}dz_{1}}{\|z\|^2}\right\}.
\end{eqnarray*}
To be more specific, we see the real part of this form: 
Set $z_1=x_1+\sqrt{-1}y_1$, $z_2=x_2+\sqrt{-1}y_2$ and 
$$
X = \left(
    \begin{array}{cc}
     \sqrt{-1}A & B+\sqrt{-1}C \\
-B+\sqrt{-1}C & \sqrt{-1}D
    \end{array}
  \right),$$
where $A$,$B$,$C$,$D$ are real numbers. 
 Then, 
a simple computation shows 
\begin{eqnarray*}
&&Re(\beta_{eq}(X))\\
&&=\frac{1}{2\pi^2\|z\|^4}
(x_1dx_2\wedge dy_1\wedge dy_2 +x_2dx_1\wedge dy_1\wedge dy_2 \\
&&\ \ \ \ \ \ \ \ \ \ \ \ -y_1dx_1\wedge dx_2 \wedge dy_2-y_2dx_1\wedge dx_2\wedge dy_1) \\
&&+\frac{1}{4\pi^2\|z\|^2}
(-Ax_2dy_2+Ay_2dx_2+Bx_1dx_2+By_1dy_2-Bx_2dx_1-By_2dy_1\\
&&\ \ \ \ \ \ \ \ \ \ \ \ +Cx_1dy_2-Cy_1dx_2+Cx_2dy_1-Cy_2dx_1-Dx_1dy_1+Dy_2dx_2).
\end{eqnarray*}
This form coincides with the angular form for $\mathfrak{so}(4)$ 
 of Proposition 4.10 in Paradan-Vergne \cite{paradan2007equivariant}.

\end{enumerate}
\end{ex}

\subsection{Explicit formula of $G$-equivariant Thom form}
 In this subsection, applying the equivariant Chern-Weil map 
\cite{berline1992heat, paradan2007equivariant} 
to Theorem \ref{equivariant universal Thom class}, 
we obtain a formula expressing the equivariant Thom form for general $G$-vector bundles.

\begin{dfn}
Let $M$ be a manifold with a Lie group $G$-action. 
$\alpha\in \Omega^{*}(M)$ is called \textit{horizontal} if $\iota_{X}\alpha=0$ for any $X\in \frakg$. We denote by $\Omega^{*}(M)_{hor}$ the subalgebla formed by the differential form that are horizontal. Also we define the algebra of the basic differential forms as follows; 
$$\Omega^{*}(M)_{basic}\coloneqq (\Omega^{*}(M)_{hor})^{G}$$
\end{dfn}

Let $\pi:P\to B$ be a principal $G$-bundle. And suppose $G$ acts on a manifold $F$. For the associated bundle $\mathcal{F}=P\times_{G}F$, The Chern-Weil map in non-equivariant case gives the following isomorphism;
$$\phi_{\theta}^{F}:\Omega_{G}^{*}(F)\simto \Omega^{*}(\mathcal{F}),$$
where $\theta$ is a connection form of $P$. In more details, for a $G$-equivariant form $\alpha$, $\phi_{\theta}^{F}(\alpha)$ is equal to the projection of $\alpha(\Omega)\in \Omega(P\times F)^{G}$ on the basic space $\Omega(P\times F)_{basic}\simto \Omega(\mathcal{F})$, where $\Omega$ is the curvature of the connection $\theta$.
We give the equivariant version of this construction in the following.

Let $K$ and $G$ be two compact Lie groups and $P$ be a smooth manifold.  We assume that $K\times G$ acts on $P$ as follows; $(k,g)(y)\coloneqq kyg^{-1}$, for $k\in K,\ g\in G$. And $G$ acts on $P$ freely. Then, $B=P/G$ is a manifold provided with a left action of $K$. There is $K$-invariant connection $\theta$ of $P$, since $K$ is compact. Then, for a $K$-invariant connection $\theta$, $K$-equivariant curvature of $P$ is defined as follows;
$$\tilde{\Omega}\coloneqq d_{K}\theta+\frac{1}{2}[\theta\wedge\theta],$$
where $d_{K}$ is $K$-equivariant differential. Using this, we consider the equivariant Chern-Weil map;
$$\phi_{\theta}^{F}:\Omega_{G}^{*}(F)\simto \Omega_{K}^{*}(\mathcal{F}).$$
It is defined as follows. For a $G$-equivatiant form $\alpha$ on $F$, $\phi_{\theta}^{F}(\alpha)$ is equal to the projection of $\alpha(\tilde{\Omega})\in \Omega_{K}^{*}(P\times F)^{G}$ onto the basic space $\Omega_{K}^{*}(P\times F)_{basicG}\simto \Omega_{K}^{*}(\mathcal{F}).$

\begin{prop}\label{cochain map}
The equivariant Chern-Weil map above satisfies the following condition;
$$\phi_{\theta}^{F}\circ d_{G}=d_{K}\circ \phi_{\theta}^{F}.$$
\end{prop}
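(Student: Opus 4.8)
The plan is to verify the intertwining relation $\phi_\theta^F \circ d_G = d_K \circ \phi_\theta^F$ by tracing through the three stages in the definition of $\phi_\theta^F$: substitution of the $K$-equivariant curvature $\tilde\Omega$ into a $G$-equivariant form, the resulting landing in $\Omega_K^*(P\times F)^G$, and the projection onto the $G$-basic subspace. First I would unwind the definition: for $\alpha \in \Omega_G^*(F)$, one forms $\alpha(\tilde\Omega) \in \Omega_K^*(P\times F)^G$ — more precisely, one uses the Weil homomorphism sending the polynomial generators of $\mathbb{C}[\frakg]$ to the components of $\tilde\Omega$ while keeping the differential-form part of $\alpha$ (pulled back to $P\times F$) — and then projects onto $\Omega_K^*(P\times F)_{basicG} \cong \Omega_K^*(\mathcal{F})$. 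So the claim reduces to two separate facts: (i) the Weil-substitution map $w_\theta\colon \Omega_G^*(F) \to \Omega_K^*(P\times F)^G$, $\alpha \mapsto \alpha(\tilde\Omega)$, intertwines $d_G$ on the source with $d_K$ on the target; and (ii) the $G$-basic projection $\Omega_K^*(P\times F)^G \to \Omega_K^*(P\times F)_{basicG}$ commutes with $d_K$.

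For step (ii), the point is standard: $d_K$ preserves both $G$-horizontality and $G$-invariance. Invariance is clear since $d_K$ is built from $G$-invariant operators ($d$ and the contractions $\iota_X$ for $X\in\frakg_K$, which commute with the free $G$-action). Horizontality with respect to $\frakg$ follows from the Cartan formula $\mathcal{L}_Y = d\iota_Y + \iota_Y d$ for $Y\in\frakg$ together with the fact that on the $G$-basic subspace $\mathcal{L}_Y$ vanishes; hence if $\iota_Y\beta = 0$ for all $Y\in\frakg$ and $\beta$ is $G$-invariant, then $\iota_Y(d_K\beta) = \mathcal{L}_Y\beta - d\iota_Y\beta \pm \iota_X\iota_Y(\cdots) = 0$, using that the $K$-contraction $\iota_X$ for $X\in\frakg_K$ and the $G$-contraction $\iota_Y$ anticommute while $Y_{P\times F}$ is tangent to the $G$-orbits on which $K$ acts and $X_{P\times F}$ projects to $X_\mathcal{F}$. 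So the projection is a morphism of complexes, and it is an isomorphism onto $\Omega_K^*(\mathcal{F})$ by the usual identification of $G$-basic forms on the total space with forms downstairs (this is exactly the $K$-equivariant refinement of the non-equivariant statement already invoked in the excerpt).

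The heart of the matter is step (i), which is the equivariant Weil homomorphism property. The key input is the equivariant Bianchi identity for $\tilde\Omega$, namely $d_K\tilde\Omega = [\theta,\tilde\Omega]$ (equivalently $d_K\tilde\Omega + [\theta,\tilde\Omega] = 0$ with appropriate signs), which follows directly from $\tilde\Omega = d_K\theta + \tfrac12[\theta\wedge\theta]$, from $d_K^2 = -\mathcal{L}_X$, and from the structure equation — this is the equivariant analogue of the Bianchi identity proved earlier in the paper in the Chern-Weil section. Given this, one computes $d_K(\alpha(\tilde\Omega))$ by the chain rule: the differential hits both the form-part of $\alpha$ (pulled back from $F$, on which $d_K$ restricts appropriately) and the curvature entries via $\tilde\Omega \mapsto d_K\tilde\Omega = [\theta,\tilde\Omega]$. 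Because $\alpha$ is $G$-equivariant, the derivative-in-$\frakg$ term produced by differentiating $\alpha$ at $\tilde\Omega$ in the direction $[\theta,\tilde\Omega]$ is governed by the infinitesimal $G$-invariance of $\alpha$ (the relation $\alpha(\Ad_g X) = g\cdot\alpha(X)$ differentiated), and this exactly cancels against the contraction terms $\iota_X$ appearing in $d_G$ and in $d_K$ after substitution. Carrying out this bookkeeping — which is the main obstacle, since it requires care with the placement of the connection form $\theta$, the signs coming from the $\mathbb{Z}$-grading, and the precise meaning of "derivative of $\alpha$ in the polynomial variable" — yields $d_K(w_\theta \alpha) = w_\theta(d_G \alpha)$ modulo terms that are $G$-horizontal-trivial (i.e. involve $\theta$ and hence die under the basic projection). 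Composing with step (ii) then gives $\phi_\theta^F(d_G\alpha) = d_K(\phi_\theta^F\alpha)$, as claimed. I expect the $\theta$-dependent terms to be the trickiest point: they do not individually vanish at the level of $\Omega_K^*(P\times F)^G$, and one must argue they lie in the kernel of the basic projection, which is why the proof genuinely needs the projection step rather than an identity already on $P\times F$.
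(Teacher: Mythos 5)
The paper itself states this proposition without proof, deferring to the references cited at the head of the subsection (Berline--Getzler--Vergne, Paradan--Vergne), so your proposal has to be judged against the standard argument rather than against anything in the text. Your overall strategy --- substitute the $K$-equivariant curvature $\tilde\Omega$, invoke the equivariant Bianchi identity, and let the basic projection kill the $\theta$-dependent error terms --- is the right circle of ideas, and step (i), though only sketched, is where the genuine content of the equivariant Weil homomorphism lives.

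There is, however, a real gap in step (ii) and in how you combine it with (i). What your Cartan-formula computation actually establishes is that $d_K$ preserves the $G$-basic subcomplex, i.e.\ that the \emph{inclusion} $\Omega_K^*(P\times F)_{basicG}\hookrightarrow\Omega_K^*(P\times F)^G$ is a map of complexes. It does not establish that the \emph{projection} $h:\Omega_K^*(P\times F)^G\to\Omega_K^*(P\times F)_{basicG}$ commutes with $d_K$, and that statement is false: already for $K$ trivial, $F$ a point and $P=S^3\to S^2$ the Hopf bundle with $G=S^1$, one has $h(\theta)=0$ while $h(d\theta)=\Omega\neq 0$. Concretely, write $\alpha(\tilde\Omega)=h(\alpha(\tilde\Omega))+R$ with $R\in\ker h$ (note $R\neq 0$ in general, since the forms pulled back from $F$ are not $G$-horizontal for the diagonal action). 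Granting your (i) in the form $d_K(\alpha(\tilde\Omega))=(d_G\alpha)(\tilde\Omega)+S$ with $S\in\ker h$, a short computation using the $d_K$-stability of the basic subcomplex gives
$$d_K\phi_{\theta}^{F}(\alpha)-\phi_{\theta}^{F}(d_G\alpha)=-h(d_K R),$$
and $\ker h$ is not $d_K$-stable (again because $d\theta$ contains the curvature). So (i)+(ii) do not close the argument: the remaining, and in fact central, task is to prove $h(d_K R)=0$ for this particular $R$, which is exactly the cancellation between the curvature terms produced by differentiating the $\theta$'s hidden in $R$ and the chain-rule/contraction terms you describe in (i). The clean way to package all of this --- and the route the cited references take --- is to work in the Weil model: the map sending the Weil-algebra generators to the connection and $K$-equivariant curvature components is a $K$-equivariant chain map into $\Omega_K^*(P\times F)$ \emph{before} any projection (by the structure equation and the Bianchi identity), it restricts to the $G$-basic subcomplexes, and the Mathai--Quillen isomorphism identifies the basic Weil model with the Cartan model $\Omega_G^*(F)$; the horizontal projection $h$ only appears when one unwinds this composite, so the problematic commutation never has to be addressed on its own.
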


We construct the explicit formulas of $G$-equivariant Thom form in the following. First, we consider a $G$-equivariant vector bundle $\pi:E\to M$ and take a $G$-invariant metric for $E$. Then, for any $x\in M$, set $P_{x}=\{\xi:\mathbb{C}^{l}\to E_{x}:\textrm{isometry}\}$ and $P=\cup_{x\in M}P_{x}$ is naturally $U(l)$-equivariant $G$-principal bundle. The above argument applying for this, we get the following Chern-Weil maps;
$$\phi_{\theta}^{\mathbb{C}^{l}}:\Omega_{U(l)}^{*}(\mathbb{C}^{l})\simto \Omega_{G}^{*}(E)$$
$$\phi_{\theta}^{\mathbb{C}^{l}\setminus\{0\}}:\Omega_{U(l)}^{*}(\mathbb{C}^{l}\setminus\{0\})\simto \Omega_{G}^{*}(E\setminus \Sigma)$$
By using this, we may give the $G$-equivariant Thom form as follows;
$$(0, \phi_{\theta}^{\mathbb{C}^{l}}c^{l}_{eq}(D_{eq}^{1}), \phi_{\theta}^{\mathbb{C}^{l}\setminus\{0\}}c^{l}_{eq}(D_{eq}^{0},D_{eq}^{1}))\in \Omega^{2l}_{G}(\mathcal{W},W_{0})$$
It follow from Proposition \ref{cochain map} that this form is closed. Then, we denote by $c_{\Sigma}^{l}({\pi^{*}E,s_{\Delta}})_{eq}$ the class of this form, where $s_{\Delta}:E\to \pi^{*}E$ is the diagonal section and $\Sigma$ is the zero section of $E$. It is not difficult to show that the equivariant fiber integration is compatible with the equivariant Chern-Weil map. Thus, we have the following formula: 

\begin{thm}\label{equivariant Thom class}
In the above situation, we have
$$\varPsi_{eq}^{E}=c_{\Sigma}^{l}({\pi^{*}E,s_{\Delta}})_{eq},$$
where $\varPsi_{eq}^{E}$ is the $G$-equivariant Thom class for $E$.
\end{thm}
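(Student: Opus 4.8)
The plan is to reduce the general statement to the universal case already handled in Theorem~\ref{equivariant universal Thom class} by pushing that identity through the equivariant Chern-Weil map $\phi_{\theta}$, and then to check that both sides of the claimed equality are \emph{characterized} by the defining property of the equivariant Thom class, namely that their fiber integration over $\mathcal{W}$ equals $1$. First I would recall the setup: $P=\bigcup_{x\in M}P_{x}$ is a $U(l)$-equivariant $G$-principal bundle over $M$ with a $K=U(l)$-invariant connection $\theta$, and the two Chern-Weil isomorphisms $\phi_{\theta}^{\mathbb{C}^{l}}$ and $\phi_{\theta}^{\mathbb{C}^{l}\setminus\{0\}}$ intertwine $d_{U(l)}$ with $d_{G}$ by Proposition~\ref{cochain map}. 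Since the covering $\mathcal{W}=\{W_{0},W_{1}\}$ of $E$ is the image under the associated-bundle construction of the universal covering $\{W_{0},W_{1}\}$ of $\mathbb{C}^{l}$, applying $\phi_{\theta}$ componentwise to the universal relative cocycle $(0, c^{l}_{eq}(D_{eq}^{1}), c^{l}_{eq}(D_{eq}^{0},D_{eq}^{1}))$ yields the relative cocycle $(0, \phi_{\theta}^{\mathbb{C}^{l}}c^{l}_{eq}(D_{eq}^{1}), \phi_{\theta}^{\mathbb{C}^{l}\setminus\{0\}}c^{l}_{eq}(D_{eq}^{0},D_{eq}^{1}))\in\Omega^{2l}_{G}(\mathcal{W},W_{0})$, whose class is by definition $c_{\Sigma}^{l}(\pi^{*}E,s_{\Delta})_{eq}$; that it is $D_{eq}$-closed follows from Proposition~\ref{cochain map} together with $D_{eq}$-closedness of the universal cocycle.

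Next I would invoke the characterization of the equivariant Thom class established in the discussion preceding Theorem~\ref{equivariant Thom form}: $\varPsi^{E}_{eq}$ is the unique class in $H^{l}_{G}(\mathcal{W},W_{0})=H^{2l}_{G}(\mathcal{W},W_{0})$ with $\pi_{*}\varPsi^{E}_{eq}=[1]\in H^{0}_{G}(M)$, since $\pi_{*}:H^{p}_{G}(\mathcal{W},W_{0})\to H^{p-l}_{G}(M)$ is an isomorphism (the equivariant Thom isomorphism). So it suffices to prove
$$\pi_{*}\bigl(0,\ \phi_{\theta}^{\mathbb{C}^{l}}c^{l}_{eq}(D_{eq}^{1}),\ \phi_{\theta}^{\mathbb{C}^{l}\setminus\{0\}}c^{l}_{eq}(D_{eq}^{0},D_{eq}^{1})\bigr)=1\in\Omega^{0}_{G}(M).$$
By definition of the relative equivariant fiber integration this equals $(\pi_{1})_{*}\phi_{\theta}^{\mathbb{C}^{l}}c^{l}_{eq}(D_{eq}^{1})+(\pi_{01})_{*}\phi_{\theta}^{\mathbb{C}^{l}\setminus\{0\}}c^{l}_{eq}(D_{eq}^{0},D_{eq}^{1})$. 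Here I would use the compatibility of the equivariant fiber integration with the equivariant Chern-Weil map (the assertion made just before the theorem statement): fiberwise, integration over the ball bundle $T_{1}$ and the sphere bundle $T_{01}$ of $E$ corresponds, after trivializing by a local isometry frame $\xi\in P_{x}$, to integration over $D^{2l}\subset\mathbb{C}^{l}$ and $S^{2l-1}\subset\mathbb{C}^{l}$, and $\phi_{\theta}$ commutes with this because the connection $\theta$ splits $TP$ fiberwise-isometrically. Consequently the left-hand side equals $\phi_{\theta}$ applied to the function $(\pi_{1})_{*}c^{l}_{eq}(D_{eq}^{1})+(\pi_{01})_{*}c^{l}_{eq}(D_{eq}^{0},D_{eq}^{1})$ on $\mathbb{C}^{l}$; but this function was shown to equal $1$ in the course of the proof of Theorem~\ref{equivariant universal Thom class} (that is exactly the identity $(\pi_{1})_{*}c^{l}(D_{eq}^{1})+(\pi_{01})_{*}c^{l}(D_{eq}^{0},D_{eq}^{1})=1$, which reduced to $-\int_{S^{2l-1}}c^{l}(D_{eq}^{0},D_{eq}^{1})=1$ via the Bochner-Martinelli kernel), and $\phi_{\theta}(1)=1$.

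The main obstacle I expect is making precise the claim that ``the equivariant fiber integration is compatible with the equivariant Chern-Weil map,'' which the excerpt states without proof. The delicate point is that $\phi_{\theta}$ is defined as projection onto the basic subcomplex $\Omega^{*}_{K}(P\times F)_{basic\,G}\simto\Omega^{*}_{K}(\mathcal{F})$ after substituting the curvature $\tilde{\Omega}$, and one must verify that fiber integration along $F$ (here $F=\mathbb{C}^{l}$, ball or sphere) commutes with this substitution-and-projection — equivalently that on $P\times F$ one can integrate over the $F$-direction before or after applying $\phi_{\theta}$ with the same result. This follows because the $G$-horizontal distribution determined by $\theta$ is transverse to the $F$-fibers and $G$-equivariant, so fiber integration over $F$ preserves horizontality and $G$-invariance; but writing this out carefully, in particular handling the boundary sphere bundle and the sign conventions of the honeycomb system, is the technical heart of the argument. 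Everything else is a formal consequence of Proposition~\ref{cochain map}, the naturality of the relative \v{C}ech-de Rham complex under the associated-bundle construction, and the already-established universal computation.
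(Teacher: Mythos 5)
Your proposal follows essentially the same route as the paper, which states the theorem without a formal proof and relies on exactly the ingredients you assemble: applying the equivariant Chern--Weil map $\phi_{\theta}$ componentwise to the universal cocycle of Theorem~\ref{equivariant universal Thom class}, using Proposition~\ref{cochain map} for closedness, and invoking the asserted compatibility of equivariant fiber integration with $\phi_{\theta}$ together with the characterization of $\varPsi^{E}_{eq}$ via the Thom isomorphism $\pi_{*}$. You also correctly identify the one step the paper leaves unproven (``it is not difficult to show that the equivariant fiber integration is compatible with the equivariant Chern--Weil map'') as the genuine technical content, so your account is, if anything, more honest about where the work lies.
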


\section{Equivariant Riemann-Roch Theorem}

In this last section, we show 
 a version of equivariant Riemann-Roch theorem in our  setting. 
Indeed, it is entirely parallel to the description in non-equivariant case (cf. \cite{brasselet2009vector}\cite{suwa2000characteristic}). 

\subsection{Chern character and Todd class}
Let $G$ be a compact manifold and $E\to M$ be a $G$-equivariant vector bundle of rank $l$.
For a $G$-equivariant connection$\nabla_{eq}$ for $E$, let $K_{eq}$ denote its curvature and set $A=({\sqrt{-1}}/{2\pi})K_{eq}$.

\

For $G$-equivariant connection $\nabla_{eq}$, the equivariant Chern character form and Todd form is defined as follows;
$$\textrm{ch}^{*}(\nabla_{eq})\coloneqq \textrm{tr}(e^{A})$$
$$\textrm{td}(\nabla_{eq})\coloneqq \textrm{det}\left(\frac{A}{I-e^{-A}}\right)$$
Note that $I-e^{-A}$ is divisible by $A$ and the result is invertible so that
$$\textrm{td}^{-1}(\nabla_{eq})=\textrm{det}\left(\frac{I-e^{-A}}{A}\right)$$
In the same way of the Chern form, we may easily show that these form is closed and the classes of these form is independent of the choice of equivariant connections. Note that the constant term in $\textrm{td}(\nabla_{eq})$ is $1$ and that $\textrm{td}(\nabla_{eq})$ can be expressed as a series in $c^{i}(\nabla_{eq})$. Then, we have the following formula;
$$\sum_{i=0}^{l}(-1)^{i}\textrm{ch}^{*}({\bigwedge}^{i}\nabla_{eq}^{*})=c^{l}(\nabla_{eq})\cdot \textrm{td}(\nabla_{eq})^{-1},$$
where $\nabla_{eq}^{*}$ denotes the connection for $E^{*}$ dual to $\nabla_{eq}$ and $\bigwedge^{i}\nabla_{eq}^{*}$ the connection for $\bigwedge^{i}E^{*}$ induced by $\nabla_{eq}^{*}$. Here we set $\bigwedge^{0}E=\mathbb{C}\times M$ and $\bigwedge^{0}\nabla_{eq}^{*}=d_{eq}$, the twisted de Rham differential.

\subsection{Equivariant characteristic forms for virtual bundles}

Let $E_i$ $(i=0,...,q)$ be $G$-equivariant complex vector bundles. We may consider the virtual bundle
$\xi=\sum_{i=0}^{q}(-1)^{i}E_i$ (as an element of $K$-group of $G$-equivariant vector bundles on $M$) and a family of equivariant connections $\nabla^{\bullet}_{eq}=(\nabla^{(0)}_{eq},...,\nabla^{(q)}_{eq})$, where $\nabla^{(0)}_{eq}$ is a $G$-equivariant connection for $E_i$. We set 
$$c^{*}(\nabla_{eq}^{\bullet})=\prod_{i=0}^{q}c^{*}(\nabla_{eq}^{(i)})^{\epsilon(i)}\ \ \ \textrm{and}\ \ \  \textrm{ch}^{*}(\nabla_{eq}^{\bullet})=\sum_{i=0}^{q}(-1)^{i}\textrm{ch}(\nabla_{eq}^{(i)}),$$
where $\epsilon(i)=(-1)^{i}$. In general, for a symmetric series, we may define a form $\phi(\nabla_{eq}^{\bullet})$. It is closed and its class $\phi(\xi)$ is in $H_{G}^{*}(M)$. For two families of connections $(\nabla^{\bullet}_{eq})_{\nu}=((\nabla^{(0)}_{eq})_{\nu},...,(\nabla^{(q)}_{eq})_{\nu})$, $\nu=1,2$, the same argument for non-virtual version may define the Bott difference form $\phi((\nabla^{\bullet}_{eq})_0,(\nabla^{\bullet}_{eq})_1)$. From this, in the same way of non-virtual version, we easily see that $\phi(\xi)=[\phi(\nabla_{eq}^{\bullet})]$ is independent of the choice of a families of connections. 

We may also define the equivariant characteristic classes for virtual bundle in the equivariant \v{C}ech-de Rham cohomology as in section 2.1. It is sufficient to consider coverings $\mathcal{U}$ consisting of two open sets $U_0$ and $U_1$ for the sake of argument in the following. Then, taking a family of connections $(\nabla^{\bullet}_{eq})_{\nu}=((\nabla^{(0)}_{eq})_{\nu},...,(\nabla^{(q)}_{eq})_{\nu})$ for $\xi$ on each $U_{\nu}$, $\nu=0,1$, for the collection $(\nabla_{eq}^{\bullet})_{\star}=((\nabla_{eq}^{\bullet})_0, (\nabla_{eq}^{\bullet})_1)$, a cochain $\phi((\nabla_{eq}^{\bullet})_{\star})$ in $\Omega_{G}^{*}(\mathcal{U})$ is defined as follows;
$$\phi^{i}((\nabla_{eq}^{\bullet})_{\star})=(\phi^{i}((\nabla_{eq}^{\bullet})_0), \phi^{i}((\nabla_{eq}^{\bullet})_1), \phi^{i}((\nabla^{\bullet}_{eq})_0,(\nabla^{\bullet}_{eq})_1))$$
It is in fact a cocycle and defines a class $[\phi^{i}((\nabla_{eq}^{\bullet})_{\star})]$ in $H_{G}^{*}(\mathcal{U})$. It does not depend on the choice of the collection of families of connections $(\nabla_{eq}^{\bullet})_{\star}$ and corresponds to the class $\phi(\xi)$ under the isomorphism $H_{G}^{*}(\mathcal{U})\simeq H_{G}^{*}(M)$.

\subsection{Equivariant Riemann-Roch Theorem}

Let $M$ be as above, $s$ a $G$-invariant section in $M$. Let $S$ denote the zero set of $s$ (note that $S$ is also $G$-invariant). Letting $U_{0}=M\setminus S$ and $U_1$ a $G$-invariant neighborhood of $S$, we consider the $G$-invariant covering $\mathcal{U}=\{U_0,U_1\}$. We set $\lambda_{E^{*}}=\sum_{i=0}^{l}(-1)^{i}\bigwedge^{i}E^{*}$. Let $\nabla_0$ be an $s$-trivial $G$-equivariant connection for $E$ on $U_0$ and $\nabla_1$ an arbitrary $G$-equivariant connection for $E$ on $U_1$. Consider the Koszul complex associated to $(E,s)$ (for more details, see \cite{suwa2016complex});
$$\xymatrix{
0 \ar[r] & {\bigwedge}^{l}E^{*} \ar[r]^{d_{s}} & \cdots \ar[r]^{d_{s}} & {\bigwedge}^{1}E^{*} \ar[r]^{d_{s}} & {\bigwedge}^{0}E^{*} \ar[r] & 0
}$$
which is exact on $U_0$. It is easy to show that the family ${\bigwedge}^{\bullet}(\nabla_{eq}^{*})_{0}=({\bigwedge}^{l}(\nabla_{eq}^{*})_{0},...,{\bigwedge}^{0}(\nabla_{eq}^{*})_{0})$ is compatible with the above sequence on $U_0$. The fact that $ch^{*}({\bigwedge}^{\bullet}(\nabla_{eq}^{*})_{0})=0$ follows from this. Then, we have the localization $\textrm{ch}_{S}^{*}(\lambda_{E^{*}},s)_{eq}$ in $H_{G}^{2i}(M,M\setminus S;\mathbb{C})$, which is represented by the cocycle
$$\textrm{ch}^{*}({\bigwedge}^{\bullet}(\nabla_{eq}^{*})_{\star})=(0, \textrm{ch}^{*}({\bigwedge}^{\bullet}(\nabla_{eq}^{*})_{1}), \textrm{ch}^{*}({\bigwedge}^{\bullet}(\nabla_{eq}^{*})_{0},{\bigwedge}^{\bullet}(\nabla_{eq}^{*})_{1}))$$
We also have the inverse equivariant Todd class $\textrm{td}^{-1}(E)_{eq}$, which is represented by the cocycle
$$\textrm{td}^{-1}((\nabla_{eq})_{\star})=(\textrm{td}^{-1}((\nabla_{eq})_{0}), \textrm{td}^{-1}((\nabla_{eq})_{1}), \textrm{td}^{-1}((\nabla_{eq})_{0},(\nabla_{eq})_{1}))$$
We give some definitions for the theorem in the following. Let $\rho:\mathbb{R}\times U_{01}\to U_{01}$ be the projection and we consider the connection ${\tilde{\nabla}}_{eq}=(1-t)\rho^{*}(\nabla_{eq})_{0}+t\rho^{*}(\nabla_{eq})_{1}$ for $\rho^{*}E$. Let ${\bigwedge}^{\bullet}(\nabla^{*}_{eq})_{\nu}$ denote the family of connections $({\bigwedge}^{l}(\nabla_{eq}^{*})_{\nu},...,{\bigwedge}^{0}(\nabla_{eq}^{*})_{\nu})$ on $U_{\nu}$, for $\nu=0,1$. Also we denote by ${\bigwedge}^{\bullet}({\tilde{\nabla}}^{*}_{eq})$ the family $({\bigwedge}^{l}({\tilde{\nabla}}_{eq}^{*}),...,{\bigwedge}^{0}({\tilde{\nabla}}_{eq}^{*})_{\nu})$. Let $\rho^{\prime}:[0,1]\times U_{01}\to U_{01}$ be the restriction of $\rho$. 

\begin{thm}
In the above situation, we have
$$\textrm{ch}^{*}({\bigwedge}^{\bullet}(\nabla_{eq}^{*})_{\star})=c^{l}((\nabla_{eq})_{\star}))\smile \textrm{td}^{-1}((\nabla_{eq})_{\star}) + D_{eq}\tau$$
where $\tau=(0,0,\tau_{01}), \tau=\rho^{\prime}_{*}(c^{l}(\rho^{*}(\nabla_{eq})_{0}), {\tilde{\nabla}}_{eq})\cdot d_{eq}\textrm{td}^{-1}(\rho^{*}(\nabla_{eq})_{1}), {\tilde{\nabla}}_{eq})$.
\end{thm}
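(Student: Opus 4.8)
The plan is to check the claimed cochain identity component by component on the covering $\mathcal{U}=\{U_0,U_1\}$. Since $\tau=(0,0,\tau_{01})$ we have $D_{eq}\tau=(0,0,-d_{eq}\tau_{01})$, so only the $U_{01}$-component of $D_{eq}\tau$ is nonzero. The $U_0$-component of the left-hand cochain is $0$ by construction, and on the right $c^{l}((\nabla_{eq})_0)=0$ since $\nabla_0$ is $s$-trivial (Lemma \ref{vanishing theorem by frame}, with $s$ a $1$-frame on $U_0$), so the $U_0$-component of the cup product vanishes as well. For the $U_1$-component, unwinding the cup product gives $c^{l}((\nabla_{eq})_1)\cdot\textrm{td}^{-1}((\nabla_{eq})_1)$, and the required equality $\textrm{ch}^{*}({\bigwedge}^{\bullet}(\nabla_{eq}^{*})_1)=c^{l}((\nabla_{eq})_1)\cdot\textrm{td}^{-1}((\nabla_{eq})_1)$ is exactly the pointwise equivariant Riemann--Roch identity $\sum_{i}(-1)^{i}\textrm{ch}^{*}({\bigwedge}^{i}\nabla_{eq}^{*})=c^{l}(\nabla_{eq})\cdot\textrm{td}^{-1}(\nabla_{eq})$ recalled at the start of this section, applied to $\nabla_1$. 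Thus the whole content is the $U_{01}$-component, where (using $c^{l}((\nabla_{eq})_0)=0$ once more in the cup product) one must prove
$$\textrm{ch}^{*}({\bigwedge}^{\bullet}(\nabla_{eq}^{*})_0,{\bigwedge}^{\bullet}(\nabla_{eq}^{*})_1)=c^{l}((\nabla_{eq})_0,(\nabla_{eq})_1)\cdot\textrm{td}^{-1}((\nabla_{eq})_1)-d_{eq}\tau_{01}.$$

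To prove this I would move to $\mathbb{R}\times U_{01}$, equipped with the interpolating equivariant connection $\tilde\nabla_{eq}=(1-t)\rho^{*}(\nabla_{eq})_0+t\rho^{*}(\nabla_{eq})_1$ for $\rho^{*}E$, and apply the same pointwise identity there: $\textrm{ch}^{*}({\bigwedge}^{\bullet}\tilde\nabla_{eq}^{*})=c^{l}(\tilde\nabla_{eq})\cdot\textrm{td}^{-1}(\tilde\nabla_{eq})$. Because $\rho^{*}(\nabla_{eq})_0$ is still $s$-trivial, $c^{l}(\rho^{*}(\nabla_{eq})_0)=0$, and the $p=1$ case of Proposition \ref{Bott's difference form} yields the transgressions $c^{l}(\tilde\nabla_{eq})=d_{eq}\,c^{l}(\rho^{*}(\nabla_{eq})_0,\tilde\nabla_{eq})$ and $\textrm{td}^{-1}(\tilde\nabla_{eq})=\rho^{*}\textrm{td}^{-1}((\nabla_{eq})_1)+d_{eq}\,\textrm{td}^{-1}(\rho^{*}(\nabla_{eq})_1,\tilde\nabla_{eq})$. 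Substituting both into the product and applying the Leibniz rule, using that characteristic forms are $d_{eq}$-closed and $d_{eq}^{2}=0$, I would rewrite $c^{l}(\tilde\nabla_{eq})\cdot\textrm{td}^{-1}(\tilde\nabla_{eq})$ as the sum of the two $d_{eq}$-exact forms $d_{eq}\big(c^{l}(\rho^{*}(\nabla_{eq})_0,\tilde\nabla_{eq})\cdot\rho^{*}\textrm{td}^{-1}((\nabla_{eq})_1)\big)$ and $d_{eq}\big(c^{l}(\rho^{*}(\nabla_{eq})_0,\tilde\nabla_{eq})\cdot d_{eq}\textrm{td}^{-1}(\rho^{*}(\nabla_{eq})_1,\tilde\nabla_{eq})\big)$.

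I would then apply the fiber integration $\rho'_{*}$ over $\Delta^{1}=[0,1]$: by the definition of the (virtual) equivariant Bott difference form it carries the left-hand side to $\textrm{ch}^{*}({\bigwedge}^{\bullet}(\nabla_{eq}^{*})_0,{\bigwedge}^{\bullet}(\nabla_{eq}^{*})_1)$, and on the right one invokes Proposition \ref{projection formula}(2) for the $1$-dimensional fiber, i.e. $\rho'_{*}d_{eq}=(\partial\rho')_{*}i^{*}-d_{eq}\rho'_{*}$. From the first summand the boundary operator extracts the value at $t=1$, namely $c^{l}((\nabla_{eq})_0,(\nabla_{eq})_1)\cdot\textrm{td}^{-1}((\nabla_{eq})_1)$ (the $t=0$ contribution vanishing since a Bott difference form with a repeated entry is zero), while $-d_{eq}\rho'_{*}\big(c^{l}(\rho^{*}(\nabla_{eq})_0,\tilde\nabla_{eq})\cdot\rho^{*}\textrm{td}^{-1}((\nabla_{eq})_1)\big)=-d_{eq}\big(\rho'_{*}c^{l}(\rho^{*}(\nabla_{eq})_0,\tilde\nabla_{eq})\cdot\textrm{td}^{-1}((\nabla_{eq})_1)\big)$ vanishes because $c^{l}(\rho^{*}(\nabla_{eq})_0,\tilde\nabla_{eq})$ is the fiber integral of the Chern form of the connection $(1-st)\rho^{*}(\nabla_{eq})_0+st\,\rho^{*}(\nabla_{eq})_1$, which depends on the single parameter $st$, hence has no $dt$-component and is killed by $\rho'_{*}$. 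From the second summand both boundary contributions vanish ($d_{eq}\textrm{td}^{-1}(\rho^{*}(\nabla_{eq})_1,\tilde\nabla_{eq})$ restricts to $0$ at $t=1$, and $c^{l}(\rho^{*}(\nabla_{eq})_0,\tilde\nabla_{eq})$ restricts to $0$ at $t=0$), leaving precisely $-d_{eq}\tau_{01}$ with $\tau_{01}=\rho'_{*}\big(c^{l}(\rho^{*}(\nabla_{eq})_0,\tilde\nabla_{eq})\cdot d_{eq}\textrm{td}^{-1}(\rho^{*}(\nabla_{eq})_1,\tilde\nabla_{eq})\big)$, the form in the statement. Collecting the three components then finishes the proof.

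I expect the main obstacle to be sign bookkeeping rather than any conceptual issue: the $(-1)^{r}$ of the cup product (harmless here, $r=2l$), the factor $(-1)^{l+1}$ in Proposition \ref{projection formula}(2) which becomes $+1$ for a $1$-dimensional fiber, the orientation convention for $\partial\Delta^{1}$ in the boundary term, and the sign of the relative differential $D_{eq}$ must all be tracked so as to land on $\tau=(0,0,\tau_{01})$ and not its negative. A minor further point is that $\textrm{td}^{-1}$ has mixed degree while $c^{l}$ and its transgression $c^{l}(\rho^{*}(\nabla_{eq})_0,\tilde\nabla_{eq})$ are of pure degree ($2l$ and $2l-1$), which keeps the Leibniz signs unambiguous but should be written out.
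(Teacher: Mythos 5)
Your argument is correct and is exactly the computation the paper intends: the paper states this theorem without proof (deferring to the non-equivariant case in Suwa), but the explicit form of $\tau_{01}$ in the statement encodes precisely your transgression argument, and your component-by-component reduction, the two $p=1$ difference-form identities, the Leibniz rewriting, and the Stokes/boundary analysis over $[0,1]$ (including the vanishing of $\rho'_{*}$ on the $dt$-free first summand) reproduce it faithfully. The only point worth making explicit is that the virtual difference form $\textrm{ch}^{*}({\bigwedge}^{\bullet}(\nabla_{eq}^{*})_{0},{\bigwedge}^{\bullet}(\nabla_{eq}^{*})_{1})$ must be taken, as the paper does in the paragraph preceding the theorem, to be $\rho'_{*}$ of the Chern character form of the family ${\bigwedge}^{\bullet}({\tilde{\nabla}}^{*}_{eq})$ induced from the single interpolated connection ${\tilde{\nabla}}_{eq}$, which is the convention your computation uses.
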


The following corollary follows immediately from this.
\begin{cor}
We have
$$\textrm{ch}_{S}^{*}(\lambda_{E^{*}},s)_{eq}=c_{S}^{l}(E.s)\cdot \textrm{td}^{-1}(E)_{eq}$$
\end{cor}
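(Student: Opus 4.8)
The plan is to deduce the corollary from the preceding theorem by passing from its cochain identity to an equality of cohomology classes in $H_{G}^{2i}(M,M\setminus S)$; all the analytic substance is already packaged in that theorem, so what remains is to match the cocycles on both sides with the localized classes and to discard the exact term $D_{eq}\tau$.

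First I would record which classes the three relevant cochains represent. Because the Koszul family $\bigwedge^{\bullet}(\nabla_{eq}^{*})_{0}$ is compatible with the Koszul complex of $(E,s)$ on $U_{0}=M\setminus S$, one has $\textrm{ch}^{*}(\bigwedge^{\bullet}(\nabla_{eq}^{*})_{0})=0$ there; hence the cocycle $\textrm{ch}^{*}(\bigwedge^{\bullet}(\nabla_{eq}^{*})_{\star})$ has vanishing $0$-component, lies in the relative subcomplex $\Omega_{G}^{*}(\mathcal{U},U_{0})$, and by construction represents $\textrm{ch}_{S}^{*}(\lambda_{E^{*}},s)_{eq}$. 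Likewise, since $(\nabla_{eq})_{0}$ is $s$-trivial, Lemma~\ref{vanishing theorem by frame} (applied with $r=1$) gives $c^{l}((\nabla_{eq})_{0})\equiv 0$ on $U_{0}$, so $c^{l}((\nabla_{eq})_{\star})=(0,\,c^{l}((\nabla_{eq})_{1}),\,c^{l}((\nabla_{eq})_{0},(\nabla_{eq})_{1}))$ also lies in $\Omega_{G}^{*}(\mathcal{U},U_{0})$ and represents the localized top Chern class $c_{S}^{l}(E,s)_{eq}$; finally $\textrm{td}^{-1}((\nabla_{eq})_{\star})\in\Omega_{G}^{*}(\mathcal{U})$ represents $\textrm{td}^{-1}(E)_{eq}$ under the isomorphism $H_{G}^{*}(\mathcal{U})\simeq H_{G}^{*}(M)$.

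Next I would use compatibility of the cup product with products in cohomology. The Leibniz rule $D_{eq}(\xi\smile\eta)=D_{eq}\xi\smile\eta+(-1)^{r}\xi\smile D_{eq}\eta$ shows that a cup product of two cocycles is again a cocycle whose class depends only on the two input classes, while the relative pairing $\Omega_{G}^{*}(\mathcal{U},U_{0})\times\Omega_{G}^{*}(U_{1})\to\Omega_{G}^{*}(\mathcal{U},U_{0})$ induces on cohomology the module pairing $H_{G}^{*}(M,M\setminus S)\times H_{G}^{*}(M)\to H_{G}^{*}(M,M\setminus S)$. Hence $c^{l}((\nabla_{eq})_{\star})\smile\textrm{td}^{-1}((\nabla_{eq})_{\star})$ is a relative cocycle representing $c_{S}^{l}(E,s)_{eq}\cdot\textrm{td}^{-1}(E)_{eq}$. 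Taking cohomology classes of both sides of the theorem's identity and using $[D_{eq}\tau]=0$ then yields $\textrm{ch}_{S}^{*}(\lambda_{E^{*}},s)_{eq}=c_{S}^{l}(E,s)_{eq}\cdot\textrm{td}^{-1}(E)_{eq}$, which is the corollary.

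The step I expect to need the most care is not analytic but organizational: tracking, at the cochain level, which object lies in the full complex $\Omega_{G}^{*}(\mathcal{U})$ and which in the relative subcomplex $\Omega_{G}^{*}(\mathcal{U},U_{0})$, and in particular verifying that the $0$-components of $\textrm{ch}^{*}(\bigwedge^{\bullet}(\nabla_{eq}^{*})_{\star})$ and of $c^{l}((\nabla_{eq})_{\star})$ genuinely vanish, so that the conclusion is an identity of classes in $H_{G}^{2i}(M,M\setminus S)$ and not merely in $H_{G}^{2i}(M)$. The real work sits in the theorem, which I would invoke as given; for orientation, it is proved exactly as in the non-equivariant case, by applying the pointwise Riemann--Roch identity $\sum_{i}(-1)^{i}\textrm{ch}^{*}(\bigwedge^{i}\nabla_{eq}^{*})=c^{l}(\nabla_{eq})\cdot\textrm{td}^{-1}(\nabla_{eq})$ componentwise on $U_{0}$ and $U_{1}$ and absorbing the discrepancy in the $01$-component into $D_{eq}\tau$ through a transgression over $[0,1]\times U_{01}$ with $\rho'_{*}$; I would not reproduce that computation here.
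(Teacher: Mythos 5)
Your argument is correct and matches the paper, which simply asserts that the corollary ``follows immediately'' from the preceding theorem: one passes to cohomology, the exact term $D_{eq}\tau$ vanishes, and the three cocycles are identified with $\textrm{ch}_{S}^{*}(\lambda_{E^{*}},s)_{eq}$, $c_{S}^{l}(E,s)_{eq}$ and $\textrm{td}^{-1}(E)_{eq}$ exactly as you describe. Your extra care about which cochains lie in the relative subcomplex $\Omega_{G}^{*}(\mathcal{U},U_{0})$ is the right point to check and is consistent with the paper's setup.
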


Also, as an applications of the above, we may get the equivariant universal localized Riemann-Roch theorem for embeddings by using the result in the previous section. Let $\pi:E\to M$ be a $G$-equivariant vector bundle of rank $l$. We have the $G$-equivariant Thom class $\varPsi_{eq}^{E}$ and the Thom isomorphism 
$$T_{E}:H^{*}_{G}(M)\to H^{*+2i}_{G}(E,E\setminus \Sigma)$$
which is given by $T_{E}(\alpha)=\varPsi_{eq}^{E}\cdot \pi^{*}\alpha$. Since $\varPsi_{eq}^{E}=c_{\Sigma}^{l}({\pi^{*}E,s_{\Delta}})_{eq}$, applying the above Corollary to $\pi^{*}E$ and $s_{\Delta}$, we have :

\begin{thm}[Equivariant universal localized RR for embeddings]\label{RR}
\begin{eqnarray*}
\textrm{ch}_{\Sigma}^{*}(\lambda_{\pi^{*}E^{*}},s_{\Delta})_{eq}&=&\varPsi_{eq}^{E}\cdot \textrm{td}^{-1}(\pi^{*}E)_{eq} \\
&=&T_{E}(\textrm{td}^{-1}(E)_{eq})
\end{eqnarray*}
\end{thm}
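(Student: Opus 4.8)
The plan is to deduce Theorem~\ref{RR} by simply combining the two previously established results: Theorem~\ref{equivariant Thom class}, which identifies the equivariant Thom class with the localized top Chern class $\varPsi_{eq}^{E}=c_{\Sigma}^{l}(\pi^{*}E,s_{\Delta})_{eq}$, and the preceding Corollary, which is the localized equivariant Riemann--Roch identity
$$\textrm{ch}_{S}^{*}(\lambda_{E^{*}},s)_{eq}=c_{S}^{l}(E,s)_{eq}\cdot \textrm{td}^{-1}(E)_{eq}$$
for an arbitrary $G$-equivariant bundle $E$ with a $G$-invariant section $s$ whose zero set is $S$. The idea is to apply this Corollary \emph{not} to $E\to M$ itself, but to the pulled-back bundle $\pi^{*}E\to E$ together with the diagonal section $s_{\Delta}\colon E\to\pi^{*}E$, whose zero locus is exactly the zero section $\Sigma\simeq M$. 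First I would check that $s_{\Delta}$ is genuinely a $G$-invariant section of $\pi^{*}E$ with $G$-invariant zero set $\Sigma$, so that the hypotheses of the Corollary are met in this new setting; this is immediate from the definitions in Section~2.3 (where $\pi^{*}E$ is constructed with the diagonal $G$-action and $s_{\Delta}$ is shown $G$-invariant). Then the Corollary applied to $(\pi^{*}E,s_{\Delta})$ reads
$$\textrm{ch}_{\Sigma}^{*}(\lambda_{\pi^{*}E^{*}},s_{\Delta})_{eq}=c_{\Sigma}^{l}(\pi^{*}E,s_{\Delta})_{eq}\cdot \textrm{td}^{-1}(\pi^{*}E)_{eq}.$$

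Next I would substitute $c_{\Sigma}^{l}(\pi^{*}E,s_{\Delta})_{eq}=\varPsi_{eq}^{E}$ from Theorem~\ref{equivariant Thom class} into the right-hand side, which gives the first displayed equality of the theorem,
$$\textrm{ch}_{\Sigma}^{*}(\lambda_{\pi^{*}E^{*}},s_{\Delta})_{eq}=\varPsi_{eq}^{E}\cdot \textrm{td}^{-1}(\pi^{*}E)_{eq}.$$
For the second equality I would use the Thom isomorphism $T_{E}\colon H^{*}_{G}(M)\to H^{*+2l}_{G}(E,E\setminus\Sigma)$, which by the discussion preceding Theorem~\ref{equivariant Thom form} is given by $T_{E}(\alpha)=\varPsi_{eq}^{E}\smile\pi^{*}\alpha$. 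Applying this with $\alpha=\textrm{td}^{-1}(E)_{eq}$ and noting the functoriality of the equivariant Todd class under pull-back, $\pi^{*}\textrm{td}^{-1}(E)_{eq}=\textrm{td}^{-1}(\pi^{*}E)_{eq}$ (which holds because equivariant characteristic forms are natural with respect to pull-backs, as recorded in Section~2.1), we get $\varPsi_{eq}^{E}\cdot\textrm{td}^{-1}(\pi^{*}E)_{eq}=\varPsi_{eq}^{E}\smile\pi^{*}\textrm{td}^{-1}(E)_{eq}=T_{E}(\textrm{td}^{-1}(E)_{eq})$, completing the chain.

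The one genuine point requiring care—the ``main obstacle'' such as it is—is the compatibility of the localized constructions with pull-back: one must be sure that the localized class $\textrm{ch}_{S}^{*}(\lambda_{E^{*}},s)_{eq}$ and its Riemann--Roch partner behave well when $E$ is replaced by $\pi^{*}E$ and $S$ by $\Sigma$, i.e. that the Corollary is being invoked on a legitimate instance and that all products are taken in the correct relative cohomology groups $H^{*}_{G}(E,E\setminus\Sigma)$. Concretely, one checks that the $s_{\Delta}$-trivial equivariant connection used to define $c_{\Sigma}^{l}(\pi^{*}E,s_{\Delta})_{eq}$ can be taken compatible with the Koszul complex of $(\pi^{*}E,s_{\Delta})$, so that the cocycle-level identity underlying the Corollary is valid here; this is exactly parallel to the non-equivariant argument in \cite{brasselet2009vector}\cite{suwa2000characteristic}, and no new phenomenon arises in the equivariant setting since all the ingredients (fiber integration, Bott difference forms, the Chern--Weil map) have already been shown to be equivariantly compatible in Sections~1--2. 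Everything else is formal manipulation in $H_{G}^{*}(\mathcal{W},W_{0})$.
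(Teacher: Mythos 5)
Your proposal is correct and follows essentially the same route as the paper: the paper likewise obtains Theorem~\ref{RR} by applying the preceding Corollary to the pair $(\pi^{*}E,s_{\Delta})$, substituting $\varPsi_{eq}^{E}=c_{\Sigma}^{l}(\pi^{*}E,s_{\Delta})_{eq}$ from Theorem~\ref{equivariant Thom class}, and reading off the second equality from $T_{E}(\alpha)=\varPsi_{eq}^{E}\cdot\pi^{*}\alpha$. Your additional remarks on verifying the hypotheses for $(\pi^{*}E,s_{\Delta})$ and on naturality of the Todd class under pull-back only make explicit what the paper leaves implicit.
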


\newpage

\end{document}